\newcommand*\fullref[3][\relax]{%
  \ifdefined\hyperref%
    {\hyperref[#3]{#2\penalty 200\ \ref*{#3}#1}}%
  \else%
    {#2\penalty 200\ \relax\ref{#3}#1}%
  \fi%
}
\tikzset{
  normalarrow/.style={line width=.6pt},
}
\tikzset{
  normalarrowlabel/.style={
    auto,
    inner sep=.5mm,
    outer sep=0mm,
    font=\footnotesize,
  },
  tinyarrowlabel/.style={
    auto
    inner sep=.2mm,
    outer sep=0mm,
    font=\tiny,
  },
  arrowlabel/.style={
    normalarrowlabel
  },
  marrowlabel/.style={
    normalarrowlabel,
  },
  crystaledges/.style={
    f1/.style={
      ->,
      normalarrow,
      labelled/.style={every to/.style={edge node={node[marrowlabel] {$1$}}}},
      coloured/.style={draw=red},
    },
    f2/.style={
      ->,
      normalarrow,
      labelled/.style={every to/.style={edge node={node[marrowlabel] {$2$}}}},
      coloured/.style={draw=blue},
    },
    f3/.style={
      ->,
      normalarrow,
      labelled/.style={every to/.style={edge node={node[marrowlabel] {$3$}}}},
      coloured/.style={draw=green!50!black},
    },
    f4/.style={
      ->,
      normalarrow,
      labelled/.style={every to/.style={edge node={node[marrowlabel] {$4$}}}},
      coloured/.style={draw=brown},
    },
    f5/.style={
      ->,
      normalarrow,
      labelled/.style={every to/.style={edge node={node[arrowlabel] {$5$}}}},
      coloured/.style={draw=cyan},
    },
    f6/.style={
      ->,
      normalarrow,
      labelled/.style={every to/.style={edge node={node[arrowlabel] {$6$}}}},
      coloured/.style={draw=violet},
    },
    f7/.style={
      ->,
      normalarrow,
      labelled/.style={every to/.style={edge node={node[arrowlabel] {$7$}}}},
      coloured/.style={draw=brown!50!black},
    },
    f8/.style={
      ->,
      normalarrow,
      labelled/.style={every to/.style={edge node={node[arrowlabel] {$7$}}}},
      coloured/.style={draw=magenta},
    },
    f9/.style={
      ->,
      normalarrow,
      labelled/.style={every to/.style={edge node={node[arrowlabel] {$7$}}}},
      coloured/.style={draw=brown!50!black},
    },
    f10/.style={
      ->,
      normalarrow,
      labelled/.style={every to/.style={edge node={node[arrowlabel] {$7$}}}},
      coloured/.style={draw=lime},
    },
    f11/.style={
      ->,
      normalarrow,
      labelled/.style={every to/.style={edge node={node[arrowlabel] {$7$}}}},
      coloured/.style={draw=olive},
    },
    f12/.style={
      ->,
      normalarrow,
      labelled/.style={every to/.style={edge node={node[arrowlabel] {$7$}}}},
      coloured/.style={draw=pink},
    },
    fi/.style={
      ->,
      normalarrow,
      labelled/.style={every to/.style={edge node={node[marrowlabel] {$i$}}}},
      coloured/.style={draw=gray},
    },
    fn2/.style={
      ->,
      normalarrow,
      labelled/.style={every to/.style={edge node={node[marrowlabel] {$n{-}2$}}}},
      coloured/.style={draw=brown},
    },
    fn1/.style={
      ->,
      normalarrow,
      labelled/.style={every to/.style={edge node={node[arrowlabel] {$n{-}1$}}}},
      coloured/.style={draw=cyan},
    },
    fn/.style={
      ->,
      normalarrow,
      labelled/.style={every to/.style={edge node={node[arrowlabel] {$n$}}}},
      coloured/.style={draw=violet},
    },
    df1/.style={
      f1,
      densely dotted,
    },
    df2/.style={
      f2,
      densely dotted,
    },
    df3/.style={
      f3,
      densely dotted,
    },
    dfi/.style={
      fi,
      densely dotted,
    },
    dfn2/.style={
      fn2,
      densely dotted,
    },
    dfn1/.style={
      fn1,
      densely dotted,
    },
    dfn/.style={
      fn,
      densely dotted,
    },
  },
  labelledcrystaledges/.style={
    crystaledges,
    f1/.append style={labelled},
    f2/.append style={labelled},
    f3/.append style={labelled},
    f4/.append style={labelled},
    f5/.append style={labelled},
    f6/.append style={labelled},
    f7/.append style={labelled},
    f8/.append style={labelled},
    f9/.append style={labelled},
    f10/.append style={labelled},
    f11/.append style={labelled},
    f12/.append style={labelled},
    fi/.append style={labelled},
    fn2/.append style={labelled},
    fn1/.append style={labelled},
    fn/.append style={labelled},
  },
  colouredcrystaledges/.style={
    crystaledges,
    f1/.append style={coloured},
    f2/.append style={coloured},
    f3/.append style={coloured},
    f4/.append style={coloured},
    f5/.append style={coloured},
    f6/.append style={coloured},
    f7/.append style={coloured},
    f8/.append style={coloured},
    f9/.append style={coloured},
    f10/.append style={coloured},
    f11/.append style={coloured},
    f12/.append style={coloured},
    fi/.append style={coloured},
    fn2/.append style={coloured},
    fn1/.append style={coloured},
    fn/.append style={coloured},
  },
  labelledcolouredcrystaledges/.style={
    crystaledges,
    f1/.append style={labelled,coloured},
    f2/.append style={labelled,coloured},
    f3/.append style={labelled,coloured},
    f4/.append style={labelled,coloured},
    f5/.append style={labelled,coloured},
    f6/.append style={labelled,coloured},
    f7/.append style={labelled,coloured},
    f8/.append style={labelled,coloured},
    f9/.append style={labelled,coloured},
    f10/.append style={labelled,coloured},
    f11/.append style={labelled,coloured},
    f12/.append style={labelled,coloured},
    fi/.append style={labelled,coloured},
    fn2/.append style={labelled,coloured},
    fn1/.append style={labelled,coloured},
    fn/.append style={labelled,coloured},
  },
  crystalvertex/.style={
    font=\small,
    inner sep=.5mm,
    outer sep=0mm,
  },
  smallcrystalvertex/.style={
    crystalvertex,
    font=\scriptsize,
  },
  bigcrystalvertex/.style={
    crystalvertex,
    inner sep=1mm,
    font=\normalsize,
  },
  crystal/.style={
    x=10mm,
    y=10mm,
    every node/.style={crystalvertex},
    labelledcrystaledges,
  },
  bigcrystal/.style={
    x=15mm,
    y=15mm,
    every node/.style={bigcrystalvertex},
    labelledcrystaledges,
  },
  smallcrystal/.style={
    x=7mm,
    y=7mm,
    every node/.style={smallcrystalvertex},
    colouredcrystaledges,
  },
}
\newcommand*\cedge[2][]{%
  \ifstrempty{#1}{
    \mathrel{%
    \begin{tikzpicture}[crystal,labelledcolouredcrystaledges,baseline=(start.base)]
      \node[inner sep=0mm] (start) at (0,0) {$\mathstrut$};
      \node[inner sep=0mm] (stop) at (1,0) {$\mathstrut$};
      \draw[f#2] (start) to (stop);
    \end{tikzpicture}%
    }%
  }{%
    \mathrel{%
    \begin{tikzpicture}[crystal,labelledcolouredcrystaledges,baseline=(start.base)]
      \node[inner sep=0mm] (start) at (0,0) {$\mathstrut$};
      \node[inner sep=0mm] (stop) at (1,0) {$\mathstrut$};
      \draw[f#2,every to/.style={edge node={node[marrowlabel] {$#1$}}}] (start) to (stop);
    \end{tikzpicture}%
    }%
  }%
}
\tikzset{
  pretableaumatrix/.style={
    ampersand replacement=\&,
    matrix of math nodes,
    outer sep=1mm,
    inner sep=0mm,
    anchor=center,
    row sep={between borders,-\pgflinewidth},
    column sep={between borders,-\pgflinewidth},
    dottedentry/.style={densely dotted},
    dashedentry/.style={densely dashed},
    shadedentry/.style={fill=lightgray},
    darkshadedentry/.style={fill=gray},
    spaceentry/.style={draw=none,execute at begin node=\null},
    labelentry/.style={draw=none,execute at begin node=\null,font=\footnotesize},
  },
  pretableaunode/.style={
    font=\small,
    sharp corners,
    rectangle,
    anchor=base,
    text height=3.75mm,
    text depth=1.25mm,
    minimum height=5mm,
    minimum width=5mm,
    inner sep=0mm,
    outer sep=0mm,
    doublewidth/.style={minimum width=10mm},
    footnotesize/.style={font=\footnotesize},
    scriptsize/.style={font=\scriptsize},
  },
  tableaunode/.style={
    pretableaunode,
  },
  medtableaunode/.style={
    pretableaunode,
    font=\footnotesize,
    text height=2.75mm,
    text depth=.75mm,
    minimum height=3.5mm,
    minimum width=3.5mm
  },
  smalltableaunode/.style={
    pretableaunode,
    font=\scriptsize,
    text height=1.85mm,
    text depth=.15mm,
    minimum height=2.5mm,
    minimum width=2.5mm,
  },
  tinytableaunode/.style={
    pretableaunode,
    font=\tiny,
    text height=1.25mm,
    text depth=.15mm,
    minimum height=1.75mm,
    minimum width=1.75mm
  },
  tableaumatrix/.style={
    pretableaumatrix,
    every node/.append style={
      tableaunode,
      draw=gray,
    },
  },
  medtableaumatrix/.style={
    pretableaumatrix,
    every node/.append style={
      medtableaunode,
      draw=gray,
    },
  },
  smalltableaumatrix/.style={
    pretableaumatrix,
    every node/.append style={
      smalltableaunode,
      draw=gray,
    },
  },
  tinytableaumatrix/.style={
    pretableaumatrix,
    every node/.append style={
      tinytableaunode,
      draw=gray,
    },
  },
  tableau/.style={
    baseline=-1.25mm,
    every matrix/.style={tableaumatrix},
  },
  medtableau/.style={
    baseline=-1.25mm,
    every matrix/.style={medtableaumatrix},
  },
  smalltableau/.style={
    baseline=-1.25mm,
    every matrix/.style={smalltableaumatrix},
  },
  preshapetableaumatrix/.style={
    pretableaumatrix,
    nodes in empty cells,
    every node/.append style={
      draw=black,
      anchor=base,
      inner sep=0mm,
      outer sep=0mm,
    },
  },
  shapetableaumatrix/.style={
    preshapetableaumatrix,
    every node/.append style={
      tableaunode,
      draw=gray,
    },
  },
  medshapetableaumatrix/.style={
    preshapetableaumatrix,
    every node/.append style={
      medtableaunode,
      draw=gray,
    },
  },
  smallshapetableaumatrix/.style={
    preshapetableaumatrix,
    every node/.style={
      smalltableaunode,
      draw=gray,
    },
  },
  tinyshapetableaumatrix/.style={
    preshapetableaumatrix,
    every node/.style={
      tinytableaunode,
      draw=gray,
    },
  },
  shapetableau/.style={
    baseline=-1.25mm,
    every matrix/.style={shapetableaumatrix},
  },
  medshapetableau/.style={
    baseline=-1.25mm,
    every matrix/.style={medshapetableaumatrix},
  },
  smallshapetableau/.style={
    baseline=-1.25mm,
    every matrix/.style={smallshapetableaumatrix},
  },
  tinyshapetableau/.style={
    baseline=-1.25mm,
    every matrix/.style={tinyshapetableaumatrix},
  },
  topalign/.style={
    every matrix/.append style={name=maintableau,anchor=maintableau-1-1.base},
    baseline,
  },
  tableaulabel/.style={
    node font=\scriptsize,
    text=gray,
    execute at begin node=$\bgroup,
    execute at end node=\egroup$
  },
  tableaulabelsf/.style={
    node font=\scriptsize,
    text=gray,
    execute at begin node=$\mathsf\bgroup,
    execute at end node=\egroup$
  },
}
\newcommand*\tableau[2][]{\tikz[tableau,#1]\matrix{#2};}
\newcommand*\smalltableau[2][]{\tikz[smalltableau,#1]\matrix{#2};}
\NewDocumentCommand{\qellipsis}{ s O{} m }{%
  \IfBooleanTF{#1}{}{\unskip\space}%
  \textbracks{\ldots}#2%
}
\newcommand*{\textbracks}[1]{\textup{[}#1\textup{]}}
\newcommand*{\textangbracks}[1]{\textup{<}\kern 0.06667em\relax #1\kern 0.06667em\textup{>}}
\newcommand*\texorpdfformat[3]{%
  \texorpdfstring{#1{#3}}{#2{#3}}%
}
  \newcommand*{\enquote}[1]{\texorpdfstring{`#1`}{'#1'}}
\newcommand*\defterm{\texorpdfformat{\emph}{\relax}}
\NewDocumentCommand{\translitdet}{ s m }{%
  \smash{\textup{\textsuperscript{#2}}}%
  \IfBooleanF{#1}{\kern -.1em}%
}
\newcommand\chyph{\penalty\@M-\hskip\z@skip}
\newcommand\cendash{\penalty\@M--\hskip\z@skip}
\newcommand\cslash{\penalty\@M/\hskip\z@skip}
\newcommand*\claptowidth[2]{%
    \sbox0{#2}%
    \kern 0.5\wd0\clap{#1}\kern 0.5\wd0\relax%
}
\theoremstyle{definition}
\newtheorem{definition}{Definition}[section]
\newtheorem{example}[definition]{Example}
\theoremstyle{plain}
\newtheorem{corollary}[definition]{Corollary}
\newtheorem{lemma}[definition]{Lemma}
\newtheorem{proposition}[definition]{Proposition}
\newtheorem{theorem}[definition]{Theorem}
\numberwithin{equation}{section}
\DeclarePairedDelimiter{\abs}{\lvert}{\rvert}
\DeclarePairedDelimiter{\floor}{\lfloor}{\rfloor}
\DeclarePairedDelimiter{\parens}{\lparen}{\rparen}
\DeclarePairedDelimiter{\set}{\{}{\}}
\DeclarePairedDelimiterX{\gset}[2]{\{}{\}}{\,#1:#2\,}
\newcommand*\powerset{\@ifstar\@powersetnoparens\@powersetparens}
\newcommand*\@powersetparens[2][]{\pset\parens[#1]{#2}}
\newcommand*\@powersetnoparens[1]{\pset{#1}}
  \newcommand*\nset{\BbbN}
  \newcommand*\pset{\BbbP}
  \newcommand*\nset{\mathbb{N}}
  \newcommand*\pset{\mathbb{P}}
\newcommand*{\biggg}{\bBigg@{4}}
\newcommand*{\Biggg}{\bBigg@{5}}
\newcommand*{\sizeddelimiter}[2]{\bBigg@{#1}#2}
\newcommand*{\sizedsurd}[2][]{%
  {\@mathmeasure\z@{\nulldelimiterspace\z@}%
     {\sqrt[#1]{\vcenter to #2\big@size{}}}%
     \box\z@}%
 }
\newcommand{\subalign}[1]{%
  \vcenter{%
    \Let@ \restore@math@cr \default@tag
    \baselineskip\Umathstacknumup\scriptstyle
    \advance\baselineskip\Umathfractiondenomdown\scriptstyle
    \lineskip=3\Umathfractiondenomvgap\scriptstyle
    \lineskiplimit\lineskip
    \ialign{\hfil$\m@th\scriptstyle##$&$\m@th\scriptstyle{}##$\hfil\crcr
      #1\crcr
    }%
  }%
}
\newcommand{\subalign}[1]{%
  \vcenter{%
    \Let@ \restore@math@cr \default@tag
    \baselineskip\fontdimen10 \scriptfont\tw@
    \advance\baselineskip\fontdimen12 \scriptfont\tw@
    \lineskip\thr@@\fontdimen8 \scriptfont\thr@@
    \lineskiplimit\lineskip
    \ialign{\hfil$\m@th\scriptstyle##$&$\m@th\scriptstyle{}##$\hfil\crcr
      #1\crcr
    }%
  }%
}
\newcommand*{\emptyword}{\varepsilon}
\DeclarePairedDelimiterX{\pres}[2]{\langle}{\rangle}{#1\,\delimsize\vert\,\mathopen{}#2}
\newcommand*\mathclaptowidth[2]{%
    \sbox0{$#2$}%
    \kern 0.5\wd0\mathclap{#1}\kern 0.5\wd0\relax%
}
\newcommand*{\aA}{\mathcal{A}}
\newcommand*{\plen}[2][]{\ell\parens[#1]{#2}}
\newcommand*{\pwt}[2][]{\abs[#1]{#2}}
\newcommand*{\clen}[2][]{\ell\parens[#1]{#2}}
\newcommand*{\cwt}[2][]{\abs[#1]{#2}}
\newcommand*{\evlit}{{\mathrm{ev}}}
\newcommand*{\ev}[2][]{\evlit\parens[#1]{#2}}
\newcommand*{\plac}{{\mathsf{plac}}}
\newcommand*{\hypo}{{\mathsf{hypo}}}
\newcommand*{\placcong}{\equiv_\plac}
\newcommand*{\hypocong}{\equiv_\hypo}
\newcommand*{\colreading}[2][]{\mathrm{C}\parens[#1]{#2}}
\newcommand*{\plit}{\mathrm{P}}
\newcommand*{\pplac}[2][]{\plit_{\plac}\parens[#1]{#2}}
\newcommand*{\phypo}[2][]{\plit_{\hypo}\parens[#1]{#2}}
\newcommand*\e{\ddot{e}}
\newcommand*\f{\ddot{f}}
\newcommand*\ke{\tilde{e}}
\newcommand*\kf{\tilde{f}}
\newcommand*\kecount{\tilde\epsilon}
\newcommand*\kfcount{\tilde\phi}
\tikzset{
  pickmatrix/.style={
    pretableaumatrix,
    every node/.style={
      pretableaunode,
      text=gray,
    },
    drawnode/.style={
      draw=gray,
      text=black,
    },
  },
}
\DeclareRobustCommand*{\onmathaxis}[1]{%
  \ensuremath{{
    \sbox0{$\vcenter{}$}%
    \raisebox{\ht0}{$\m@th #1$}%
  }}%
}
\newcommand*\pickqrtlit{\mathfrak{r}}
\newcommand*\pickqrt[2]{\pickqrtlit\parens{#1,#2}}
\newcommand*\genqalit{\mathfrak{a}}
\newcommand*\genqa[1]{\genqalit\parens{#1}}
\newcommand*\setqa{\mathcal{QA}}
\newcommand*\tc{\ddot{c}}
\newcommand*\td{\ddot{d}}
\newcommand*\qrt[1]{\mathsf{QRT}\parens{#1}}
\newcommand*\yt[1]{\mathsf{YT}\parens{#1}}
\newcommand*\syt[1]{\mathsf{SYT}\parens{#1}}
\newcommand*\dc[1]{\mathsf{DesComp}\parens{#1}}
\newcommand*\Sym{\mathsf{Sym}}
\newcommand*\QSym{\mathsf{QSym}}
\newcommand*\Skel{\mathrm{Skel}}
\begin{document}

\title[Structure of quasi-crystal graphs]{Structure of quasi-crystal graphs and applications to the combinatorics of quasi-symmetric functions}

\author[A.J. Cain]{Alan J. Cain}
\address[A.J. Cain]{%
Center for Mathematics and Applications (NOVA Math)\\
NOVA School of Science and Technology\\
NOVA University of Lisbon\\
2829--516 Caparica\\
Portugal
}
\email{%
a.cain@fct.unl.pt
}
\thanks{This work is funded by national funds through the FCT – Fundação para a Ciência e a Tecnologia, I.P., under the scope of the projects UIDB/00297/2020 (https://doi.org/10.54499/UIDB/00297/2020) and UIDP/00297/2020 (https://doi.org/10.54499/UIDP/00297/2020) (Center for Mathematics and Applications)}

\author[A. Malheiro]{Ant\'onio Malheiro}
\address[A. Malheiro]{%
Center for Mathematics and Applications (NOVA Math) / Department of Mathematics \\
NOVA School of Science and Technology\\
NOVA University of Lisbon\\
2829--516 Caparica\\
Portugal
}
\email{%
ajm@fct.unl.pt
}

\author[F. Rodrigues]{F\'atima Rodrigues}
\address[F. Rodrigues]{%
Center for Mathematics and Applications (NOVA Math) / Department of Mathematics \\
NOVA School of Science and Technology\\
NOVA University of Lisbon\\
2829--516 Caparica\\
Portugal
}
\email{%
mfsr@fct.unl.pt
}

\author[I. Rodrigues]{Inês Rodrigues}
\address[I. Rodrigues]{%
Center for Mathematics and Applications (NOVA Math)\\
NOVA School of Science and Technology\\
NOVA University of Lisbon\\
2829--516 Caparica\\
Portugal
}
\email{%
ima.rodrigues@fct.unl.pt
}

\begin{abstract}
  Crystal graphs are powerful combinatorial tools for working with the plactic monoid and symmetric functions.
  Quasi-crystal graphs are an analogous concept for the hypoplactic monoid and quasi-symmetric functions. This paper
  makes a combinatorial study of these objects.
  We explain a previously-observed isomorphism of components of the quasi-crystal
  graph, and provide an explicit description using a new combinatorial structure called a quasi-array. Then two
  conjectures of Maas-Gariépy on the interaction of fundamental quasi-symmetric functions and Schur functions and on the
  arrangement of quasi-crystal components within crystal components are answered, the former positively, the latter
  negatively.
\end{abstract}

\maketitle

\section{Introduction}

Crystals, which were first introduced by Kashiwara in the early 1990s
\cite{kashiwara_oncrystalbases,kashiwara_crystalizing}, have their origin in the representation theory of quantum groups
\cite{hong_quantumgroups}, but the notion can be applied to provide a better understanding of older concepts in
combinatorics. We can identify a crystal with its crystal graph, a weighted, edge-labelled directed graph, whose edges are given by the Kashiwara operators.

As an example of crystals having applications to classical combinatorics, we highlight the case of the plactic monoid \cite{LS81}. Its elements can be viewed as semistandard Young tableaux and it is connected to the Schensted insertion algorithm: plactic classes correspond to words having the same insertion tableau, while dual equivalence classes (also called coplactic) correspond to words having the same recording tableau. The plactic monoid is associated to a crystal graph, whose connected components determine a dual equivalence class, while the isomorphisms (as weighted, labelled directed graphs) between components correspond to plactic classes. Moreover, the characters of these connected components are known to be the classical Schur functions $s_{\lambda}$.

Crystal graphs thus provide powerful combinatorial tools to work with both the classical plactic monoid and plactic
monoids corresponding to other simple Lie algebras. In particular, used in a purely combinatorial way, independently of their origin in representation theory, they allow the construction of finite complete rewriting systems and biautomatic
structures for these monoids \cite{cgm_crystal}.

This success led the first and second authors of the present paper to
seek analogous structures for other `plactic-like' monoids whose elements can be viewed as combinatorial objects of some
type, notably the hypoplactic monoid \cite{KT97, novelli_hypoplactic}, whose elements can be viewed as quasi-ribbon tableaux \cite{cm_hypoplactic}.
Quasi-ribbon tableaux have a role in the theory of quasi-symmetric functions analogous to the role of Young tableaux in the theory of symmetric functions \cite{luoto_quasisymmetric}, while also having an analogous insertion algorithm, the Krob--Thibon insertion \cite{KT97}.

This analogous `quasi-crystal graph' corresponds to the
action on words of `quasi-Kashiwara' operators, which are defined combinatorially, without reference to representations
of any underlying algebra. Analogously to the case of the plactic monoid,the connected components of the quasi-crystal graph correspond to classes of words with the same recording tableau under the Krob--Thibon insertion, and the isomorphisms (as weighted, labelled directed graphs) between components correspond to hypoplactic classes. The characters of these components are known to be fundamental quasi-symmetric functions $F_{\alpha}$ \cite{gessel_multipartite}.

Although detached completely from representation theory, quasi-crystals have turned out to be useful tools for understanding the
structure and combinatorics of the hypoplactic monoid, and the related sylvester and Baxter monoids (the monoids of
binary search trees and pairs of twin binary search trees, the latter of which is connected to the theory of Baxter
permutations) \cite{cm_sylvester,giraudo_baxter2}.

Certain components of the quasi-crystal graph for the hypoplactic monoid seem to be isomorphic
as (unweighted, unlabelled) directed graphs, as illustrated, for example in \fullref{Figure}{fig:crystal_quasicrystal} or in \cite[Figure 6]{cgm_quasicrystals}. This paper shows when these isomorphisms arise and describes them
explicitly. The existence of the isomorphisms, though not their explicit description, can be deduced from a result of
Maas-Gariépy \cite[Theorem~2]{maasgariepy_quasicrystal}, which the authors came across after starting this work.

Maas-Gariépy independently introduced an equivalent notion of quasi-crystal by restricting the action of Kashiwara
operators on semistandard Young tableaux, in order to investigate the relationship between symmetric and quasi-symmetric
functions. In contrast, the first and second authors had defined the notion in an algebraic context, in order to
understand better the hypoplactic monoid. Thus, while the notions of quasi-crystals are equivalent and there are
conceptual parallels, the two approaches barely overlap.

Maas-Gariépy's paper also contains a number of important conjectures, of which we were able to resolve two: one
positively, one negatively. The true conjecture asserts that the fundamental quasi-symmetric function $F_\alpha$ appears
in the expansion of the Schur function $s_\lambda$ in fundamental quasi-symmetric functions, where $\lambda$ is the partition obtained by re-arranging the composition $\alpha$
into non-decreasing order. This conjecture can be converted into a statement about how connected components of the
quasi-crystal graph appear in connected components of the crystal graph, which allows a proof using quasi-ribbon
tableaux and Young tableaux, and their readings.

The negative conjecture concerns the `skeleton' of the connected components of the crystal graph, which, loosely speaking, is a graph describing how connected components of the quasi-crystal graph are situated within a connected component of the crystal graph. We prove that part of this conjecture is true, and provide an explicit example disproving the remaining part.

This paper is organized as follows. \fullref{Section}{sec:preliminaries} recalls the main notions concerning the plactic and hypoplactic monoids, and their connection to crystals and quasi-crystals, respectively. Likewise, we recall the notion of Schur functions and fundamental quasi-symmetric functions.
In \fullref{Section}{sec:quasiarrays}, we introduce the notion of quasi-array and certain operators, that will be used to introduce the quasi-array graph, that will be used on the next section. In \fullref{Section}{sec:connection_qc}, we show explicit isomorphisms between certain quasi-array graphs and quasi-crystal graphs (\fullref{Theorems}{thm:isom-quasi-array-graph-quasi-crystal}, for the finite rank case, \and \fullref{Theorem}{thm:isom-quasi-array-graph-quasi-crystal-finite-rank} for finite rank), which implies the isomorphism (as unlabelled graphs) of certain quasi-crystal components (\fullref{Corollaries}{corol:isomorphism-unlabelled} and \ref{corol:isom-compositions-finite-rank}). Following these isomorphisms, we explore in \fullref{Section}{sec:geometry} a natural geometric interpretation of connected components of quasi-crystals as lattices. In \fullref{Section}{sec:schur_fund}, we show some application on the relationship of Schur functions and fundamental quasi-symmetric functions, which proves a conjecture of Maas-Gariépy. In \fullref{Section}{sec:skeleton}, we recall the notion of skeleton of a connected quasi-crystal graph, then prove part of a conjecture of Maas-Gariépy (\fullref{Proposition}{prop:even-len}), and disprove the remaining of this conjecture, by exhibiting a counter-example (\fullref{Example}{ex:counter-example}).

\section{Preliminaries}\label{sec:preliminaries}

\subsection{Alphabets}

Throughout this paper, $\aA$ will be the set of natural numbers viewed as an infinite ordered alphabet:
$\aA = \set{1 < 2 < 3 < \ldots}$. Further, $n$ will be a natural number and $\aA_n$ will be the set of the first $n$
natural numbers viewed as an ordered alphabet: $\aA_n = \set{1 < 2 < \ldots < n}$. The free monoid on an alphabet $X$
(that is, the set of words over $X$, including the empty word $\emptyword$, under the operation of concatenation) is
denoted $X^*$.

For any $w \in \aA^*$ and $a \in \aA$, $\abs{w}_a$ is the number of symbols $a$ contained in $w$. For any $w \in \aA^*$,
$\ev{w}$ is the tuple whose $a$-th component is $|w|_a$; such tuples are truncated to finite length by ignoring the
infinite suffix of entries $0$.

\subsection{Arrays}

This subsection introduces notation and concepts for arrays of cells, each of which may contain an entry from $\aA$.
Young tableaux and quasi-ribbon tableaux are particular kinds of array. Rows of arrays are indexed from top to bottom;
columns are indexed from left to right. Thus the $(i,j)$-th cell of an array is the cell in the $i$-th row from the top
and the $j$-th column from the left. When the $(i,j)$-th cell of an array $A$ contains an entry from $\aA$, this element
of $\aA$ is denoted $A_{(i,j)}$. Further, $\ev{A}$ is the tuple whose $a$-th component is the number of symbols
$a \in \aA$ appearing in $A$; as before, such tuples are truncated to finite length by ignoring the infinite suffix of
entries $0$.

For $k = 1,\ldots,m$, the \defterm{$k$-th diagonal} of an array comprises the $(i,j)$-th cells where $i + j - 1 = k$.
That is, the $k$-th diagonal comprises whichever of the $(1,k)$, $(2,k-1)$, $(3,k-2)$, \ldots, $(k-1,2)$, $(k,1)$-th
cells lie in the array. The following diagrams illustrate the $k$-th diagonals of a Young tableau and a quasi-ribbon
tableau for $k \in \set{1,2,\ldots,5}$:
\[
  \begin{tikzpicture}[baseline=(maintableau-1-1.base)]
    \matrix[pickmatrix] (maintableau) {
      \null \& \null \& \null \& \null \& \null \& \null \\
      |[drawnode]| \null \& |[drawnode]| \null \& |[drawnode]| \null \& \null \& \null \\
      |[drawnode]| \null \& |[drawnode]| \null \& |[drawnode]| \null \& \null \\
      |[drawnode]| \null \& |[drawnode]| \null \& |[drawnode]| \null \\
      |[drawnode]| \null \& |[drawnode]| \null \\
      \null \\
    };
    \foreach \j in {2,3,4} {
      \foreach \i in {1,2,3} {
        \fill[gray] (maintableau-\j-\i.center) circle (2pt);
      }
    }
    \fill[gray] (maintableau-5-1.center) circle (2pt);
    \fill[gray] (maintableau-5-2.center) circle (2pt);
    \begin{scope}[
      draw=gray,
      semithick,
      line cap=round,
      every node/.style={
        text=gray,
        inner sep=2pt,
        outer sep=0,
        node font=\footnotesize,
      }
      ]
      \draw (maintableau-1-2.center) -- node[anchor=west,pos=0,rotate=45] {$1$st} (maintableau-2-1.center);
      \draw (maintableau-1-3.center) -- node[anchor=west,pos=0,rotate=45] {$2$nd} (maintableau-3-1.center);
      \draw (maintableau-1-4.center) -- node[anchor=west,pos=0,rotate=45] {$3$rd} (maintableau-4-1.center);
      \draw (maintableau-1-5.center) -- node[anchor=west,pos=0,rotate=45] {$4$th} (maintableau-5-1.center);
      \draw (maintableau-1-6.center) -- node[anchor=west,pos=0,rotate=45] {$5$th} (maintableau-6-1.center);
    \end{scope}
  \end{tikzpicture}
  \qquad
  \begin{tikzpicture}[baseline=(maintableau-1-1.base)]
    \matrix[pickmatrix] (maintableau) {
      \null \& \null \& \null \& \null \& \null \& \null \\
      |[drawnode]| \null \& \null \& \null \& \null \& \null \\
      |[drawnode]| \null \& |[drawnode]| \null \& |[drawnode]| \null \& |[drawnode]| \null \\
      \null \& \null \& \null \\
      \null \& \null \\
      \null \\
    };
    \foreach \i in {1,2,3,4} {
      \fill[gray] (maintableau-3-\i.center) circle (2pt);
    }
    \fill[gray] (maintableau-2-1.center) circle (2pt);
    \begin{scope}[
      draw=gray,
      semithick,
      line cap=round,
      every node/.style={
        text=gray,
        inner sep=2pt,
        outer sep=0,
        node font=\footnotesize,
      }
      ]
      \draw (maintableau-1-2.center) -- node[anchor=west,pos=0,rotate=45] {$1$st} (maintableau-2-1.center);
      \draw (maintableau-1-3.center) -- node[anchor=west,pos=0,rotate=45] {$2$nd} (maintableau-3-1.center);
      \draw (maintableau-1-4.center) -- node[anchor=west,pos=0,rotate=45] {$3$rd} (maintableau-4-1.center);
      \draw (maintableau-1-5.center) -- node[anchor=west,pos=0,rotate=45] {$4$th} (maintableau-5-1.center);
      \draw (maintableau-1-6.center) -- node[anchor=west,pos=0,rotate=45] {$5$th} (maintableau-6-1.center);
    \end{scope}
  \end{tikzpicture}
\]

\subsection{Young tableau and the plactic monoid}

Let $\lambda = \parens{\lambda_1,\ldots,\lambda_{\plen\lambda}}$ be a partition.

\begin{definition}
A \defterm{Young diagram} of shape $\lambda$ is an array of cells, with $\lambda_i$ cells in the $i$-th row,
left-aligned.
\end{definition}

\begin{definition}
A \defterm{Young tableau} of shape $\lambda$ is a filling of a Young diagram with symbols from $\aA$ such that the entries in each row are non-decreasing
from left to right, and the entries in each column are strictly increasing from top to bottom. The set of all Young tableaux of shape $\lambda$ is denoted $\yt{\lambda}$.
\end{definition}

 An example of a Young
tableau of shape $(5,3,2,2)$ is
\begin{equation}
\label{eq:youngtableaueg}
\tableau{
1 \& 1 \& 3 \& 3 \& 6 \\
2 \& 3 \& 4 \\
4 \& 4 \\
5 \& 6 \\
}.
\end{equation}

\begin{definition}
A \defterm{standard Young tableau} of shape $\lambda$ is a Young tableau that is filled with symbols from
$\set{1,\ldots,\pwt\lambda}$, with each symbol appearing exactly once. The set of all standard Young tableaux of shape $\lambda$ is denoted $\syt{\lambda}$.	
\end{definition}

 Given a Young tableau $T$, its \emph{standardization} is the standard Young tableau obtained by replacing the letters $i$ consecutively from left to right with
\begin{align*}
	\alpha_1 &+ \alpha_2 + \ldots + \alpha_{i-1}+1,\\
	\alpha_1 &+ \alpha_2 + \ldots + \alpha_{i-1}+2,\\
	&\ldots\\
	\alpha_1 &+ \alpha_2 + \ldots + \alpha_{i-1}+ \alpha_i,
\end{align*}
for $1 \leq i \leq k$, with $\alpha_i$ being the number of occurrences of the letter $i$ in $T$.

For example, the following is a standard tableau, which is the standardization of the tableau \eqref{eq:youngtableaueg}:
\begin{equation}
	\label{eq:syteg}
	\tableau{
		1 \& 2 \& 5 \& 6 \& 12 \\
		3 \& 4 \& 9 \\
		7 \& 8 \\
		10 \& 11 \\
	}.
\end{equation}

The \defterm{column reading} $\colreading{T}$ of a tableau $T$ is the word in $\aA^*$ obtained by proceeding through the
columns, from leftmost to rightmost, and reading each column from bottom to top. For example, the column reading of the
tableau \eqref{eq:youngtableaueg} is $5421\,6431\,41\,3\,6$.

For any word $w \in \aA^*$, Schensted's algorithm computes a Young tableau $\pplac{w}$; for the details of this
algorithm, see \cite[\S~1.1]{fulton_young} or \cite[\S~3.1]{cm_hypoplactic}. The essential fact here is that
$\pplac{\colreading{T}} = T$ for any Young tableau $T$.

The \defterm{plactic congruence} $\placcong$ is defined on $\aA^*$ by the following:
\[
u \placcong v \iff \pplac{u} = \pplac{v}.
\]

\begin{example}\label{ex:placcong}
	We have $2113 \placcong 1213$, since
	$$\pplac{2113} = \pplac{1213} = \tableau{
		1 \& 1 \& 3\\
		2\\}.$$
\end{example}

\begin{definition}
	The \defterm{plactic monoid (of infinite rank)} $\plac$ is the factor monoid $\aA^*/{\placcong}$. The congruence $\placcong$
	naturally restricts to a congruence on $\aA_n^*$, and the factor monoid $\aA_n^*/{\placcong}$ is the \defterm{plactic monoid of rank $n$} (or finite rank) and is denoted $\plac_n$.
\end{definition}

\subsection{Quasi-ribbon tableaux and the hypoplactic monoid}

This subsection briefly recalls some essential definitions; see \cite[especially\ \S~4]{cm_hypoplactic} for more
background.

\begin{definition}
A \defterm{quasi-ribbon diagram} of shape $\sigma$, where $\sigma = (\sigma_1,\ldots,\sigma_k)$ is a composition, is an
array of cells, with $\sigma_i$ cells in the $i$-th row (counting from the top), aligned so that the leftmost cell of
the $i+1$-th row is below the rightmost cell of the $i$-th.	
\end{definition}

\begin{definition}

Given a composition $\sigma$, a \defterm{quasi-ribbon tableau} of shape $\sigma$ is a filling of a quasi-ribbon diagram of the same shape, with symbols from $\aA$ such that the entries in every row are non-decreasing from left to right and the entries in every column are strictly increasing from top to
bottom.
\end{definition}

 An example of a quasi-ribbon tableau is
\begin{equation}
\label{eq:qrteg}
\tableau{
 1 \& 2 \& 2                          \\
   \&   \& 3                          \\
   \&   \& 4 \& 4 \& 5 \& 5 \& 5      \\
   \&   \&   \&   \&   \&   \& 6 \& 7 \\
}.
\end{equation}
Note the following immediate consequences of the definition of a quasi-ribbon tableau: (1) for each $a \in \aA$, the
symbols $a$ in a quasi-ribbon tableau all appear in the same row, which must be the $j$-th row for some $j \leq a$; (2)
the $h$-th row of a quasi-ribbon tableau cannot contain symbols from $\set{1,\ldots,h-1}$.

The set of all quasi-ribbon tableaux of shape $\sigma$ is denoted $\qrt{\sigma}$.

The \defterm{column reading} $\colreading{T}$ of a quasi-ribbon tableau $T$ is the word in $\aA^*$ obtained by
proceeding through the columns, from leftmost to rightmost, and reading each column from bottom to top. For example, the
column reading of the quasi-ribbon tableau \eqref{eq:qrteg} is $1\,2\,432\,4\,5\,5\,65\,7$.

Given a word $u \in \aA^*$, there is an insertion algorithm that computes a quasi-ribbon tableau $\phypo{u}$ from $u$.
For the details of this algorithm, see \cite[Algorithm~4.4]{novelli_hypoplactic} or \cite[\S~4]{cm_hypoplactic}. If the
symbols appearing in $u$ are $\set{a_1 < a_2 < \ldots < a_m}$ (where $a_i \in \aA$), then $\phypo{u}$ is the unique
quasi-ribbon tableau containing $|u|_{a_i}$ entries $a_i$, with the entries $a_i$ and $a_{i+1}$ on different rows if and
only if $u$ contains a subsequence $a_{i+1}a_i$. A straightforward consequence of this is that
$\phypo{\colreading{T}} = T$ for any quasi-ribbon tableau $T$.

The \defterm{hypoplactic congruence} is defined on $\aA^*$ by
\[
  u \hypocong v \iff \phypo{u} = \phypo{v}.
\]

\begin{example}\label{ex:hypocong}
	The words $2131$ and $1213$ are hypoplactic congruent, since
	$$\phypo{2131} = \phypo{1213} = \tableau{1 \& 1\\
											   \& 2 \& 3\\}.$$

	We remark that these words are not plactic congruent, as we have
	$$\pplac{2131} = \tableau{1 \& 1\\
		2 \& 3\\} \neq
	\tableau{1 \& 1 \& 3\\
		2\\} = \pplac{1213}.$$
\end{example}

\begin{definition}
	The \defterm{hypoplactic monoid (of infinite rank)} $\hypo$ is the factor monoid $\aA^*/{\hypocong}$. The congruence
	$\hypocong$ naturally restricts to a congruence on $\aA_n^*$, and the factor monoid $\aA_n^*/{\hypocong}$ is the
	\defterm{hypoplactic monoid of rank $n$} (or finite rank) and is denoted $\hypo_n$.
\end{definition}

\subsection{Crystal and quasi-crystal graphs}

This subsection recalls the basic definitions and key results regarding the crystal and quasi-crystal graph and their
interaction; see \cite{cm_hypoplactic} for a full treatment.

\subsubsection{Crystals}

\begin{definition}
Given $i \in \nset$, the partial \defterm{Kashiwara operators} $\ke_i$ and $\kf_i$ are defined on $u \in \aA_n^*$ as follows.
Form a new word in $\set{{+},{-}}^*$ by replacing each letter $i$ of $u$ by the symbol $+$, each letter $i+1$ by the
symbol $-$, and every other symbol with the empty word, keeping a record of the original letter replaced by each symbol.
Then delete factors ${-}{+}$ until no such factors remain: the resulting word is
$\rho_i(uw) = {+}^{\kfcount_i(uw)}{-}^{\kecount_i(u)}$. Note that factors ${+}{-}$ are \emph{not} deleted.
\end{definition}

Regarding the operator $\ke_i$:
\begin{itemize}
  \item If $\kecount_i(u)=0$, then $\ke_i(u)$ is undefined.
  \item If $\kecount_i(u)>0$ then one obtains $\ke_i(u)$ by taking the letter $i+1$ which was replaced by the leftmost
        $-$ of $\rho_i(u)$ and changing it to $i$.
\end{itemize}
Regarding the operator $\kf_i$:
\begin{itemize}
  \item If $\kfcount_i(u)=0$, then $\kf_i(u)$ is undefined.
  \item If $\kfcount_i(u)>0$ then one obtains $\kf_i(u)$ by taking the letter $i$ which was replaced by the rightmost
        $+$ of $\rho_i(u)$ and changing it to $i+1$.
\end{itemize}
The operators $\ke_i$ and $\kf_i$ are mutually inverse, in the sense that if $\ke_i(u)$ is defined,
$u = \kf_i(\ke_i(u))$, and if $\kf_i(u)$ is defined, $u = \ke_i(\kf_i(u))$.

\begin{definition}
The \defterm{crystal graph} for $\plac$, denoted $\Gamma(\plac)$, is the directed labelled graph with vertex set $\aA^*$
and, for $u,v \in \aA^*$, an edge from $u$ to $v$ labelled by $i$ if and only if $v = \kf_i(u)$ (or, equivalently,
$u = \ke_i(v)$). The crystal graph for $\plac_n$, denoted $\Gamma(\plac_n)$, is the subgraph of $\Gamma(\plac)$ induced
by $\aA_n^*$.
\end{definition}

Notice that edge labels in $\Gamma(\plac_n)$ must lie in $\set{1,\ldots,n-1}$. For any $w \in \aA^*$, let
$\Gamma(\plac,w)$ (respectively, $\Gamma(\plac_n,w)$) denote the connected component of $\Gamma(\plac)$ (respectively,
$\Gamma(\plac_n)$) that contains the vertex $w$.

The fundamental connection between crystals and the plactic monoid is the following result:

\begin{theorem}[\cite{KN94}]
Given words $u, v \in \aA^*$, $u \placcong v$ if and only
if there is a weighted labelled digraph isomorphism $\theta : \Gamma(\plac,u) \to \Gamma(\plac,v)$ or
$\theta : \Gamma(\plac_n,u) \to \Gamma(\plac_n,v)$ with $\theta(u) = v$.
\end{theorem}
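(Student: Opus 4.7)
The plan is to identify each connected component $\Gamma(\plac, w)$ with the crystal of Young tableaux of the same shape as $\pplac{w}$, via the map $w \mapsto \pplac{w}$. Once this map is shown to be a weighted labelled digraph isomorphism, both directions of the theorem follow by functoriality. The technical heart (Key Lemma) is that the Kashiwara operators are compatible with the plactic congruence: if $u \placcong v$ and $\kf_i(u)$ is defined, then $\kf_i(v)$ is defined and $\kf_i(u) \placcong \kf_i(v)$; and analogously for $\ke_i$.

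I would establish the Key Lemma by observing that the sign word $\rho_i(u)$ and its associated counts depend only on the subsequence of $u$ drawn from $\set{i, i+1}$, and then checking compatibility of $\kf_i$ and $\ke_i$ with each elementary Knuth relation, namely $bac \placcong bca$ for $a < b \leq c$ and $acb \placcong cab$ for $a \leq b < c$. The case analysis splits according to how many of $\set{a,b,c}$ belong to $\set{i, i+1}$; in each sub-case one verifies directly that the leftmost unmatched $-$ (respectively, the rightmost unmatched $+$) in $\rho_i$ attaches to a letter whose position in the word is preserved by the Knuth swap.

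Granting the Key Lemma, the forward direction is immediate: if $u \placcong v$, then $\pplac{u} = \pplac{v}$, and the map $w \mapsto \pplac{w}$ intertwines the Kashiwara operators (after defining them on Young tableaux by $\kf_i(T) = \pplac{\kf_i(\colreading{T})}$), giving weighted labelled digraph isomorphisms from both $\Gamma(\plac, u)$ and $\Gamma(\plac, v)$ onto the crystal of Young tableaux of the shape of $\pplac{u}$. Composing yields $\theta : \Gamma(\plac, u) \to \Gamma(\plac, v)$ sending $u \mapsto \pplac{u} = \pplac{v} \mapsto v$, and the finite-rank case goes through identically with $\aA_n^*$ and $\Gamma(\plac_n)$. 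For the converse, each connected component of $\Gamma(\plac)$ contains a unique highest-weight source (a vertex on which every $\ke_i$ is undefined), and any weighted labelled digraph isomorphism must identify the two sources; writing $u$ and $v$ as images of their respective sources under a common sequence of Kashiwara operators and transporting to tableaux via $\pplac{\cdot}$, weight-preservation forces $\pplac{u} = \pplac{v}$, hence $u \placcong v$.

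The main obstacle is the Key Lemma itself, which is delicate rather than conceptually difficult: the sub-cases of the case analysis on Knuth relations are numerous but each reduces to a mechanical verification, and once the compatibility is in place the rest of the argument is essentially bookkeeping.
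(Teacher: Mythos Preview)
The paper does not prove this theorem: it is quoted from \cite{KN94} as background, so there is no in-paper argument to compare against. Your outline is the standard route and is correct in its architecture; the Key Lemma (compatibility of $\ke_i,\kf_i$ with the Knuth relations) is indeed the technical core, and your plan for it is the right one.

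Two points are underspecified and are not just bookkeeping. First, you invoke ``each connected component of $\Gamma(\plac)$ contains a unique highest-weight source'' as if it were obvious; it is true but needs an argument (for instance, one shows combinatorially that a word is highest-weight iff it is a reverse lattice/Yamanouchi word, and then proves that from any word one can reach such a word by a sequence of $\ke_i$'s, with uniqueness following because two highest-weight words in the same component would have the same weight and hence, by the next point, the same $P$-tableau, which combined with your Key Lemma forces equality). Second, in the converse you write ``weight-preservation forces $\pplac{u} = \pplac{v}$'', but what is actually needed is the intermediate statement that for \emph{highest-weight} words $u_0,v_0$, equality of weights gives $\pplac{u_0} = \pplac{v_0}$; this is exactly the characterisation just mentioned (the $P$-tableau of a Yamanouchi word is the superstandard tableau of shape given by the weight), and it should be stated rather than absorbed into ``weight-preservation''. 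Once those two ingredients are made explicit, the forward direction also goes through cleanly: injectivity of $w \mapsto \pplac{w}$ on a component follows from uniqueness of the highest-weight vertex together with the Key Lemma, and you do not in fact need surjectivity onto all of $\yt{\lambda}$, since both $\Gamma(\plac,u)$ and $\Gamma(\plac,v)$ map isomorphically onto the same connected piece of the tableau crystal.
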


Here, \defterm{weighted} means that $\ev{w} = \ev{\theta(w)}$ for all $w \in \Gamma(\plac,u)$ or
$w \in \Gamma(\plac_n,u)$. In fact, in the infinite-rank case (that is, $\plac$), all labelled digraph isomorphisms
between connected components of $\Gamma(\plac)$ are weighted; this does not hold in general for $\Gamma(\plac_n)$.
Henceforth, isomorphisms between connected components of $\Gamma(\plac_n)$ are assumed to be weighted, and in this
context the term `isomorphic' implies `weighted'.

The column readings of Young tableaux (respectively, of Young tableaux with entries from $\aA_n$) form a union of
connected components of $\Gamma(\plac)$ (respectively $\Gamma(\plac_n)$). Furthermore, each of these connected
components consists precisely of the column readings of the Young tableaux (respectively, of the Young tableaux with
entries from $\aA_n$) of a particular shape. The connected component comprising the column readings of the Young
tableaux (respectively, of Young tableaux with entries from $\aA_n$) of shape $\lambda$ is denoted
$\Gamma(\plac,\lambda)$ (respectively, $\Gamma(\plac_n,\lambda)$) and each vertex $u$ of this component can be
identified with the Young tableau $\pplac{u}$.

\subsubsection{Quasi-crystals}

\begin{definition}
Given $i \in \nset$, the partial \defterm{quasi-Kashiwara operators} $\e_i$ and $\f_i$ are defined on $u \in\aA^*$ as follows.
\begin{itemize}
\item If $u$ contains a subsequence $(i+1)i$, both $\e_i(u)$ and $\f_i(u)$ are undefined.
\item If $u$ does not contain a subsequence $(i+1)i$, but $u$ contains at least one symbol $i+1$, then $\e_i(u)$ is the
  word obtained from $u$ by replacing the left-most symbol $i+1$ by $i$; if $u$ contains no symbol $i+1$, then $\e_i(u)$
  is undefined.
\item If $u$ does not contain a subsequence $(i+1)i$, but $u$ contains at least one symbol $i$, then $\f_i(u)$ is the
  word obtained from $u$ by replacing the right-most symbol $i$ by $i+1$; if $u$ contains no symbol $i$, then $\f_i(u)$
  is undefined.
\end{itemize}
\end{definition}

The operators $\e_i$ and $\f_i$ are mutually inverse, in the sense that if $\e_i(u)$ is defined, $u = \f_i(\e_i(u))$,
and if $\f_i(u)$ is defined, $u = \e_i(\f_i(u))$.

\begin{example}\label{ex:quasi_kashiwara}
	Let $u = 12211 \in \aA_3^*$. Since $u$ contains a subsequence $21$, the quasi-Kahsiwara operators $\e_1$ and $\f_1$ are undefined. Moreover, $u$ does not contain a subsequence $32$. Thus, as it does not contain any symbol $3$, $\e_2$ is undefined, and since it contains at least one symbol $2$, $\f_2$ is defined and obtained by changing the right-most symbol $2$ to $3$, that is
	$$\f_2 (u) = 12311.$$
\end{example}

\begin{definition}
The \defterm{quasi-crystal graph} for $\hypo$, denoted $\Gamma(\hypo)$, is the directed labelled graph with vertex set
$\aA^*$ and, for $u,v \in \aA^*$, an edge from $u$ to $v$ labelled by $i$ if and only if $v = \f_i(u)$ (or,
equivalently, $u = \e_i(v)$). The quasi-crystal graph for $\hypo_n$, denoted $\Gamma(\hypo_n)$, is the subgraph of
$\Gamma(\hypo)$ induced by $\aA_n^*$.
\end{definition}

 Notice that edge labels in $\Gamma(\hypo_n)$ must lie in $\set{1,\ldots,n-1}$. For
any $w \in \aA^*$, let $\Gamma(\hypo,w)$ (respectively, $\Gamma(\hypo_n,w)$) denote the connected component of
$\Gamma(\hypo)$ (respectively, $\Gamma(\hypo_n)$) that contains the vertex $w$.

The fundamental connection between quasi-crystals and the hypoplactic monoid is the following result:

\begin{theorem}[{\cite[Theorem 1]{cm_hypoplactic}}]
	Given words $u,v \in \aA^*$, $u \hypocong v$ if
	and only if there is a weighted labelled digraph isomorphism $\theta : \Gamma(\hypo,u) \to \Gamma(\hypo,v)$ or
	$\theta : \Gamma(\hypo_n,u) \to \Gamma(\hypo_n,v)$ with $\theta(u) = v$.
\end{theorem}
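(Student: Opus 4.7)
The plan is to split the biconditional into two implications, with the forward direction ($u \hypocong v$ implies the existence of a weighted isomorphism) being the substantive one. The heart of the argument is a compatibility lemma: if $u \hypocong v$, then for each $i \in \nset$, $\f_i(u)$ is defined if and only if $\f_i(v)$ is defined, and in that case $\f_i(u) \hypocong \f_i(v)$; analogously for $\e_i$. I would prove this by expressing the relevant conditions on $u$ in terms of the quasi-ribbon tableau $T = \phypo{u}$: the presence of a subsequence $(i+1)i$ in $u$ is equivalent to both $i$ and $i+1$ appearing in $T$ on distinct rows, and in the absence of such a subsequence, the word obtained by replacing the rightmost $i$ with $i+1$ inserts to a quasi-ribbon tableau described purely in terms of $T$ (moving or creating an $i+1$-cell, possibly splitting a row of $i$'s). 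Since the same description applies starting from $v$, we obtain $\phypo{\f_i(u)} = \phypo{\f_i(v)}$, hence $\f_i(u) \hypocong \f_i(v)$.

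Granting the lemma, the isomorphism is constructed as follows. Every $w \in \Gamma(\hypo,u)$ is reached from $u$ by a finite sequence of partial quasi-Kashiwara operators $\e_i$ and $\f_i$; set $\theta(w)$ to be the result of applying the same sequence to $v$. The lemma ensures that this is defined and that the end result is independent of the chosen sequence, since any two sequences leading from $u$ to $w$ give hypoplactic-equivalent endpoints with the same value of $\phypo$. Thus $\theta$ is a well-defined map from $\Gamma(\hypo,u)$ to $\Gamma(\hypo,v)$ that respects labels and edges by construction; its inverse is defined symmetrically. The weighted property is automatic, since each $\f_i$ decreases $|\cdot|_i$ by one and increases $|\cdot|_{i+1}$ by one, and similarly for $\e_i$. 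For the converse, a weighted labelled isomorphism $\theta$ with $\theta(u) = v$ forces $\ev{u} = \ev{v}$; matching the unique source $u_0$ of $\Gamma(\hypo,u)$ with the source $\theta(u_0)$ of $\Gamma(\hypo,v)$ and tracing the common operator sequence back to $u$ and $v$ respectively, one concludes from the lemma that $\phypo{u} = \phypo{v}$, i.e.\ $u \hypocong v$.

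The main obstacle is the compatibility lemma, which requires a careful case analysis of how Krob--Thibon insertion interacts with replacing the rightmost $i$ in a word by $i+1$ under the assumption of no $(i+1)i$ subsequence. One must check each configuration of the cells labelled $i$ and $i+1$ in $\phypo{u}$ (same row, different rows, or one letter absent) and verify that $\phypo{\f_i(u)}$ admits a description depending only on $\phypo{u}$ and $i$. The corresponding statement for $\e_i$ then follows from the mutual-inverse relationship. The finite-rank case is handled by the same argument restricted to $\aA_n$. Once the lemma is in place, the remainder of the proof is essentially formal.
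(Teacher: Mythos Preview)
The paper does not prove this statement: it is quoted from \cite{cm_hypoplactic} as background, with no argument given here. There is therefore nothing in this paper to compare your proposal against.

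For what it is worth, your outline is correct and is essentially how the result is established in the cited source. The compatibility lemma you isolate is exactly the crux, and the characterization of $\phypo{u}$ recalled in the paper (namely that $\phypo{u}$ records the multiset of symbols of $u$ together with, for each pair of consecutive symbols $a_i,a_{i+1}$ appearing in $u$, whether $u$ contains a subsequence $a_{i+1}a_i$) is precisely what makes the case analysis go through. One point to make explicit in your converse: you are using that each connected component of $\Gamma(\hypo)$ has a unique highest-weight vertex and that a highest-weight quasi-ribbon tableau is determined by its evaluation (its $j$-th row consists entirely of $j$'s, so its shape equals its evaluation). Both are easy, but they are what turns ``same weight at the source'' into ``same $\phypo$ at the source'', after which your forward lemma propagates $\hypocong$ along the common operator path to $u$ and $v$.
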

Just as for the $\plac$, all labelled digraph
isomorphisms between connected components of $\Gamma(\hypo)$ are weighted; this does not hold in general for
$\Gamma(\hypo_n)$. Henceforth, isomorphisms between connected components of $\Gamma(\hypo_n)$ are assumed to be
weighted, and in this context the term `isomorphic' implies `weighted'.

\begin{figure}
	\begin{tikzpicture}[yscale=1.2]
		\node (1) at (2,4) {211};
		\node (a) at (0,3) {212};
		\node (2) at (3,3) {311};
		\node (b) at (0,2) {213};
		\node (3) at (3,2) {312};
		\node (c) at (0,1) {313};
		\node (4) at (3,1) {322};
		\node (d) at (1,0) {323};
		\draw (1) edge[->, very thick, blue, above right] node[black] {\footnotesize $2$} (2);
		\draw (2) edge[->, very thick, red, right] node[black] {\footnotesize $1$} (3);
		\draw (3) edge[->, very thick, red, right] node[black] {\footnotesize $1$} (4);

		\draw (a) edge[->, very thick, blue, left] node[black] {\footnotesize $2$} (b);
		\draw (b) edge[->, very thick, blue, left] node[black] {\footnotesize $2$} (c);
		\draw (c) edge[->, very thick, red, below left] node[black] {\footnotesize $1$} (d);

		\draw (1) edge[->, dashed, red, above left] node[black] {\footnotesize $1$} (a);
		\draw (4) edge[->, dashed, blue, below right] node[black] {\footnotesize $2$} (d);
	\end{tikzpicture}
\caption{The connected components $\Gamma (\hypo_3, 211)$ and $\Gamma(\hypo_3, 212)$ (solid edges). Adding the dashed edges, one obtains the connected component $\Gamma (\plac_3, 211)$.}
\label{fig:crystal_quasicrystal}
\end{figure}

\begin{figure}
	\begin{tikzpicture}[yscale=1.2]
	\node (1) at (3,4) {211};

	\node (a) at (0,3) {212};
	\node (2) at (4,3) {311};

	\node (b) at (0,2) {213};
	\node (3) at (4,2) {312};
	\node (aa) at (5,2) {411};

	\node (c) at (0,1) {313};
	\node (cc) at (1,1) {214};
	\node (4) at (4,1) {322};
	\node (4a) at (5,1) {412};
	\node (4b) at (6,1) {511};

	\node (d) at (1,0) { };
	\node (d1) at (0,0) {};
	\node (d2) at (2,0) {};

	\node (f1) at (4,0) {};
	\node (f2) at (5,0) {};
	\node (f3) at (6,0) {};
	\node (f4) at (7,0) {};

	\draw (1) edge[->, very thick, blue, above right] node[black] {\footnotesize $2$} (2);
	\draw (2) edge[->, very thick, red, left] node[black] {\footnotesize $1$} (3);
	\draw (3) edge[->, very thick, red, left] node[black] {\footnotesize $1$} (4);

	\draw (a) edge[->, very thick, blue, left] node[black] {\footnotesize $2$} (b);
	\draw (b) edge[->, very thick, blue, left] node[black] {\footnotesize $2$} (c);
	\draw (c) edge[-, very thick, red, right] node[black] {\footnotesize $1$} (d);

	\draw (2) edge[->, very thick, ForestGreen, above] node[black] {\footnotesize $3$} (aa);
	\draw (b) edge[->, very thick, ForestGreen, above] node[black] {\footnotesize $3$} (cc);
	\draw (c) edge[-, very thick, ForestGreen, left] node[black] {\footnotesize $3$} (d1);
	\draw (cc) edge[-, very thick, blue, left] node[black] {\footnotesize $2$} (d1);
	\draw (cc) edge[-, very thick, brown, above] node[black] {\footnotesize $4$} (d2);

	\draw (3) edge[->, very thick, ForestGreen, above] node[black] {\footnotesize $3$} (4a);
	\draw (aa) edge[->, very thick, red, right] node[black] {\footnotesize $1$} (4a);
	\draw (aa) edge[->, very thick, brown, above] node[black] {\footnotesize $4$} (4b);

	\draw (1) edge[->, dashed, red, above left] node[black] {\footnotesize $1$} (a);

	\draw (4) edge[-, dashed, blue, above] node[black] {\footnotesize $2$} (d);

	\draw (4) edge[-, very thick, ForestGreen, left] node[black] {\footnotesize $3$} (f1);
	\draw (4a) edge[-, very thick, red, left] node[black] {\footnotesize $1$} (f1);
	\draw (4b) edge[-, very thick, red, right] node[black] {\footnotesize $1$} (f3);
	\draw (4a) edge[-, very thick, blue, left] node[black] {\footnotesize $2$} (f2);
	\draw (4a) edge[-, very thick, brown, above] node[black] {\footnotesize $4$} (f3);
	\draw (4b) edge[-, very thick, cyan, above] node[black] {\footnotesize $5$} (f4);

	\end{tikzpicture}
	\caption{Part of the connected components $\Gamma (\hypo, 211)$ and $\Gamma (\hypo, 212)$, in infinite rank (solid edges). Adding the dashed edges, one obtains part of the connected component $\Gamma(\plac, 211)$. }
	\label{fig:quasicrystal_infinite}
\end{figure}

The column readings of quasi-ribbon tableaux (respectively, of quasi-ribbon tableaux with entries from $\aA_n$) form a
union of connected components of $\Gamma(\hypo)$ (respectively $\Gamma(\hypo_n)$). Furthermore, each of these connected
components consists precisely of the column readings of the quasi-ribbon tableaux (respectively, of quasi-ribbon
tableaux with entries from $\aA_n$) of a particular shape. The connected component comprising the column readings of the
quasi-ribbon tableaux (respectively, of quasi-ribbon tableaux with entries from $\aA_n$) of shape $\sigma$ is denoted
$\Gamma(\hypo,\sigma)$ (respectively, $\Gamma(\hypo_n,\sigma)$) and each vertex $u$ of this component can be
identified with the quasi-ribbon tableau $\phypo{u}$.

Throughout this paper, we will use the terminology `infinite rank' (respectively `finite rank') whenever we refer to the monoids $\plac$ or $\hypo$ (respectively $\plac_n$ and $\hypo_n$) and their related crystal and quasi-crystal graphs. To illustrate these differences, we refer to \fullref{Figures}{fig:crystal_quasicrystal} and \ref{fig:quasicrystal_infinite}.

\subsubsection{Relationship between crystal and quasi-crystal graphs}

Whenever $\e_i$ is defined, so is $\ke_i$, and similarly whenever $\f_i$ is defined, so is $\kf_i$. Thus every action of
a quasi-Kashiwara operator is also an action of a Kashiwara operator. Thus the quasi-crystal graph $\Gamma(\hypo)$
(respectively, $\Gamma(\hypo_n)$) is a subgraph of the crystal graph $\Gamma(\plac)$ (respectively, $\Gamma(\plac_n)$).
Therefore each connected component of the crystal graph $\Gamma(\plac)$ (respectively, $\Gamma(\plac_n)$) is a union of
connected components of $\Gamma(\hypo)$ (respectively, $\Gamma(\hypo_n)$).

Using slightly inconsistent language, an action of a \defterm{strict} Kashiwara operator is an action of a Kashiwara
operator that is not the action of a quasi-Kashiwara operator. The actions of strict Kashiwara and quasi-Kashiwara
operators are distinguished by their effects on the minimal parsings of Young tableaux. Only the definition and
essential details of minimal parsings are recalled below; see \cite[\S~1.2.2]{maasgariepy_quasicrystal} for
further background.

\begin{definition}\label{def:min_pars}
	The \defterm{minimal parsing} of a Young tableau $T$ is the division of $T$ into a minimum number of bands of cells. A
\defterm{band} is a set of cells, at most one from each column, such that
\begin{enumerate}
  \item if it contains cells $(i,j)$ and $(i',j')$ with $j < j'$, then $i \geq i'$ and $T_{(i,j)} \leq T_{(i',j')}$;
  \item if it contains the cell with an entry $a$, it contains all cells with entries $a$ in $T$.
\end{enumerate}
A minimal parsing is said to have \defterm{type} $\alpha$ if its $i$-th band has length $\alpha_i$.
\end{definition}

\begin{example}\label{ex:min_pars}
For instance, the minimal parsing of the Young tableau \eqref{eq:youngtableaueg} is
\[
  \begin{tikzpicture}
    \matrix[pickmatrix] (maintableau) {
      |[drawnode]| 1 \& |[drawnode]| 1 \&  3 \& 3 \& 6 \\
      2 \& 3 \& 4 \\
      4 \& 4 \\
      5 \& 6 \\
    };
  \end{tikzpicture}
  \quad
  \begin{tikzpicture}
    \matrix[pickmatrix] (maintableau) {
      1 \& 1 \& |[drawnode]| 3 \& |[drawnode]| 3 \& 6 \\
      |[drawnode]| 2 \& |[drawnode]| 3 \& 4 \\
      4 \& 4 \\
      5 \& 6 \\
    };
  \end{tikzpicture}
  \quad
  \begin{tikzpicture}
    \matrix[pickmatrix] (maintableau) {
      1 \& 1 \& 3 \& 3 \& 6 \\
      2 \& 3 \& |[drawnode]| 4 \\
      |[drawnode]| 4 \& |[drawnode]| 4 \\
      5 \& 6 \\
    };
  \end{tikzpicture}
  \quad
  \begin{tikzpicture}
    \matrix[pickmatrix] (maintableau) {
      1 \& 1 \& 3 \& 3 \& |[drawnode]| 6 \\
      2 \& 3 \& 4 \\
      4 \& 4 \\
      |[drawnode]| 5 \& |[drawnode]| 6 \\
    };
  \end{tikzpicture}
\]
and it has type $(2,4,3,3)$.
\end{example}

Note that the minimal parsing is the pattern of \emph{cells}, not of entries.

Actions of quasi-Kashiwara operators \emph{do not} alter the minimal parsing of the Young tableau. Actions of strict
Kashiwara operators \emph{do} alter its minimal parsing.

For any Young tableau, there is a unique standard Young tableau with the same minimal parsing. For example,
\eqref{eq:syteg} is the unique standard Young tableau with the same minimal parsing as \eqref{eq:youngtableaueg}.

\begin{definition}
The \defterm{descent set} of a standard Young tableau is the set of entries $i$ such that $i + 1$ appears in a row of
greater index; each such $i$ is a \defterm{descent}. The \defterm{descent composition} associated to the descent set
$\set{i_1 < i_2 < \ldots < i_k}$ of a standard Young tableau of size $n$ is
$$\alpha = \parens{i_1, i_2 - i_1, i_3 - i_2, \ldots, n - i_k}.$$
The descent composition of a standard Young tableau $T$ is denoted $\dc{T}$.
\end{definition}

For standard tableaux, notice that the descent composition gives the lengths of the bands making up the minimal parsing, and thus coincides with its type.

\begin{example}\label{ex:descent}
Considering the standard Young tableau \eqref{eq:syteg}, its descent set is $\{2,6,9\}$ and its descent composition is $(2,4,3,3)$. The descent composition coincides with the minimal parsing type of the Young tableau \eqref{eq:youngtableaueg}, whose standardization is \eqref{eq:syteg}.
\end{example}

We remark that the descent composition does not uniquely determine a minimal parsing, and thus, it is possible for distinct quasi-crystal graphs to be indexed by the same descent composition, as illustrated by the following example.

\begin{example}
The following tableaux have distinct minimal parsings, with the same descent composition $(2,3,1)$:
$$\tableau{
	1 \& 1 \& 2\\
	2 \& 2\\
	3\\} \qquad
\tableau{
	1 \& 1 \& 2 \& 2\\
	2\\
	3\\}$$
\end{example}

Let $T$ be a Young tableau and let $\sigma$ be the descent composition of its standardization, the unique standard Young tableau with the same minimal parsing as $T$. Then $\sigma$ is the shape of $\phypo{u}$, for any word $u$ with $\pplac{u} = T$. Furthermore,
$\Gamma(\hypo,u)$ is made up of precisely those words $w \in \Gamma(\plac,u)$ such that $\pplac{w}$ has the same minimal
parsing as $T$.

\subsection{Symmetric and quasi-symmetric functions}

The essential facts about the rings of symmetric and quasi-symmetric functions are recalled here; for background, see
\cite{gessel_multipartite}.

The ring of quasi-symmetric functions $\QSym$ has as a basis the fundamental quasi-symmetric functions.

\begin{definition}\label{def:fundamental_function}
Given a composition $\alpha$, the \defterm{fundamental quasi-symmetric function} $F_\alpha$ is defined
by
\[
  F_\alpha = \sum_{\alpha\preceq\beta} M_\beta
\]
where $\alpha \preceq \beta$ indicates that the composition $\beta = \parens{\beta_1,\ldots,\beta_{\clen{\beta}}}$ is a
refinement of $\alpha$, and where $M_\beta$ is the sum of monomials
$x_{i_1}^{\beta_1}\cdots x_{i_{\clen{\beta}}}^{\beta_{\clen{\beta}}}$ where $i_1 < i_2 < \ldots < i_{\clen{\beta}}$.
\end{definition}

Each such monomial corresponds to a quasi-ribbon tableau of shape $\alpha$ containing $\beta_j$ symbols $i_j \in \aA$.
Hence
\[
  F_\alpha = \sum_{Q \in \qrt{\alpha}} x^Q,
\]
where $x^Q$ denotes the monomial $x_1^{\alpha_1}\cdots x_n^{\alpha_n}$ with
$\ev{Q} = \parens{\alpha_1,\ldots,\alpha_n}$. Thus, each $F_\alpha$ corresponds to the isomorphism class of connected
components of $\Gamma(\hypo)$ where the quasi-ribbon tableaux have shape $\alpha$ \cite{cm_hypoplactic, maasgariepy_quasicrystal}.

The ring of symmetric functions $\Sym$ has as basis the \defterm{Schur functions}. For a partition $\lambda$, the Schur
function $s_\lambda$ is defined by
\[
  s_\lambda = \sum_{T \in \yt{\lambda}} x^T
\]
where $x^T$ denotes the monomial $x_1^{\alpha_1}\cdots x_n^{\alpha_n}$ where
$\ev{T} = \parens{\alpha_1,\ldots,\alpha_n}$ (here $n$ is the largest symbol appearing in $T$). Thus each $s_\lambda$
corresponds to the isomorphism class of connected components of $\Gamma(\plac)$ where the Young tableaux have shape
$\lambda$.

Combining these observations, one obtains the following result due to Gessel \cite{gessel_multipartite},
\[
  s_\lambda = \sum_{T \in \syt{\lambda}} F_{\dc{T}}.
\]

\section{Quasi-arrays}\label{sec:quasiarrays}

This section is devoted to introducing a combinatorial object that explains the existence of unweighted unlabelled
digraph isomorphism between certain connected components of $\Gamma(\hypo)$ or $\Gamma(\hypo_n)$.

\begin{definition}\label{def:quasiarray}
A \defterm{quasi-array} of size $m$ is an array of cells $Q$, with each cell containing an entry from $\aA$, having $m$
rows and columns, that satisfies the following conditions:
\begin{itemize}
  \item[(A1)] For $i = 1,\ldots,m$, the $i$-th row has $m-i+1$ cells.
  \item[(A2)] The entries in the first row are weakly increasing from left to right. That is, $Q_{(1,j)} \leq Q_{(1,j+1)}$ for
        $j = 1,\ldots,m-1$.
  \item[(A3)] In each diagonal from upper right to lower left, the entries are an increasing sequence of consecutive
        elements of $\aA$. That is, $Q_{(i,j)} = Q_{(i-1,j+1)} + 1$ (or, equivalently,
        $Q_{(i,j)} = Q_{(1,j+i-1)} + i - 1$) for $i = 2,\ldots,m$ and $1 \leq j \leq i-1$.
\end{itemize}
\end{definition}

For instance, the following is a quasi-array:
\begin{equation}
  \label{eq:quasi-array-example}
  \tableau{
    2 \& 3 \& 3 \& 5 \& 8 \\
    4 \& 4 \& 6 \& 9 \\
    5 \& 7 \& 10 \\
    8 \& 11 \\
    12 \\
  }
\end{equation}

It is an immediate consequence of conditions (A2) and (A3) that the entries in each row of a quasi-array are weakly
increasing from left to right, and that the entries in each column of a quasi-array are strictly increasing from top to
bottom.

Thus choosing any quasi-ribbon shape within a quasi-array yields a quasi-ribbon tableau. For a given quasi-array $Q$ of
size $m$ and a composition $\sigma$ of $m$, let $\pickqrt{Q}{\sigma}$ be the quasi-ribbon tableau obtained by taking the
quasi-ribbon of shape $\sigma$ within $Q$ whose first entry is the top-left-most entry of $Q$. By the choice of $\sigma$
as a composition of $m$, the last entry of $\pickqrt{Q}{\sigma}$ will be on the $m$-th diagonal.

\begin{example}\label{ex:qarray_qribbon}
If $Q$ is
the quasi-array in \eqref{eq:quasi-array-example}, then
\[
  \pickqrt{Q}{\parens{4,1}} =
  \tableau[topalign]{
    2 \& 3 \& 3 \& 5 \\
      \&   \&   \& 9 \\
  }.
  \qquad
  \pickqrt{Q}{\parens{1,2,2}} =
  \tableau[topalign]{
    2 \\
    4 \& 4 \\
      \& 7 \& 10 \\
  }.
\]
\end{example}

On the other hand, a quasi-ribbon tableau $T$ of length $m$ and shape $\sigma$ determines a unique quasi-array
$\genqa{T}$ of size $m$ such that $\pickqrt{\genqa{T}}{\sigma} = T$. The quasi-array $\genqa{T}$ is obtained by taking
an empty quasi-array of size $m$, placing $T$ into it so that the top-left-most entries of $T$ and the array are
aligned, and filling in the rest of the cells so that each diagonal is an increasing sequence of consecutive symbols
from $\aA$. It is easy to see that $Q = \genqa{\pickqrt{Q}{\sigma}}$ for any quasi-array $Q$ of size $m$ and any
composition $\sigma$ of $m$.

\begin{example}\label{ex:qribbon_qarray}
If $T$ is the quasi-ribbon tableau on the left side of \fullref{Example}{ex:qarray_qribbon}, then we recover the quasi-array \eqref{eq:quasi-array-example}:
\begin{equation}\notag
	\genqa{T} = \tableau{
		2 \& 3 \& 3 \& 5 \& {\color{gray} 8}\\
		{\color{gray} 4} \& {\color{gray} 4} \& {\color{gray} 6} \& 9 \\
		{\color{gray} 5} \& {\color{gray} 7} \& {\color{gray} 10}\\
		{\color{gray} 8} \& {\color{gray} 11}\\
		{\color{gray} 12}\\}
\end{equation}

\end{example}

The set of all quasi-arrays is denoted $\setqa$. The set of all quasi-arrays in which the rightmost entry of the first
row is at most $n$ is denoted $\setqa_n$. Thus the rightmost entry of the $i$-th row of a quasi-array in $\setqa_n$ is
at most $n+i-1$. Thus the quasi-array in \eqref{eq:quasi-array-example} lies in $\setqa_n$ for all $n \geq 8$ but not in
$\setqa_7$.

\begin{definition}\label{def:qa_ops}
For $k \in \nset$, define partial operators $\tc_k$ and $\td_k$ on $\setqa$ as follows. Let $Q \in \setqa$ have size
$m$. Then:
\begin{itemize}
  \item The partial operator $\td_m$ is defined on $Q$, and $\td_m(Q)$ is obtained from $Q$ by adding $1$ to every entry
        of the $m$-th diagonal of $Q$. That is,
        \[
          \td_m(Q)_{(i,j)} = \begin{cases}
                               Q_{(i,j)} + 1 & \text{if $j = m -i +1$,} \\
                               Q_{(i,j)} & \text{otherwise,}
                             \end{cases}
        \]
        for $i = 1,\ldots,m$ and $j = 1,\ldots,m-i+1$.

        For $k = 1,\ldots, m-1$, the partial operator $\td_k$ is defined on $Q$ if and only if
        $Q_{(1,k)} < Q_{(1,k+1)}$; in this case, $\td_k(Q)$ is obtained from $Q$ by adding $1$ to every entry of the
        $k$-th diagonal of $Q$. That is,
        \[
          \td_k(Q)_{(i,j)} = \begin{cases}
                               Q_{(i,j)} + 1 & \text{if $j = k -i +1$,} \\
                               Q_{(i,j)} & \text{otherwise,}
                             \end{cases}
        \]
        for $i = 1,\ldots,m$ and $j = 1,\ldots,m-i+1$.

        For $k > m$, the partial operator $\td_k$ is undefined on $Q$.

  \item For $k = 2,\ldots,m$, the partial operator $\tc_k$ is defined on $Q$ if and only if $Q_{(1,k)} > Q_{(1,k-1)}$; in
        this case, $\tc_k(Q)$ is obtained from $Q$ by subtracting $1$ from every entry of the $k$-th diagonal of $Q$. That is,
        \[
          \tc_k(Q)_{(i,j)} = \begin{cases}
                               Q_{(i,j)} - 1 & \text{if $j = k -i +1$,} \\
                               Q_{(i,j)} & \text{otherwise.}
                             \end{cases}
        \]
        for $i = 1,\ldots,m$ and $j = 1,\ldots,m-i+1$.

        The partial operator $\tc_1$ is defined on $Q$ if and only if $Q_{(1,1)} > 1$; in this case, $\tc_1(Q)$ is
        obtained from $Q$ by subtracting $1$ from the $(1,1)$-th entry of $Q$ (which is the unique entry on the first
        diagonal). That is,
        \[
          \tc_1(Q)_{(i,j)} = \begin{cases}
                               Q_{(i,j)} - 1 & \text{if $i = j = 1$,} \\
                               Q_{(i,j)} & \text{otherwise,}
                             \end{cases}
        \]
        for $i = 1,\ldots,m$ and $j = 1,\ldots,m-i+1$.

        For $k > m$, the partial operator $\tc_k$ is undefined on $Q$.

\end{itemize}
\end{definition}

\begin{example}
	Considering the quasi-array $Q$ in \eqref{eq:quasi-array-example}, of size 5. Adding 1 to all the entries of the second diagonal, one obtains a diagram that is not a quasi-array, as the first row is not weakly increasing, therefore, $\td_2$ is undefined on $Q$. Now, subtracting 1 to all the entries of this diagonal yields a valid quasi-array, hence
	$$\tc_2 (Q) = \tableau{2 \& {\bf 2} \& 3 \& 5 \& 8\\
						   {\bf 3} \& 4 \& 6 \& 9\\
					   	   5 \& 7 \& 10\\
				   	       8 \& 11\\
			   	           12 \\}.$$
	Likewise, subtracting 1 to the entries of the third diagonal does not yield a valid quasi-array, hence $\tc_3$ is undefined on $Q$, while adding 1 to the entries of this diagonal, we obtain
	$$\td_3 (Q) = \tableau{2 \& 3 \& {\bf 4} \& 5 \& 8\\
		4 \& {\bf 5} \& 6 \& 9\\
		{\bf 6} \& 7 \& 10\\
		8 \& 11\\
		12 \\}.$$
\end{example}

\begin{lemma}
  \label{lem:diagonal-operators-closure}
  Let $k \in \nset$ and $Q \in \setqa$.
  \begin{enumerate}
    \item If $\tc_k$ is defined on $Q$, then $\tc_k(Q) \in \setqa$.
    \item If $\td_k$ is defined on $Q$, then $\td_k(Q) \in \setqa$.
  \end{enumerate}
\end{lemma}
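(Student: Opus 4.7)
The plan is to verify that each of the three defining conditions (A1), (A2), (A3) of a quasi-array is preserved by the operators $\tc_k$ and $\td_k$. Since both operators only modify entries and not the shape of the array, (A1) is automatically preserved. The row- and column-monotonicity noted after \fullref{Definition}{def:quasiarray} is a consequence of (A2) and (A3), so it need not be checked separately.

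Next I would handle (A3), which states $Q_{(i,j)} = Q_{(i-1,j+1)} + 1$ for cells $(i,j)$ and $(i-1,j+1)$, both of which lie on the $(i+j-1)$-th diagonal. The key observation is that (A3) is \emph{diagonal-local}: it only relates entries on the same diagonal. Since $\tc_k$ and $\td_k$ modify every entry on the $k$-th diagonal by the same constant ($-1$ or $+1$) and leave all other diagonals untouched, the consecutive-increasing pattern on each diagonal is preserved.

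The main work is verifying (A2), which asserts that the first row $Q_{(1,1)} \leq Q_{(1,2)} \leq \cdots \leq Q_{(1,m)}$ is weakly increasing. Note that $Q_{(1,k)}$ is the unique first-row cell on the $k$-th diagonal, so each operator alters at most one entry of the first row. For $\td_k$ with $1 \leq k < m$, adding $1$ to $Q_{(1,k)}$ preserves the inequality with $Q_{(1,k-1)}$ (which could only move further below $Q_{(1,k)}+1$), while the inequality $Q_{(1,k)}+1 \leq Q_{(1,k+1)}$ follows from the defining hypothesis $Q_{(1,k)} < Q_{(1,k+1)}$. For $k=m$, only the inequality with $Q_{(1,m-1)}$ needs checking, and it is trivial. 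Dually, for $\tc_k$ with $2 \leq k \leq m$, subtracting $1$ from $Q_{(1,k)}$ preserves the inequality with $Q_{(1,k+1)}$ when $k<m$, while the defining hypothesis $Q_{(1,k-1)} < Q_{(1,k)}$ yields $Q_{(1,k-1)} \leq Q_{(1,k)} - 1$. For $\tc_1$, the defining hypothesis $Q_{(1,1)} > 1$ ensures the new $(1,1)$-entry still lies in $\aA$, and weak monotonicity with $Q_{(1,2)}$ is immediate.

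I do not expect a serious obstacle: the argument is a short case analysis. The cleanest organization is to isolate the diagonal-locality observation (which handles (A3) and (A1) uniformly in one stroke) and then dispatch the four boundary cases for (A2) corresponding to $\td_k$ with $k < m$, $\td_m$, $\tc_k$ with $k \geq 2$, and $\tc_1$. The only subtle point worth emphasising is that the defined-ness hypothesis on each operator is \emph{exactly} what is required to preserve (A2) at the modified first-row position, confirming that \fullref{Definition}{def:qa_ops} has been calibrated correctly.
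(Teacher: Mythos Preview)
Your proposal is correct and follows essentially the same approach as the paper: verify (A1) via shape preservation, observe that (A3) is diagonal-local and hence preserved when a whole diagonal is shifted by a constant, and check (A2) by a short case analysis using the defined-ness hypotheses at the boundary position. The paper organizes the cases by operator (treating $\tc_k$ in detail and deferring $\td_k$ to ``similar reasoning''), whereas you organize by condition and spell out all four boundary cases for (A2), but the substance is the same.
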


\begin{proof}
  Suppose $\tc_k$ is defined on $Q$. Note that $\tc_k(Q)$ has the same shape as $Q$, so $\tc_k(Q)$ satisfies condition
  (A1) in the definition of a quasi-array.

  Consider first the case where $k=1$. Then $Q_{(1,1)} > 1$, and entries of $\tc_1(Q)$ are equal to those of $Q$ except
  that $\tc_k(Q)_{(1,1)} = Q_{(1,1)} - 1 \in \aA$. Since rows of $Q$ are weakly increasing from left to right, it
  follows that rows of $\tc_1(Q)$ are weakly increasing left to right, so $\tc_1(Q)$ satisfies condition (A2). The first
  diagonal of $\tc_1(Q)$ comprises only one entry and the other diagonals equal those of $Q$, so $\tc_1(Q)$ satisfies
  condition (A3).

  Now suppose $k \geq 2$. Then $Q_{(1,k-1)} < Q_{(1,k)}$, and $\tc_k(Q)$ is obtained from $Q$ by subtracting $1$
  from every entry of the $k$-th diagonal of $Q$. Since the $k$-th diagonal of $Q$ is a increasing sequence of
  consecutive elements of $\aA$, so is the $k$-th diagonal of $\tc_k(Q)$. All other diagonals of $Q$ and $\tc_k(Q)$ are
  equal, so $\tc_k$ satisfies condition (A3).

  To prove that $\tc_k(Q)$ satisfies condition (A2), it is necessary to show that the first row of $\tc_k(Q)$ is weakly
  increasing from left to right. The only place this could fail is between entries on the $(k-1)$-th and $k$-th
  diagonals. But $\tc_k(Q)_{(1,k-1)} = Q_{(1,k-1)} \leq Q_{(1,k)} - 1 = \tc_k(Q)_{(1,k)}$, and so $\tc_k(Q)$ satisfies
  condition (A2).

  Hence $\tc_k(Q)$ is a quasi-array.

  Similar reasoning shows that if $\td_k$ is defined on $Q$, then $\td_k(Q)$ is a quasi-array.
\end{proof}

The following result is immediate, and essentially says that $\tc_k$ and $\td_k$ are mutually inverse when they are
defined.

\begin{lemma}
  Let $k \in \nset$ and $Q \in \setqa$.
  \begin{enumerate}
    \item If $\tc_k$ is defined on $Q$, then $\td_k$ is defined on $\tc_k(Q)$ and $\td_k\tc_k(Q) = Q$.
    \item If $\td_k$ is defined on $Q$, then $\tc_k$ is defined on $\td_k(Q)$ and $\tc_k\td_k(Q) = Q$.
  \end{enumerate}
\end{lemma}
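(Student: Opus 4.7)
The plan is to observe that on the level of entries the claim is trivial: by \fullref{Definition}{def:qa_ops}, both $\tc_k$ and $\td_k$ modify only the entries of the $k$-th diagonal, the former subtracting $1$ from each such entry and the latter adding $1$, and they leave all other cells untouched. Hence, whenever both compositions $\td_k\tc_k$ and $\tc_k\td_k$ are defined on $Q$, each entry is returned to its original value, so the equalities $\td_k\tc_k(Q) = Q$ and $\tc_k\td_k(Q) = Q$ will be immediate. The only substantive content is to verify that the domain condition for the second operator is automatically met after applying the first.

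For part (1), I would split on whether $k=1$, $2 \leq k \leq m-1$, or $k = m$, where $m$ is the size of $Q$. In each case, $\tc_k$ being defined on $Q$ gives a strict inequality (either $Q_{(1,1)} > 1$ or $Q_{(1,k-1)} < Q_{(1,k)}$), and I would check that this forces the corresponding definedness condition for $\td_k$ on $\tc_k(Q)$: if $k < m$, this reduces to checking $\tc_k(Q)_{(1,k)} < \tc_k(Q)_{(1,k+1)}$, which follows from $Q_{(1,k)} \leq Q_{(1,k+1)}$ (condition (A2) for $Q$) after subtracting $1$ from the left-hand entry; while if $k = m$, then $\td_m$ is unconditionally defined and there is nothing to check. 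Part (2) is symmetric: the definedness of $\td_k$ on $Q$ (whether unconditional when $k=m$ or the inequality $Q_{(1,k)} < Q_{(1,k+1)}$ when $k<m$) combined with (A2) forces the inequality $\td_k(Q)_{(1,k-1)} < \td_k(Q)_{(1,k)}$ (or $\td_k(Q)_{(1,1)} > 1$ when $k=1$) needed for $\tc_k$ to be defined on $\td_k(Q)$.

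No step here is really an obstacle; the argument is a routine case analysis that is essentially a re-reading of \fullref{Definition}{def:qa_ops} together with condition (A2) from \fullref{Definition}{def:quasiarray}. The only mild subtlety is remembering that the definedness conditions for $\tc_k$ and $\td_k$ concern \emph{different} adjacent pairs of first-row entries, namely $(k-1,k)$ versus $(k,k+1)$, so that checking the inverse operator's condition requires looking one position over from the inequality one is given. With that observation the verification is straightforward, and \fullref{Lemma}{lem:diagonal-operators-closure} already guarantees that the results remain in $\setqa$.
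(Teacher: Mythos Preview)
Your proposal is correct. The paper in fact gives no proof at all, stating only that the result ``is immediate''; your write-up is exactly the routine verification one would do to justify that claim, and each step (the case split on $k$, the use of condition~(A2) to pass from the weak inequality to the strict inequality needed for definedness of the inverse operator, and the observation that the two operators cancel entry-wise) is sound.
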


The following result establishes the connection between the $\tc_k$ and $\td_k$ and the quasi-Kashiwara operators.

\begin{proposition}
  \label{prop:diagonal-quasi-kashiwara-operators}
  Let $Q \in \setqa$ be of size $m$. Let $\sigma$ be a composition of $m$. Let $k \in \set{1,\ldots,m}$. Suppose the
  $k$-th diagonal of $Q$ intersects $T = \pickqrt{Q}{\sigma}$ at a cell containing $\ell$. Then
  \begin{enumerate}
    \item $\tc_k$ is defined on $Q$ if and only if $\e_{\ell-1}$ is defined on $T$ and alters the symbol $\ell$ on the
          $k$-th diagonal of $T$. In this case, $\e_{\ell-1}(T) = \pickqrt{\tc_k(Q)}{\sigma}$.
    \item $\td_k$ is defined on $Q$ if and only if $\f_\ell$ is defined on $T$ and alters the symbol $\ell$ on the
          $k$-th diagonal of $T$. In this case, $\f_{\ell}(T) = \pickqrt{\td_k(Q)}{\sigma}$.
  \end{enumerate}
\end{proposition}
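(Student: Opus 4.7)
The plan is a direct combinatorial comparison of the diagonal operators on $Q$ with the quasi-Kashiwara operators on $T$, organised around the possible positions of $\ell$ and $\ell-1$ in $T$. First I would fix notation: let $i_0$ be the row of $T$ whose cell lies on the $k$-th diagonal, so by axiom (A3), $\ell = Q_{(i_0, k-i_0+1)} = Q_{(1,k)} + i_0 - 1$. Let $[k_1, k_2]$ be the maximal interval containing $k$ on which $Q_{(1,\cdot)}$ is constantly equal to $Q_{(1,k)}$, and let $[k_{\min}, k_{\max}]$ be the block of diagonals covered by row $i_0$ of $T$ (determined by the partial sums of $\sigma$). Since the entry of row $i$ of $T$ on diagonal $j$ equals $Q_{(1,j)} + i - 1$, the cells of $T$ with entry $\ell$ form a contiguous block in row $i_0$ occupying the diagonals of $[k_1, k_2] \cap [k_{\min}, k_{\max}]$; monotonicity of the first row of $Q$ then forces every row strictly below $i_0$ to hold entries $> \ell$, so no $\ell$ appears in any other row.

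For the first claim, the key point is to locate the forbidden subwords $\ell(\ell-1)$ in $\colreading{T}$. The leftmost $\ell$ in that column reading sits in the leftmost column of $T$ containing $\ell$, i.e.\ on diagonal $\max(k_1, k_{\min})$. Tracking where $\ell-1$ can appear in $T$ via the same row-entry formula, one sees that $\ell-1$ can occur only in row $i_0$ (strictly to the left of the $\ell$-block) or in row $i_0-1$. In row $i_0$ it creates no obstruction; in row $i_0-1$ it creates the subword $\ell(\ell-1)$ precisely when it sits in the column $c^*_{i_0}$ shared between rows $i_0-1$ and $i_0$, with an $\ell$ directly below it. A short computation with axiom (A3) shows this shared-column configuration is equivalent to $Q_{(1, k_{\min}-1)} = Q_{(1,k)}$, i.e.\ to $k_1 < k_{\min}$. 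It follows that $\e_{\ell-1}$ is defined and alters the entry on diagonal $k$ exactly when $k_1 \geq k_{\min}$, $k = k_1$, and $\ell > 1$ --- conditions that collapse to $k = k_1$ and $\ell > 1$, which is in turn precisely the defining condition of $\tc_k$ being defined on $Q$.

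For the equation, $\tc_k(Q)$ differs from $Q$ only along the $k$-th diagonal, and only the cell $(i_0, k-i_0+1)$ of that diagonal lies on the ribbon; hence $\pickqrt{\tc_k(Q)}{\sigma}$ differs from $T$ solely at that cell, where the entry is now $\ell-1$. That cell is also the position of the leftmost $\ell$ in $\colreading{T}$, so $\e_{\ell-1}$ implements the same modification. The resulting filling still satisfies the row and column monotonicity conditions of a quasi-ribbon tableau (routinely verified against the quasi-array axioms), so the identity $\phypo{\colreading{T'}} = T'$ valid for every quasi-ribbon tableau $T'$ gives $\e_{\ell-1}(T) = \pickqrt{\tc_k(Q)}{\sigma}$. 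The statement for $\td_k$ and $\f_\ell$ is proved by a dual argument: replace ``leftmost'' by ``rightmost'', $k_1$ by $k_2$, and the shared column between rows $i_0-1$ and $i_0$ by the one between rows $i_0$ and $i_0+1$.

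The most delicate step is the exclusion argument: proving that the only configuration in $T$ producing a subword $\ell(\ell-1)$ in $\colreading{T}$ is the shared-column one described above. This combines the monotonicity of the first row of $Q$ (to forbid $\ell-1$ in rows below $i_0$) with the precise column-reading order in a quasi-ribbon tableau and the geometry of adjacent rows overlapping in exactly one column.
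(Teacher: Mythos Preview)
Your argument is correct and rests on the same underlying computation as the paper's: using axiom~(A3) to translate statements about entries $\ell$ and $\ell-1$ in $T$ into equalities and strict inequalities among the first-row entries $Q_{(1,\cdot)}$, and then matching these against the defining condition for $\tc_k$ (and dually $\td_k$). The paper does this locally, arguing cell-by-cell by contradiction (if there were an inversion then the cell $(i-1,j)$ above would force $Q_{(1,k)}=Q_{(1,k-1)}$; if the $\ell$ at $(i,j)$ were not leftmost then the cell $(i,j-1)$ to its left would force the same), whereas you package the same information globally via the intervals $[k_1,k_2]$ and $[k_{\min},k_{\max}]$ and then read off the equivalence $k=k_1$ directly. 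Your organisation makes the exclusion step (that the only possible $\ell(\ell-1)$ subword in $\colreading{T}$ arises at the shared column between rows $i_0-1$ and $i_0$) more transparent than the paper's somewhat compressed version, but no new idea is involved; both proofs are the same diagonal-to-first-row translation, just presented with different bookkeeping.
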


\begin{proof}
  Only part~(1) is proved here; part~(2) is similar.


  Suppose $\tc_k$ is defined on $Q$. Suppose $\colreading{T}$ contains an $\ell(\ell-1)$-inversion. Then, by the definition of a
  quasi-ribbon tableau, $Q_{(i-1,j)} = T_{(i-1,j)} = \ell-1$. Hence
  $Q_{(1,k)} = Q_{(1,i+j-1)} = Q_{(i-1,j)} - (i-2) = \ell-i+1$ and
  $Q_{(1,k-1)} = Q_{(1,i+j-2)} = Q_{(i,j-1)} -(i-1) = \ell-i+1$, which contradicts $\tc_k$ being defined on $Q$. Thus,
  since $T$ contains a symbol $\ell$, the operator $\e_{\ell-1}$ is defined on $T$.

  Now suppose that the symbol $\ell$ in cell $(i,j)$ is not the left-most such symbol in $T$. Then, by the definition of
  a quasi-ribbon tableau, $Q_{(i,j-1)} = T_{(i,j-1)} = \ell$. Hence,
  $Q_{(1,k)} = Q_{(1,i+j-1)} = Q_{(i,j)} - (i-1) = \ell-i+1$ and
  $Q_{(1,k-1)} = Q_{(1,i+j-2)} = Q_{(i,j-1)} - (i-1) = \ell-i+1$, which again contradicts $\tc_k$ being defined on $Q$.
  Hence $\e_{\ell-1}$ alters the symbol $\ell$ in the $(i,j)$-th cell of $T$.

  For the converse, suppose that $\e_{\ell-1}$ is defined on $T$ and alters the symbol $\ell$ in the $(i,j)$-th cell of
  $T$. Then the $(i,j)$-th cell contains the leftmost appearance of the symbol $\ell$ in $T$. Consider two cases:
  \begin{enumerate}
    \item Suppose that the $(i,j)$-th cell of $T$ does not lie at the leftmost end of a row of $T$. Then
          $Q_{(i,j-1)} = T_{(i,j-1)} < \ell$. Hence $Q_{(1,k)} = Q_{(1,i+j-1)} = Q_{(i,j)} - (i-1) = \ell-i+1$ and
          $Q_{(1,k-1)} = Q_{(1,i+j-2)} = Q_{(i,j-1)} - (i-1) < \ell-i+1$ and so $\tc_k$ is defined on $Q$.
    \item Suppose that the $(i,j)$-th cell of $T$ lies at the leftmost end of a row of $T$. Then the cell above cannot
          contain $\ell-1$, for otherwise $w$ would contain an $\ell(\ell-1)$-inversion. That is,
          $Q_{(i-1,j)} < \ell-1$. Hence $Q_{(1,k)} = Q_{(1,i+j-1)} = Q_{(i,j)} - (i-1) = \ell-i+1$ and
          $Q_{(1,k-1)} = Q_{(1,i+j-2)} = Q_{(i-1,j)} - (i-2) < \ell-i+1$ and so $\tc_k$ is defined on $Q$.
  \end{enumerate}

  Finally, suppose that $\tc_k$ is defined on $Q$ and $\e_{\ell-1}$ is defined on $T$ and alters the symbol $\ell$ on
  the $k$-th diagonal. Then since $\e_i(T)$ is obtained from $T$ by replacing the symbol $\ell$ on the $k$-th diagonal
  by $\ell-1$, and since $\tc_k(Q)$ is obtained from $Q$ by decreasing every entry on the $k$-th diagonal by $1$, it
  follows that $\e_{\ell-1}(T) = \pickqrt{\tc_k(Q)}{\sigma}$.
\end{proof}

The \defterm{quasi-array graph} $\Delta(\setqa)$ is defined to have vertex set $\setqa$ and, for each $Q \in \setqa$ and
$k \in \nset$, an edge from $Q$ to $\td_k(Q)$ labelled by $k$ if and only if $\td_k$ is defined on $Q$. Thus the
operator $\td_k$ corresponds to moving forward along edges and the operator $\tc_k$ corresponds to moving backward along
edges.

\begin{figure}[t]
  \begin{tikzpicture}[]
    \begin{scope}[bigcrystal,labelledcrystaledges,x=20mm,y=20mm]
      \begin{scope}
        \matrix[medtableaumatrix] (1111) at (0,0) {1 \& 1 \& 1 \& 1 \\ 2 \& 2 \& 2 \\ 3 \& 3 \\ 4 \\  };
        \matrix[medtableaumatrix] (1112) at ($ (1111) + (0,-1) $) {1 \& 1 \& 1 \& 2 \\ 2 \& 2 \& 3 \\ 3 \& 4 \\ 5 \\  };
        \matrix[medtableaumatrix] (1122) at ($ (1112) + (0,-1) $) {1 \& 1 \& 2 \& 2 \\ 2 \& 3 \& 3 \\ 4 \& 4 \\ 5 \\  };
        \matrix[medtableaumatrix] (1113) at ($ (1112) + (1,-1) $) {1 \& 1 \& 1 \& 3 \\ 2 \& 2 \& 4 \\ 3 \& 5 \\ 6 \\  };
        \matrix[medtableaumatrix] (1222) at ($ (1122) + (0,-1) $) {1 \& 2 \& 2 \& 2 \\ 3 \& 3 \& 3 \\ 4 \& 4 \\ 5 \\  };
        \matrix[medtableaumatrix] (1123) at ($ (1113) + (0,-1) $) {1 \& 1 \& 2 \& 3 \\ 2 \& 3 \& 4 \\ 4 \& 5 \\ 6 \\  };
        \matrix[medtableaumatrix] (2222) at ($ (1222) + (0,-1) $) {2 \& 2 \& 2 \& 2 \\ 3 \& 3 \& 3 \\ 4 \& 4 \\ 5 \\  };
        \matrix[medtableaumatrix] (1223) at ($ (1222) + (1,-1) $) {1 \& 2 \& 2 \& 3 \\ 3 \& 3 \& 4 \\ 4 \& 5 \\ 6 \\  };
        \matrix[medtableaumatrix] (1133) at ($ (1123) + (1,-1) $) {1 \& 1 \& 3 \& 3 \\ 2 \& 4 \& 4 \\ 5 \& 5 \\ 6 \\  };
        \matrix[medtableaumatrix] (2223) at ($ (2222) + (1,-1) $) {2 \& 2 \& 2 \& 3 \\ 3 \& 3 \& 4 \\ 4 \& 5 \\ 6 \\  };
        \matrix[medtableaumatrix] (1233) at ($ (1223) + (1,-1) $) {1 \& 2 \& 3 \& 3 \\ 3 \& 4 \& 4 \\ 5 \& 5 \\ 6 \\  };
        \matrix[medtableaumatrix] (2233) at ($ (2223) + (0,-1) $) {2 \& 2 \& 3 \& 3 \\ 3 \& 4 \& 4 \\ 5 \& 5 \\ 6 \\  };
        \matrix[medtableaumatrix] (1333) at ($ (1233) + (0,-1) $) {1 \& 3 \& 3 \& 3 \\ 4 \& 4 \& 4 \\ 5 \& 5 \\ 6 \\  };
        \matrix[medtableaumatrix] (2333) at ($ (2233) + (1,-1) $) {2 \& 3 \& 3 \& 3 \\ 4 \& 4 \& 4 \\ 5 \& 5 \\ 6 \\  };
        \matrix[medtableaumatrix] (3333) at ($ (2333) + (0,-1) $) {3 \& 3 \& 3 \& 3 \\ 4 \& 4 \& 4 \\ 5 \& 5 \\ 6 \\  };
        \begin{scope}[pos=.35]
          \draw[f4] (1111) to (1112);
          \draw[f3] (1112) to (1122);
          \draw[f4] (1112) to (1113);
          \draw[f2] (1122) to (1222);
          \draw[f4] (1122) to (1123);
          \draw[f3] (1113) to (1123);
          \draw[f1] (1222) to (2222);
          \draw[f4] (1222) to (1223);
          \draw[f2] (1123) to (1223);
          \draw[f3] (1123) to (1133);
          \draw[f4] (2222) to (2223);
          \draw[f1] (1223) to (2223);
          \draw[f3] (1223) to (1233);
          \draw[f2] (1133) to (1233);
          \draw[f3] (2223) to (2233);
          \draw[f1] (1233) to (2233);
          \draw[f2] (1233) to (1333);
          \draw[f2] (2233) to (2333);
          \draw[f1] (1333) to (2333);
          \draw[f1] (2333) to (3333);
        \end{scope}
      \end{scope}
    \end{scope}
  \end{tikzpicture}
  \caption{The connected component $\Delta(\setqa_3,4)$.}
  \label{fig:delta-t3-4}
\end{figure}

The induced subgraph of $\Delta(\setqa)$ containing quasi-arrays of size $m$ is denoted $\Delta(\setqa,m)$; it will be
seen shortly that each $\Delta(\setqa,m)$ is a connected component of $\Delta(\setqa)$. The graph $\Delta(\setqa_n)$ is
the induced subgraph of $\Delta(\setqa)$ with vertex set $\setqa_n$, and the connected component of $\Delta(\setqa_n)$
containing all quasi-arrays in $\setqa_n$ of size $m$ is denoted $\Delta(\setqa_n,m)$ (see
\fullref{Figure}{fig:delta-t3-4}).

\section{Connection to quasi-crystal graphs}\label{sec:connection_qc}
In this section we will introduce explicit isomorphisms between certain quasi-array graphs and quasi-crystal graphs.
This will allow us to determine when certain connected components of quasi-crystal graphs are isomorphic, as unlabelled graphs.
We will cover the infinite and finite rank cases separately, as the hypothesis are slightly different.


\subsection{Infinite-rank case}

The following result is immediate from \fullref{Proposition}{prop:diagonal-quasi-kashiwara-operators}:

\begin{theorem}
  \label{thm:isom-quasi-array-graph-quasi-crystal}
  For any composition $\sigma$ of $m$, the maps
  \begin{align*}
    \Phi_\sigma &: \Delta(\setqa,m) \to \Gamma(\hypo,\sigma),& Q &\mapsto \pickqrt{Q}{\sigma} \\
  \intertext{and}
    \genqalit|_{\Gamma(\hypo,\sigma)} &: \Gamma(\hypo,\sigma) \to \Delta(\setqa,m),& T &\mapsto \genqa{T}
  \end{align*}
  are mutually inverse unlabelled directed graph isomorphisms.
\end{theorem}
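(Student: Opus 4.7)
The plan is to deduce the theorem essentially directly from \fullref{Proposition}{prop:diagonal-quasi-kashiwara-operators}, which already does the real work. There are two pieces to verify: that the two maps are mutual inverses at the vertex level, and that they transport the edge structure faithfully (ignoring labels).

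For the bijection on vertices, I would invoke the two identities already recorded in \fullref{Section}{sec:quasiarrays}: namely $Q = \genqa{\pickqrt{Q}{\sigma}}$ for every size-$m$ quasi-array $Q$, and dually $T = \pickqrt{\genqa{T}}{\sigma}$, which is immediate from the construction of $\genqa{T}$ (one literally embeds $T$ in an empty size-$m$ quasi-array along shape $\sigma$ and fills the remaining diagonals). Together these say that $\Phi_\sigma$ and $\genqalit|_{\Gamma(\hypo,\sigma)}$ are mutually inverse as set maps between size-$m$ quasi-arrays and shape-$\sigma$ quasi-ribbon tableaux. Since the vertices of $\Gamma(\hypo,\sigma)$ are identified with the latter via column readings, and the vertices of $\Delta(\setqa,m)$ are by definition the former, this gives honest mutually inverse bijections between the stated vertex sets.

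For the edge correspondence, I would apply the proposition diagonal by diagonal. Because $\sigma$ is a composition of $m$, the quasi-ribbon $T = \pickqrt{Q}{\sigma}$ crosses each of the diagonals $1,\ldots,m$ of $Q$ in exactly one cell; let $\ell_k$ denote the entry of $T$ on the $k$-th diagonal. The proposition then states that $\td_k$ is defined on $Q$ if and only if $\f_{\ell_k}$ is defined on $T$, in which case $\Phi_\sigma(\td_k(Q)) = \f_{\ell_k}(T)$, with a symmetric statement for $\tc_k$ versus $\e_{\ell_k - 1}$. Running $k$ over $\{1,\ldots,m\}$ yields a direction-preserving bijection between the edges at $Q$ in $\Delta(\setqa,m)$ and the edges at $T$ in $\Gamma(\hypo)$. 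In particular $\Phi_\sigma$ is a directed-graph homomorphism, so its image lies in the single connected component $\Gamma(\hypo,\sigma)$ containing any one of its vertices; the same holds symmetrically for $\genqalit|_{\Gamma(\hypo,\sigma)}$.

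I do not anticipate any real obstacle here, since the nontrivial combinatorial compatibility is exactly the content of \fullref{Proposition}{prop:diagonal-quasi-kashiwara-operators}. The only subtlety worth flagging is that edge labels genuinely shift across the two sides (a $k$-labelled edge on the quasi-array side becomes an $\ell_k$-labelled edge on the quasi-crystal side), which is precisely why the statement is phrased in terms of unlabelled directed graphs rather than labelled ones.
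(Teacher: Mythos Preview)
Your proposal is correct and follows essentially the same approach as the paper, which simply states that the theorem is immediate from \fullref{Proposition}{prop:diagonal-quasi-kashiwara-operators}. You have in fact spelled out more detail than the paper does, making explicit the vertex-level inverse identities from \fullref{Section}{sec:quasiarrays} and the diagonal-by-diagonal edge correspondence; the only thing the paper adds is the parenthetical remark (which you also flag) that the label on an edge $Q \to Q'$ changes from $k$ to the entry $\ell_k$ under $\Phi_\sigma$.
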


Since $\Phi_\sigma$ is an unlabelled directed graph isomorphism, and $\Gamma(\hypo,\sigma)$ is a connected component of
$\Gamma(\hypo)$, it follows that $\Delta(\setqa,m)$ is a connected component of $\Delta(\setqa)$.

Note that the map $\Phi_\sigma$ in \fullref{Theorem}{thm:isom-quasi-array-graph-quasi-crystal} does not preserve labels
except in trivial cases: an edge $Q \cedge[k]{i} Q'$ maps to an edge
$\pickqrt{Q}{\sigma} \cedge[\ell]{i} \pickqrt{Q'}{\sigma}$ where $\ell$ is the entry of $\pickqrt{Q}{\sigma}$ that lies
on the $k$-th diagonal (see \fullref{Figure}{fig:isom-quasi-array-graph-quasi-crystal}).

\begin{figure}[t]
  \begin{tikzpicture}
    \begin{scope}[bigcrystal,labelledcolouredcrystaledges,x=20mm,y=20mm]
      \begin{scope}
        \matrix[medtableaumatrix] (1212) at (0,0) {1 \& 1 \\ \& 2 \\ \& 3 \\ };
        \matrix[medtableaumatrix] (1213) at ($ (1212) + (0,-1) $) {1 \& 1 \\ \& 2 \\ \& 4 \\ };
        \matrix[medtableaumatrix] (1313) at ($ (1213) + (0,-1) $) {1 \& 1 \\ \& 3 \\ \& 4 \\ };
        \matrix[medtableaumatrix] (1214) at ($ (1213) + (1,-1) $) {1 \& 1 \\ \& 2 \\ \& 5 \\ };
        \matrix[medtableaumatrix] (1323) at ($ (1313) + (0,-1) $) {1 \& 2 \\ \& 3 \\ \& 4 \\ };
        \matrix[medtableaumatrix] (1314) at ($ (1214) + (0,-1) $) {1 \& 1 \\ \& 3 \\ \& 5 \\ };
        \matrix[medtableaumatrix] (2323) at ($ (1323) + (0,-1) $) {2 \& 2 \\ \& 3 \\ \& 4 \\ };
        \matrix[medtableaumatrix] (1324) at ($ (1323) + (1,-1) $) {1 \& 2 \\ \& 3 \\ \& 5 \\ };
        \matrix[medtableaumatrix] (1414) at ($ (1314) + (1,-1) $) {1 \& 1 \\ \& 4 \\ \& 5 \\ };
        \matrix[medtableaumatrix] (2324) at ($ (2323) + (1,-1) $) {2 \& 2 \\ \& 3 \\ \& 5 \\ };
        \matrix[medtableaumatrix] (1424) at ($ (1324) + (1,-1) $) {1 \& 2 \\ \& 4 \\ \& 5 \\ };
        \matrix[medtableaumatrix] (2424) at ($ (2324) + (0,-1) $) {2 \& 2 \\ \& 4 \\ \& 5 \\ };
        \matrix[medtableaumatrix] (1434) at ($ (1424) + (0,-1) $) {1 \& 3 \\ \& 4 \\ \& 5 \\ };
        \matrix[medtableaumatrix] (2434) at ($ (2424) + (1,-1) $) {2 \& 3 \\ \& 4 \\ \& 5 \\ };
        \matrix[medtableaumatrix] (3434) at ($ (2434) + (0,-1) $) {3 \& 3 \\ \& 4 \\ \& 5 \\ };
        \draw[f3] (1212) to (1213);
        \draw[f2] (1213) to (1313);
        \draw[f4] (1213) to (1214);
        \draw[f1] (1313) to (1323);
        \draw[f4] (1313) to (1314);
        \draw[f2] (1214) to (1314);
        \draw[f1] (1323) to (2323);
        \draw[f4] (1323) to (1324);
        \draw[f1] (1314) to (1324);
        \draw[f3] (1314) to (1414);
        \draw[f4] (2323) to (2324);
        \draw[f1] (1324) to (2324);
        \draw[f3] (1324) to (1424);
        \draw[f1] (1414) to (1424);
        \draw[f3] (2324) to (2424);
        \draw[f1] (1424) to (2424);
        \draw[f2] (1424) to (1434);
        \draw[f2] (2424) to (2434);
        \draw[f1] (1434) to (2434);
        \draw[f2] (2434) to (3434);
      \end{scope}
      \begin{scope}
        \matrix[medtableaumatrix] (1121) at (4,0) {1 \& 1 \& 1 \\ \& \& 2 \\ };
        \matrix[medtableaumatrix] (1131) at ($ (1121) + (0,-1) $) {1 \& 1 \& 1 \\ \& \& 3 \\ };
        \matrix[medtableaumatrix] (1132) at ($ (1131) + (0,-1) $) {1 \& 1 \& 2 \\ \& \& 3 \\ };
        \matrix[medtableaumatrix] (1141) at ($ (1131) + (1,-1) $) {1 \& 1 \& 1 \\ \& \& 4 \\ };
        \matrix[medtableaumatrix] (1232) at ($ (1132) + (0,-1) $) {1 \& 2 \& 2 \\ \& \& 3 \\ };
        \matrix[medtableaumatrix] (1142) at ($ (1141) + (0,-1) $) {1 \& 1 \& 2 \\ \& \& 4 \\ };
        \matrix[medtableaumatrix] (2232) at ($ (1232) + (0,-1) $) {2 \& 2 \& 2 \\ \& \& 3 \\ };
        \matrix[medtableaumatrix] (1242) at ($ (1142) + (0,-1) $) {1 \& 2 \& 2 \\ \& \& 4 \\ };
        \matrix[medtableaumatrix] (1143) at ($ (1142) + (1,-1) $) {1 \& 1 \& 3 \\ \& \& 4 \\ };
        \matrix[medtableaumatrix] (2242) at ($ (2232) + (1,-1) $) {2 \& 2 \& 2 \\ \& \& 4 \\ };
        \matrix[medtableaumatrix] (1243) at ($ (1242) + (1,-1) $) {1 \& 2 \& 3 \\ \& \& 4 \\ };
        \matrix[medtableaumatrix] (2243) at ($ (2242) + (0,-1) $) {2 \& 2 \& 3 \\ \& \& 4 \\ };
        \matrix[medtableaumatrix] (1343) at ($ (1243) + (0,-1) $) {1 \& 3 \& 3 \\ \& \& 4 \\ };
        \matrix[medtableaumatrix] (2343) at ($ (2243) + (1,-1) $) {2 \& 3 \& 3 \\ \& \& 4 \\ };
        \matrix[medtableaumatrix] (3343) at ($ (2343) + (0,-1) $) {3 \& 3 \& 3 \\ \& \& 4 \\ };
        \draw[f2] (1121) to (1131);
        \draw[f1] (1131) to (1132);
        \draw[f3] (1131) to (1141);
        \draw[f1] (1132) to (1232);
        \draw[f3] (1132) to (1142);
        \draw[f1] (1141) to (1142);
        \draw[f1] (1232) to (2232);
        \draw[f3] (1232) to (1242);
        \draw[f1] (1142) to (1242);
        \draw[f2] (1142) to (1143);
        \draw[f3] (2232) to (2242);
        \draw[f1] (1242) to (2242);
        \draw[f2] (1242) to (1243);
        \draw[f1] (1143) to (1243);
        \draw[f2] (2242) to (2243);
        \draw[f1] (1243) to (2243);
        \draw[f2] (1243) to (1343);
        \draw[f2] (2243) to (2343);
        \draw[f1] (1343) to (2343);
        \draw[f2] (2343) to (3343);
      \end{scope}
    \end{scope}
  \end{tikzpicture}
  \caption{The connected components $\Gamma(\hypo_5,1321)$ and $\Gamma(\hypo_4,1121)$ (which respectively comprise
    quasi-ribbon tableaux in $\hypo_5$ of shape $(2,1,1)$ and quasi-ribbon tableaux in $\hypo_4$ of shapes $(3,1)$).}
  \label{fig:isom-quasi-array-graph-quasi-crystal}
\end{figure}

\begin{corollary}
  \label{corol:isomorphism-unlabelled}
  Let $\sigma$ and $\tau$ be compositions of the same natural number. Then the map
  \begin{equation}
    \label{eq:isomorphism-unlabelled}
    \Psi_{\sigma,\tau} : \Gamma(\hypo,\sigma) \to \Gamma(\hypo,\tau), \qquad T \mapsto \pickqrt{\genqa{T}}{\tau}
  \end{equation}
  is an unlabelled directed graph isomorphism.
\end{corollary}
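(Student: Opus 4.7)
The plan is to observe that the map $\Psi_{\sigma,\tau}$ is nothing more than the composition of two isomorphisms already established by \fullref{Theorem}{thm:isom-quasi-array-graph-quasi-crystal}, so no further combinatorial work is required. Specifically, for every $T \in \Gamma(\hypo,\sigma)$,
\[
  \Psi_{\sigma,\tau}(T) = \pickqrt{\genqa{T}}{\tau} = \Phi_\tau\parens{\genqa{T}} = \bigl(\Phi_\tau \circ \genqalit|_{\Gamma(\hypo,\sigma)}\bigr)(T).
\]
That is, $\Psi_{\sigma,\tau} = \Phi_\tau \circ \genqalit|_{\Gamma(\hypo,\sigma)}$.

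By \fullref{Theorem}{thm:isom-quasi-array-graph-quasi-crystal}, the map $\genqalit|_{\Gamma(\hypo,\sigma)} : \Gamma(\hypo,\sigma) \to \Delta(\setqa,m)$ is an unlabelled directed graph isomorphism (it is the inverse of $\Phi_\sigma$), and likewise $\Phi_\tau : \Delta(\setqa,m) \to \Gamma(\hypo,\tau)$ is an unlabelled directed graph isomorphism. Since the composition of isomorphisms of unlabelled directed graphs is itself an isomorphism of unlabelled directed graphs, $\Psi_{\sigma,\tau}$ is such an isomorphism. Moreover, interchanging the roles of $\sigma$ and $\tau$, the map $\Psi_{\tau,\sigma}$ is the two-sided inverse of $\Psi_{\sigma,\tau}$, since $\genqa{\pickqrt{Q}{\tau}} = Q$ for any $Q \in \Delta(\setqa,m)$ by the identity $Q = \genqa{\pickqrt{Q}{\sigma}}$ noted earlier in the paper.

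There is essentially no obstacle to overcome: the entire content of the corollary has been absorbed into \fullref{Theorem}{thm:isom-quasi-array-graph-quasi-crystal}, and the proof reduces to writing down the factorisation and invoking the theorem. The only conceptual point worth making explicit is that labels are \emph{not} preserved under this correspondence, which is consistent with the remark following \fullref{Theorem}{thm:isom-quasi-array-graph-quasi-crystal}: an edge $T \cedge[\ell]{i} T'$ of $\Gamma(\hypo,\sigma)$ whose quasi-array edge lies on the $k$-th diagonal is mapped to an edge $\Psi_{\sigma,\tau}(T) \cedge[\ell']{i} \Psi_{\sigma,\tau}(T')$ of $\Gamma(\hypo,\tau)$, where $\ell'$ is the entry of $\pickqrt{\genqa{T}}{\tau}$ sitting on the same $k$-th diagonal; in general $\ell \neq \ell'$.
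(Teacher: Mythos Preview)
Your proof is correct and follows essentially the same approach as the paper: factor $\Psi_{\sigma,\tau}$ as $\Phi_\tau \circ \genqalit|_{\Gamma(\hypo,\sigma)}$ and invoke \fullref{Theorem}{thm:isom-quasi-array-graph-quasi-crystal} for each factor. Your additional remarks about the explicit inverse $\Psi_{\tau,\sigma}$ and the non-preservation of labels are accurate but not needed for the corollary itself.
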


\begin{proof}
  Suppose $\sigma$ and $\tau$ are both compositions of $m \in \nset$. Then by
  \fullref{Theorem}{thm:isom-quasi-array-graph-quasi-crystal}, the maps
  \begin{align*}
    \genqalit|_{\Gamma(\hypo,\sigma)} & : \Gamma(\hypo,\sigma) \to \Delta(\setqa,m) \\
    \Phi_\tau                         & : \Delta(\setqa,m) \to \Gamma(\hypo,\tau)
  \end{align*}
  are unlabelled directed graph isomorphisms. Their composition is $\Psi_{\sigma,\tau}$, which is thus also an
  unlabelled directed graph isomorphism.
\end{proof}

%


The following results are converses of \fullref{Theorem}{thm:isom-quasi-array-graph-quasi-crystal} and
\fullref{Corollary}{corol:isomorphism-unlabelled}, respectively:


\begin{proposition}
  \label{prop:isom-quasi-array-graph-quasi-crystal-converse}
  Let $m_1,m_2 \in \nset$. If $\Delta(\setqa,m_1)$ and $\Delta(\setqa,m_2)$ are isomorphic as (unlabelled) directed
  graphs, then $m_1 = m_2$.
\end{proposition}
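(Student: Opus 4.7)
The plan is to identify an unlabelled directed graph invariant of $\Delta(\setqa, m)$ that recovers $m$. The natural candidate is the maximum out-degree of a vertex, which I claim equals $m$ exactly.

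To compute out-degrees, fix a quasi-array $Q$ of size $m$ in $\Delta(\setqa, m)$ and write its first row as $(a_1, \ldots, a_m)$. By \fullref{Definition}{def:qa_ops}, the edges leaving $Q$ correspond bijectively to indices $k$ for which $\td_k$ is defined on $Q$. The operator $\td_m$ is always defined; for $1 \leq k \leq m - 1$, the operator $\td_k$ is defined precisely when $a_k < a_{k+1}$; and for $k > m$, the operator $\td_k$ is undefined. Hence the out-degree of $Q$ is
\[
1 + \abs{\set{k \in \set{1, \ldots, m-1} : a_k < a_{k+1}}},
\]
which is at most $m$. This bound is attained: the quasi-array of size $m$ whose first row is $(1, 2, \ldots, m)$ (extended to the remaining diagonals by condition (A3) of \fullref{Definition}{def:quasiarray}) has every consecutive pair in the first row a strict ascent, so all of $\td_1, \ldots, \td_m$ are defined on it. Therefore the maximum out-degree in $\Delta(\setqa, m)$ is exactly $m$.

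Since every vertex has finite out-degree (bounded by $m$), the maximum out-degree is a well-defined cardinal, and unlabelled directed graph isomorphisms preserve out-degrees vertex by vertex. Hence if $\Delta(\setqa, m_1)$ and $\Delta(\setqa, m_2)$ are isomorphic as unlabelled directed graphs, their maximum out-degrees coincide, giving $m_1 = m_2$. The only non-routine step is spotting the right invariant; once chosen, its value is read directly off the case analysis in \fullref{Definition}{def:qa_ops}.
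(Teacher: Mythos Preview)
Your proof is correct and follows essentially the same approach as the paper: both identify the maximum out-degree as the invariant, note it is at most $m$ because only $\td_1,\ldots,\td_m$ can act, and observe that this bound is attained by any quasi-array whose first row is strictly increasing. Your version is slightly more explicit in writing out the out-degree formula and naming a specific witness, but the argument is the same.
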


\begin{proof}
  Since the graph $\Delta(\setqa,m_1)$ describes the action of the partial operators $\td_i$ for
  $i \in \set{1,\ldots,m_1}$ on quasi-arrays of size $m_1$, it follows that the outdegree of a vertex in
  $\Delta(\setqa,m_1)$ must be less than or equal to $m_1$. There are vertices with this outdegree: all that is
  necessary for all the $\td_i$ to be defined is for there to be no equal entries in the first row. Hence the
  (unlabelled) directed graph structure of $\Delta(\setqa,m_1)$ determines $m_1$. The result is now immediate.
\end{proof}

\begin{corollary}
  \label{corol:isom-implies-equal-composition-size}
  Let $\sigma_1$ and $\sigma_2$ be compositions of $m_1$ and $m_2$. If $\Gamma(\hypo,\sigma_1)$ and
  $\Gamma(\hypo,\sigma_2)$ are isomorphic as (unlabelled) directed graphs, then $m_1 = m_2$.
\end{corollary}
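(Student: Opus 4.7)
The plan is to deduce this directly from the two immediately preceding results. Specifically, by \fullref{Theorem}{thm:isom-quasi-array-graph-quasi-crystal}, for $j \in \set{1,2}$ the map $\genqalit|_{\Gamma(\hypo,\sigma_j)} : \Gamma(\hypo,\sigma_j) \to \Delta(\setqa,m_j)$ is an unlabelled directed graph isomorphism. Thus any assumed unlabelled directed graph isomorphism $\Gamma(\hypo,\sigma_1) \to \Gamma(\hypo,\sigma_2)$ can be pre- and post-composed with these isomorphisms to produce an unlabelled directed graph isomorphism $\Delta(\setqa,m_1) \to \Delta(\setqa,m_2)$.

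From here, \fullref{Proposition}{prop:isom-quasi-array-graph-quasi-crystal-converse} applies verbatim and yields $m_1 = m_2$. So the proof reduces to writing down this three-line composition argument; there is no real obstacle, since the heavy lifting (showing that the size $m$ is recoverable from the unlabelled digraph structure of $\Delta(\setqa,m)$, via the maximal outdegree) has already been done in the proposition. The main conceptual point worth emphasising in the write-up is that the isomorphisms supplied by \fullref{Theorem}{thm:isom-quasi-array-graph-quasi-crystal} are isomorphisms of \emph{unlabelled} directed graphs, so the hypothesis of unlabelled isomorphism between the two quasi-crystal components transports cleanly to the two quasi-array graphs without any loss of information.
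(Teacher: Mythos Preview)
Your proposal is correct and follows essentially the same approach as the paper: use \fullref{Theorem}{thm:isom-quasi-array-graph-quasi-crystal} to transport the assumed isomorphism $\Gamma(\hypo,\sigma_1) \cong \Gamma(\hypo,\sigma_2)$ to an isomorphism $\Delta(\setqa,m_1) \cong \Delta(\setqa,m_2)$, and then invoke \fullref{Proposition}{prop:isom-quasi-array-graph-quasi-crystal-converse}. The paper's own proof is in fact just a one-sentence version of exactly this argument.
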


\begin{proof}
  Since $\Gamma(\hypo,\sigma_1)$ is isomorphic to $\Delta(\setqa,m_1)$, and $\Gamma(\hypo,\sigma_2)$ is isomorphic to
  $\Delta(\setqa,m_2)$ by \fullref{Theorem}{thm:isom-quasi-array-graph-quasi-crystal}, the result follows immediately
  from \fullref{Proposition}{prop:isom-quasi-array-graph-quasi-crystal-converse}.
\end{proof}

\subsection{Finite-rank case}

The maximum entry in $\pickqrt{Q}{\sigma}$ is the maximum entry in the $\cwt\sigma$-th row of $Q$. Hence if $Q$ lies in
$\setqa_n$, then $\pickqrt{Q}{\sigma}$ lies in $\hypo_{n+\cwt\sigma-1}$. Hence there are similar isomorphisms of
unlabelled directed graphs between $\Delta(\setqa_n,m)$ and $\Gamma(\hypo_{n+\cwt\sigma-1},\sigma)$, and so there hold the
following finite-rank analogues of \fullref{Theorem}{thm:isom-quasi-array-graph-quasi-crystal} and
\fullref{Corollary}{corol:isomorphism-unlabelled}:

\begin{theorem}
  \label{thm:isom-quasi-array-graph-quasi-crystal-finite-rank}
  For any composition $\sigma$ of $m$, the maps
  \begin{align*}
    \Phi_\sigma &: \Delta(\setqa_n,m) \to \Gamma(\hypo_{n+\cwt\sigma-1},\sigma),& Q &\mapsto \pickqrt{Q}{\sigma} \\
  \intertext{and}
    \genqalit|_{\Gamma(\hypo_{n+\cwt\sigma-1},\sigma)} &: \Gamma(\hypo_{n+\cwt\sigma-1},\sigma) \to \Delta(\setqa_n,m),& T &\mapsto \genqa{T}
  \end{align*}
  are mutually inverse unlabelled directed graph isomorphisms.
\end{theorem}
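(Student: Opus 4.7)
The plan is to reduce this to the infinite-rank case \fullref{Theorem}{thm:isom-quasi-array-graph-quasi-crystal}, which is an immediate consequence of \fullref{Proposition}{prop:diagonal-quasi-kashiwara-operators}. That proposition establishes the correspondence between actions of $\td_k,\tc_k$ on quasi-arrays and actions of $\f_\ell,\e_{\ell-1}$ on quasi-ribbon tableaux independently of any rank restriction, so the only new input required is to verify that the bound defining $\setqa_n$ on one side matches the bound defining $\Gamma(\hypo_{n+\cwt\sigma-1},\sigma)$ on the other side.

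The main step is the vertex-level equivalence
\[
  Q \in \setqa_n \iff \pickqrt{Q}{\sigma} \in \Gamma(\hypo_{n+\cwt\sigma-1},\sigma).
\]
The forward direction is exactly the observation preceding the theorem statement: the maximum entry of $\pickqrt{Q}{\sigma}$, which lies at the bottom-right of the quasi-ribbon and therefore on the $m$-th diagonal of $Q$, is bounded by $Q_{(1,m)}$ plus the diagonal offset, hence by $n + \cwt\sigma - 1$. The reverse direction runs the same diagonal-offset computation on $\genqa{T}$: the rightmost entry of the first row of $\genqa{T}$ equals the maximum entry of $T$ minus the diagonal offset, giving a value at most $n$, and condition (A2) propagates this bound across the whole first row. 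The mutual-inverse identities $\pickqrt{\genqa{T}}{\sigma} = T$ and $\genqa{\pickqrt{Q}{\sigma}} = Q$ were already established in \fullref{Section}{sec:quasiarrays}.

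Edge preservation then follows immediately, because both $\Delta(\setqa_n,m)$ and $\Gamma(\hypo_{n+\cwt\sigma-1},\sigma)$ are induced subgraphs of their unrestricted counterparts: an edge survives on one side exactly when both its endpoints survive, and the vertex-level correspondence synchronises this across the isomorphism. I expect the only step with any real content to be the diagonal-offset computation identifying the constant $\cwt\sigma - 1$; once that is in place, the rest is a mechanical restriction of the infinite-rank argument.
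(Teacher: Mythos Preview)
The proposal is correct and takes essentially the same approach as the paper. The paper's own argument is the single observation preceding the theorem statement---that the maximum entry of $\pickqrt{Q}{\sigma}$ lies in the $\cwt\sigma$-th row of $Q$, so $Q \in \setqa_n$ forces $\pickqrt{Q}{\sigma}$ into $\hypo_{n+\cwt\sigma-1}$---and your write-up simply makes explicit the converse direction and the induced-subgraph reasoning that the paper leaves to the reader.
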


\begin{corollary}
  \label{corol:isomorphism-unlabelled-finite-rank}
  Let $\sigma$ and $\tau$ be compositions of the same natural number. Then for any
  $n > \abs[\big]{\cwt\sigma - \cwt\tau}$, the map
  \begin{equation}
    \label{eq:isomorphism-unlabelled-finite-rank}
    \Psi_{\sigma,\tau} : \Gamma(\hypo_n,\sigma) \to \Gamma(\hypo_{n- \abs*{\cwt\sigma - \cwt\tau}},\tau), \qquad T \mapsto \pickqrt{\genqa{T}}{\tau}
  \end{equation}
  is an unlabelled directed graph isomorphism.
\end{corollary}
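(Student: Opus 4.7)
The plan is to derive this corollary by composing the two unlabelled directed graph isomorphisms supplied by \fullref{Theorem}{thm:isom-quasi-array-graph-quasi-crystal-finite-rank}, exactly mirroring the proof of \fullref{Corollary}{corol:isomorphism-unlabelled} in the infinite-rank case. Let $m$ denote the integer of which both $\sigma$ and $\tau$ are compositions. First reduce to the case $\cwt\sigma \geq \cwt\tau$, so that $\abs{\cwt\sigma - \cwt\tau} = \cwt\sigma - \cwt\tau$; the opposite case is then handled by exchanging the roles of $\sigma$ and $\tau$ and inverting.

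The core step is to set $k = n - \cwt\sigma + 1$ and observe that both $n = k + \cwt\sigma - 1$ and $n - \abs{\cwt\sigma - \cwt\tau} = k + \cwt\tau - 1$. Then \fullref{Theorem}{thm:isom-quasi-array-graph-quasi-crystal-finite-rank} applied first to $\sigma$ and then to $\tau$ supplies two unlabelled directed graph isomorphisms,
\begin{align*}
  \genqalit|_{\Gamma(\hypo_n,\sigma)} &: \Gamma(\hypo_n,\sigma) \to \Delta(\setqa_k,m), & T &\mapsto \genqa{T}, \\
  \Phi_\tau &: \Delta(\setqa_k,m) \to \Gamma(\hypo_{n - \abs{\cwt\sigma - \cwt\tau}},\tau), & Q &\mapsto \pickqrt{Q}{\tau},
\end{align*}
whose composition sends $T \mapsto \pickqrt{\genqa{T}}{\tau}$. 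This is precisely the map $\Psi_{\sigma,\tau}$ of \eqref{eq:isomorphism-unlabelled-finite-rank}, and the composition of two unlabelled directed graph isomorphisms is itself such an isomorphism.

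No step poses a real obstacle: the technical content is entirely encapsulated in \fullref{Theorem}{thm:isom-quasi-array-graph-quasi-crystal-finite-rank}, and what remains is the arithmetic verification that the two invocations of that theorem share a common intermediate graph $\Delta(\setqa_k,m)$ precisely when the target rank is $n - \abs{\cwt\sigma - \cwt\tau}$. The hypothesis $n > \abs{\cwt\sigma - \cwt\tau}$ is used only to ensure that the target rank is at least $1$, so that the codomain quasi-crystal graph is well-defined.
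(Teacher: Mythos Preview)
Your proof is correct and follows exactly the approach the paper intends: the paper presents this corollary without an explicit proof, indicating only that it is the finite-rank analogue of \fullref{Corollary}{corol:isomorphism-unlabelled} obtained from \fullref{Theorem}{thm:isom-quasi-array-graph-quasi-crystal-finite-rank}, and you carry out precisely that composition of isomorphisms through the common intermediate $\Delta(\setqa_k,m)$ with $k = n - \cwt\sigma + 1$. Your reduction to the case $\cwt\sigma \geq \cwt\tau$ and the arithmetic bookkeeping are a helpful explicit addition to what the paper leaves implicit.
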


As a direct consequence, if $\sigma$ and $\tau$ have the same number of parts, we have the following finite-rank analogue of \fullref{Corollary}{corol:isomorphism-unlabelled}:

\begin{corollary}
  \label{corol:isom-compositions-finite-rank}
  Let $\sigma$ and $\tau$ be compositions of $m \in \nset$ with the same number of parts (that is, with
  $\cwt\sigma = \cwt\tau$). Then for any $n \in \nset$, the connected components $\Gamma(\hypo_n,\sigma)$ and
  $\Gamma(\hypo_n,\tau)$ are isomorphic as unlabelled directed graphs under the map
  \eqref{eq:isomorphism-unlabelled-finite-rank}.
\end{corollary}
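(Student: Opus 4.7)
The plan is simply to specialize Corollary \ref{corol:isomorphism-unlabelled-finite-rank} to the hypothesis $\cwt\sigma = \cwt\tau$. Under this assumption, we have $\abs{\cwt\sigma - \cwt\tau} = 0$, so the codomain $\Gamma(\hypo_{n - \abs{\cwt\sigma - \cwt\tau}}, \tau)$ in the previous corollary collapses to $\Gamma(\hypo_n, \tau)$, and the map \eqref{eq:isomorphism-unlabelled-finite-rank} becomes the desired map $\Psi_{\sigma,\tau} : \Gamma(\hypo_n,\sigma) \to \Gamma(\hypo_n,\tau)$.

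The only condition in Corollary \ref{corol:isomorphism-unlabelled-finite-rank} that needs verifying is the rank bound $n > \abs{\cwt\sigma - \cwt\tau}$. But with $\abs{\cwt\sigma - \cwt\tau} = 0$, this becomes $n > 0$, which holds for all $n \in \nset$. Thus the hypotheses of the previous corollary are satisfied for every rank $n$, and the conclusion transfers directly.

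There is no genuine obstacle: the statement is a one-line specialization of Corollary \ref{corol:isomorphism-unlabelled-finite-rank}. Conceptually, what happens is that when $\sigma$ and $\tau$ have the same number of parts, the maximum entry in $\pickqrt{Q}{\sigma}$ and in $\pickqrt{Q}{\tau}$ both lie in the same (namely, $\cwt\sigma = \cwt\tau$)-th row of the underlying quasi-array $Q \in \setqa_n$, so passing from one quasi-ribbon shape to the other does not change the ambient alphabet size. The rank is therefore preserved, and the isomorphism holds at every $n$.
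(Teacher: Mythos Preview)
Your proposal is correct and matches the paper's approach exactly: the paper introduces this corollary with the phrase ``As a direct consequence'' of Corollary~\ref{corol:isomorphism-unlabelled-finite-rank}, and does not supply any further argument. Your specialization $\cwt\sigma = \cwt\tau \Rightarrow \abs{\cwt\sigma - \cwt\tau} = 0$ is precisely the intended one-line deduction.
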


We recall the classical \emph{Schützenberger involution}, now restricted to quasi-crystal graphs \cite[\S~8.3]{cm_hypoplactic}, a map $\natural : \aA_n^* \to \aA_n^*$ that sends each letter $a \in \aA_n$ to $n-a+1$ and is extended to $\aA_n^*$ by $(a_1 \ldots a_k)^\natural = a_k^\natural \ldots a_1^\natural$.
Loosely speaking, the following result says that, viewed as unlabelled undirected graphs, connected components of
$\Gamma(\hypo_n)$ are `vertically symmetrical'.

\begin{corollary}
  \label{corol:vertically-symmetrical}
  For each connected component of $\Gamma(\hypo_n)$, there is an unlabelled undirected graph automorphism of that
  component that maps the unique highest weight vertex to the unique lowest weight vertex.
\end{corollary}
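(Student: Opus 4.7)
The plan is to combine the Schützenberger involution $\natural$ (recalled just above the statement) with the unlabelled directed graph isomorphism furnished by \fullref{Corollary}{corol:isom-compositions-finite-rank}.

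First, I would verify that $\natural$ respects the quasi-crystal structure, after reversing edge direction and relabelling $i \mapsto n-i$. Concretely, I would show that whenever $\f_i(u)$ is defined in $\Gamma(\hypo_n)$, with $v = \f_i(u)$, one has $\e_{n-i}(u^\natural) = v^\natural$, so that the edge $u \to v$ labelled $i$ corresponds to the edge $v^\natural \to u^\natural$ labelled $n-i$. This is a direct unpacking of the definitions of the quasi-Kashiwara operators: a subsequence $(i+1)i$ in $u$ corresponds, under the letter map $a \mapsto n-a+1$ composed with reversal, to a subsequence $(n-i+1)(n-i)$ in $u^\natural$, and the rightmost occurrence of $i$ in $u$ becomes the leftmost occurrence of $n-i+1$ in $u^\natural$. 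Forgetting both directions and labels, this makes $\natural$ into an unlabelled undirected graph involution of the whole of $\Gamma(\hypo_n)$.

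Second, I would identify which connected component contains $u^\natural$. Using the characterization of $\phypo{u}$ recalled in \fullref{Section}{sec:preliminaries} --- namely, that its shape is determined by the multiset of letters appearing in $u$ together with the pattern of consecutive subsequences $a_{i+1}a_i$ --- I would deduce that $\natural$ sends $\Gamma(\hypo_n, \sigma)$ to $\Gamma(\hypo_n, \sigma^{\mathrm{rev}})$, where $\sigma^{\mathrm{rev}}$ denotes the composition $\sigma$ read from right to left. Applying $\natural$ reverses the order of the letters used by a word, which in turn reverses both the multiplicity sequence and the pattern of descents, yielding the reversed composition. In particular, $\sigma$ and $\sigma^{\mathrm{rev}}$ have the same number of parts.

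Finally, \fullref{Corollary}{corol:isom-compositions-finite-rank} furnishes an unlabelled directed graph isomorphism $\Psi_{\sigma^{\mathrm{rev}},\sigma} : \Gamma(\hypo_n, \sigma^{\mathrm{rev}}) \to \Gamma(\hypo_n, \sigma)$. The composition $\Psi_{\sigma^{\mathrm{rev}},\sigma} \circ \natural$ is then an unlabelled undirected graph automorphism of $\Gamma(\hypo_n, \sigma)$, and I would check that it sends the unique highest weight vertex to the unique lowest weight vertex as follows: by the first step, $\natural$ sends the highest weight vertex (no incoming edges) of $\Gamma(\hypo_n, \sigma)$ to the lowest weight vertex (no outgoing edges) of $\Gamma(\hypo_n, \sigma^{\mathrm{rev}})$; and $\Psi_{\sigma^{\mathrm{rev}},\sigma}$, being induced by the quasi-array construction via the unique extremal quasi-array in $\Delta(\setqa_n, m)$, carries the lowest weight vertex of one component to the lowest weight vertex of the other. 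The main obstacle I anticipate is the careful bookkeeping in the first step: although the content is conceptually transparent, one must track how reversal and the letter map $a \mapsto n-a+1$ simultaneously interact with the leftmost/rightmost conditions and the $(i+1)i$ subsequence restriction defining $\f_i$ and $\e_{n-i}$.
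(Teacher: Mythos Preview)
Your proposal is correct and follows essentially the same approach as the paper: compose the Sch\"utzenberger involution $\natural$ (which the paper cites from \cite[Proposition~20]{cm_hypoplactic} rather than re-deriving) with the unlabelled directed isomorphism of \fullref{Corollary}{corol:isom-compositions-finite-rank}, using that $\phypo{u}$ and $\phypo{u^\natural}$ have shapes that are reversals of one another (equivalently, half-turn rotations) and hence have the same number of parts. Your explicit check that $\Psi_{\sigma^{\mathrm{rev}},\sigma}$ preserves lowest-weight vertices is a detail the paper leaves implicit but is indeed needed.
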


\begin{proof}
  Let $u,v \in \aA_n^*$. Since $\f_i(u) = v$ if and only if $\f_{n-i+1}(v^\natural) = u^\natural$, the Schützenberger
  involution ${\cdot} \mapsto {\cdot}^\natural$ induces an isomorphism of undirected graphs from $\Gamma(\hypo_n,u)$ to
  $\Gamma(\hypo_n,u^\natural)$ that maps the highest-weight vertex of $\Gamma(\hypo_n,u)$ to the lowest weight vertex of
  $\Gamma(\hypo_n,u^\natural)$ \cite[Propostion~20]{cm_hypoplactic}.

  Furthermore, the shape of $\phypo{u^\natural}$ is obtained by rotating the shape of $\phypo{u}$ through a half-turn.
  In particular, $\phypo{u}$ and $\phypo{u^\natural}$ have the same number of rows. Hence $\Gamma(\hypo_n,u)$ and
  $\Gamma(\hypo_n,u^\natural)$ are isomorphic as unlabelled directed graphs by
  \fullref{Corollary}{corol:isom-compositions-finite-rank}.

  Combining these two isomorphisms gives the required undirected graph automorphism of $\Gamma(\hypo_n,u)$.
\end{proof}

In the infinite-rank case, \fullref{Corollary}{corol:isom-implies-equal-composition-size} is a converse of
\fullref{Corollary}{corol:isomorphism-unlabelled}. The converse of the finite-rank analogue of the latter result,
\fullref{Corollary}{corol:isom-compositions-finite-rank}, does not hold, in the sense that the length of a quasi-ribbon
shape (that is, the size of a compositon $\sigma$) is not determined by the isomorphism type of the component
$\Gamma(\hypo_n,\sigma)$. This is immediate from the fact that every word $u \in \aA_n^*$ such that the quasi-ribbon
tableau $\phypo{u}$ has $n$ rows must contain an $(i+1)i$-inversion of all $i \in \set{1,\ldots,n-1}$ and so is an
isolated vertex in $\Gamma(\hypo_n)$.

\subsection{Isomorphisms and calculation}
\label{subsec:isomorphism-calculation}

Note that \fullref{Corollaries}{corol:isomorphism-unlabelled} and \ref{corol:isomorphism-unlabelled-finite-rank} say
respectively that connected components of $\Gamma(\hypo)$ and $\Gamma(\hypo_n)$ corresponding to quasi-ribbon shapes
with the same length are isomorphic as unlabelled directed graphs under the maps \eqref{eq:isomorphism-unlabelled} and
\eqref{eq:isomorphism-unlabelled-finite-rank}.

The existence of these isomorphisms is a consequence of a result of Maas-Gariépy
\cite[Theorem~2]{maasgariepy_quasicrystal}, but the notion of quasi-arrays gives an explicit isomorphism. Indeed,
quasi-arrays make it very easy to compute the map, without actually having to compute the whole quasi-array. For
instance, let $m = 5$, $\sigma = (1,4)$, $\tau = (2,1,1,1)$, and consider computing the image of a particular
quasi-ribbon tableau via computing the whole quasi-array:
\[
  \Phi_\tau\genqalit|_{\Gamma(\hypo_{9},\sigma)}\parens*{
    \onmathaxis{
      \begin{tikzpicture}[baseline=(maintableau-1-1.south)]
        \matrix[pickmatrix] (maintableau) {
          |[drawnode]| 2 \\
          |[drawnode]| 4 \& |[drawnode]| 4 \& |[drawnode]| 6 \& |[drawnode]| 9 \\
        };
      \end{tikzpicture}
    }
  }
  =
  \Phi_\tau\parens*{
    \onmathaxis{
      \begin{tikzpicture}[baseline=(maintableau-3-1.center)]
        \matrix[tableaumatrix] (maintableau) {
          2 \& 3 \& 3 \& 5 \& 8 \\
          4 \& 4 \& 6 \& 9 \\
          5 \& 7 \& 10 \\
          8 \& 11 \\
          12 \\
        };
      \end{tikzpicture}
    }
  }
  =
  \onmathaxis{
    \begin{tikzpicture}[baseline=(maintableau-2-1.south)]
      \matrix[pickmatrix] (maintableau) {
        |[drawnode]| 2 \& |[drawnode]| 3 \\
        \null \& |[drawnode]| 4 \\
        \null \& |[drawnode]| 7 \\
        \null \& |[drawnode]| 11 \\
      };
    \end{tikzpicture}
  }
\]
But all one has to do is consider the difference in distance between entries along the same diagonals and
subtract (on moving upwards and rightwards) or add (on moving downwards and leftwards):
\[
  \begin{tikzpicture}[baseline=(maintableau-1-1.base)]
    \matrix[pickmatrix] (maintableau) {
      |[drawnode]| 2 \& \null \\
      |[drawnode]| 4 \& |[drawnode]| 4 \& |[drawnode]| 6 \& |[drawnode]| 9 \\
      \null \& \null \\
      \null \& \null \\
    };
    \begin{scope}[
      ->,
      draw=gray,
      line cap=round,
      every node/.style={
        text=gray,
        inner sep=2pt,
        outer sep=0,
        node font=\footnotesize,
      }
      ]
      \draw (maintableau-2-1.center) -- node[pos=1,anchor=west,rotate=45] {$-1$} (maintableau-1-2.center);
      \draw (maintableau-2-3.center) -- node[pos=1,anchor=east,rotate=45] {$+1$} (maintableau-3-2.center);
      \draw (maintableau-2-4.center) -- node[pos=1,anchor=east,rotate=45] {$+2$} (maintableau-4-2.center);
    \end{scope}
  \end{tikzpicture}
  \leadsto
  \begin{tikzpicture}[baseline=(maintableau-1-1.base)]
    \matrix[pickmatrix] (maintableau) {
      |[drawnode]| 2 \& |[drawnode]| 3 \\
      \null \& |[drawnode]| 4 \\
      \null \& |[drawnode]| 7 \\
      \null \& |[drawnode]| 11 \\
    };
  \end{tikzpicture}
\]

\subsection{Loop version of quasi-crystals}

In the modified notion of quasi-crystal used in \cite{cgm_quasicrystals}, at each vertex containing an subsequence
$(i+1)i$ (where neither $\e_i$ nor $\f_i$ is defined) there is a loop labelled by $i$. With this kind of quasi-crystal,
the analogues of \fullref{Corollaries}{corol:isomorphism-unlabelled} and \ref{corol:isomorphism-unlabelled-finite-rank}
do not hold; see \fullref{Figure}{fig:inversion-loop-version}.

\begin{figure}[t]
  \begin{tikzpicture}
    \begin{scope}[bigcrystal,labelledcolouredcrystaledges]
      \begin{scope}
        \matrix[medtableaumatrix] (1121) at (0,0) {1 \& 1 \& 1 \\ \& \& 2 \\ };
        \matrix[medtableaumatrix] (1131) at ($ (1121) + (0,-1) $) {1 \& 1 \& 1 \\ \& \& 3 \\ };
        \matrix[medtableaumatrix] (1132) at ($ (1131) + (0,-1) $) {1 \& 1 \& 2 \\ \& \& 3 \\ };
        \matrix[medtableaumatrix] (1141) at ($ (1131) + (1,-1) $) {1 \& 1 \& 1 \\ \& \& 4 \\ };
        \begin{pgfinterruptboundingbox}
          \node[minimum size=10mm] (1232) at ($ (1132) + (0,-1) $) {};
          \node[minimum size=10mm] (1142) at ($ (1141) + (0,-1) $) {};
        \end{pgfinterruptboundingbox}
        \draw[f2] (1121) to (1131);
        \draw[f1] (1131) to (1132);
        \draw[f3] (1131) to (1141);
        \draw[f1] (1132) to (1232);
        \draw[f3] (1132) to (1142);
        \draw[f1] (1141) to (1142);
        \draw[f1,loop left] (1121) to (1121);
        \draw[f2,loop left] (1132) to (1132);
      \end{scope}
      \begin{scope}
        \matrix[medtableaumatrix] (1212) at (4,0) {1 \& 1 \\ \& 2 \& 2 \\ };
        \matrix[medtableaumatrix] (1213) at ($ (1212) + (0,-1) $) {1 \& 1 \\ \& 2 \& 3 \\ };
        \matrix[medtableaumatrix] (1313) at ($ (1213) + (0,-1) $) {1 \& 1 \\ \& 3 \& 3 \\ };
        \matrix[medtableaumatrix] (1214) at ($ (1213) + (1,-1) $) {1 \& 1 \\ \& 2 \& 4 \\ };
        \begin{pgfinterruptboundingbox}
          \node[minimum size=10mm] (1323) at ($ (1313) + (0,-1) $) {};
          \node[minimum size=10mm] (1314) at ($ (1214) + (0,-1) $) {};
        \end{pgfinterruptboundingbox}
        \draw[f2] (1212) to (1213);
        \draw[f2] (1213) to (1313);
        \draw[f3] (1213) to (1214);
        \draw[f1] (1313) to (1323);
        \draw[f3] (1313) to (1314);
        \draw[f2] (1214) to (1314);
        \draw[f1,loop left] (1212) to (1212);
        \draw[f1,loop left] (1213) to (1213);
        \draw[f1,loop right] (1214) to (1214);
      \end{scope}
    \end{scope}
  \end{tikzpicture}
  \caption{The connected components $\Gamma(\hypo_4,1121)$ and $\Gamma(\hypo_4,1212)$ using the notion of quasi-crystals
    used in \cite{cgm_quasicrystals}. The two shapes of quasi-ribbon tableaux have the same length, but the loops
    indicating the presence of $(i+1)i$-inversions mean that the graphs are not isomorphic as (unlabelled) directed
    graphs.}
  \label{fig:inversion-loop-version}
\end{figure}

\section{Geometry of the quasi-crystal graph}\label{sec:geometry}

The isomorphisms between connected components of $\Gamma(\hypo)$ or $\Gamma(\hypo_n)$ suggest a natural geometric
interpretation of connected components as lattices in $m$-dimensional space, where $m$ is the length of the words (or
quasi-ribbon tableaux) contained in that component.

When the quasi-ribbon tableau has the row shape $(m)$, one can view the entries of the quasi-ribbon tableau
$\tableau{x_1 \& x_2 \& |[dottedentry]| \cdots \& x_m \\}$ as coordinates $\parens{x_1,x_2,\ldots,x_m}$ in
$m$-dimensional space. In this case, the action of the operator $\f_i$, when defined, increases some coordinate $x_h$
from $i$ to $i+1$. Thus each edge between a vertex in the hyperplane $x_h = i$ and a vertex in the hyperplane
$x_h = i+1$ is labelled by $i$.

For $\Gamma(\hypo,(m))$, a lattice element $(x_1,x_2,\ldots,x_m)$ corresponds to a quasi-ribbon tableau if and only if
it satisfies $1 \leq x_1 \leq x_2 \leq \ldots \leq x_m$, which is equivalent to the coordinates being the entries of a
quasi-ribbon tableau with a single row. The (infinite) convex hull of lattice elements corresponding to quasi-ribbon
tableaux is bounded by the $n-1$ hyperplanes (each of dimension $n-1$) defined by $x_1 = 1$ and by $x_h = x_{h+1}$ for
some $h$.

For $\Gamma(\hypo_n,(m))$, a lattice element $(x_1,x_2,\ldots,x_m)$ corresponds to a quasi-ribbon tableau if and only if
it satisfies $1 \leq x_1 \leq x_2 \leq \ldots \leq x_m \leq n$. (Moving from the infinite-rank to the finite-rank case
adds the additional condition $x_m \leq n$.) The (finite) convex hull of lattice elements corresponding to quasi-ribbon
tableaux is bounded by the $n$ hyperplanes (each of dimension $n-1$) defined by $x_1 = 1$, by $x_h = x_{h+1}$ for some
$h$, and (unlike for $\Gamma(\hypo,(m))$) by $x_m = n$. \fullref{Figure}{fig:gamma-hypo-4-3} shows $\Gamma(\hypo_n,(m))$
as a lattice comprising points in three-dimensional space. In this example, the convex hull is tetrahedral, bounded by
four (two-dimensional) planes, on each of which there are ten lattice points corresponding to quasi-ribbon tableaux:
\begin{itemize}
  \item the plane $x_1 = 1$, which passes through the ten vertices of the form $\tableau{1 \& x_2 \& x_3 \\}$,
        $\tableau{1 \& x_2 \& x_3 \\}$, $\tableau{1 \& x_2 \& x_3 \\}$, $\tableau{1 \& x_2 \& x_3 \\}$ at the `front' of
        the diagram;
  \item the plane $x_1 = x_2$, which passes through the ten vertices of the form $\tableau{1 \& 1 \& x_3 \\}$,
        $\tableau{2 \& 2 \& x_3 \\}$, $\tableau{3 \& 3 \& x_3 \\}$, $\tableau{4 \& 4 \& x_3 \\}$ at the angled `back' of
        the diagram;
  \item the plane $x_2 = x_3$, which passes through the ten vertices of the form $\tableau{x_1 \& 1 \& 1 \\}$,
        $\tableau{x_1 \& 2 \& 2 \\}$, $\tableau{x_1 \& 3 \& 3 \\}$, $\tableau{x_1 \& 4 \& 4 \\}$ at the angled `top' of
        the diagram;
  \item the plane $x_3 = 4$, which passes through the ten vertices of the form $\tableau{x_1 \& x_2 \& 4 \\}$,
        $\tableau{x_1 \& x_2 \& 4 \\}$, $\tableau{x_1 \& x_2 \& 4 \\}$, $\tableau{x_1 \& x_2 \& 4 \\}$ at the `bottom'
        of the diagram.
\end{itemize}

\begin{figure}[t]
  \begin{tikzpicture}

    \begin{scope}[
      bigcrystal,
      labelledcolouredcrystaledges,
      x={(16mm,20mm)},
      y={(18mm,0)},
      z={(0mm,-14mm)},
      ]
      \begin{scope}

        \node (111) at (1,1,1) {\smalltableau{1 \& 1 \& 1 \\}};

        \node (112) at (1,1,2) {\smalltableau{1 \& 1 \& 2 \\}};
        \node (122) at (1,2,2) {\smalltableau{1 \& 2 \& 2 \\}};
        \node (222) at (2,2,2) {\smalltableau{2 \& 2 \& 2 \\}};

        \node (113) at (1,1,3) {\smalltableau{1 \& 1 \& 3 \\}};
        \node (123) at (1,2,3) {\smalltableau{1 \& 2 \& 3 \\}};
        \node (223) at (2,2,3) {\smalltableau{2 \& 2 \& 3 \\}};
        \node (133) at (1,3,3) {\smalltableau{1 \& 3 \& 3 \\}};
        \node (233) at (2,3,3) {\smalltableau{2 \& 3 \& 3 \\}};
        \node (333) at (3,3,3) {\smalltableau{3 \& 3 \& 3 \\}};

        \node (114) at (1,1,4) {\smalltableau{1 \& 1 \& 4 \\}};
        \node (124) at (1,2,4) {\smalltableau{1 \& 2 \& 4 \\}};
        \node (224) at (2,2,4) {\smalltableau{2 \& 2 \& 4 \\}};
        \node (134) at (1,3,4) {\smalltableau{1 \& 3 \& 4 \\}};
        \node (234) at (2,3,4) {\smalltableau{2 \& 3 \& 4 \\}};
        \node (334) at (3,3,4) {\smalltableau{3 \& 3 \& 4 \\}};
        \node (144) at (1,4,4) {\smalltableau{1 \& 4 \& 4 \\}};
        \node (244) at (2,4,4) {\smalltableau{2 \& 4 \& 4 \\}};
        \node (344) at (3,4,4) {\smalltableau{3 \& 4 \& 4 \\}};
        \node (444) at (4,4,4) {\smalltableau{4 \& 4 \& 4 \\}};

        \draw[f1] (111) to (112);

        \draw[f2] (112) to (113);
        \draw[f2] (122) to (123);
        \draw[f2] (222) to (223);

        \draw[f3] (113) to (114);
        \draw[f3] (123) to (124);
        \draw[f3] (223) to (224);
        \draw[f3] (133) to (134);
        \draw[f3] (233) to (234);
        \draw[f3] (333) to (334);

        \draw[f1] (112) to (122);
        \draw[f1] (113) to (123);
        \draw[f1] (114) to (124);

        \draw[f2] (123) to (133);
        \draw[f2] (223) to (233);
        \draw[f2] (124) to (134);
        \draw[f2] (224) to (234);

        \draw[f3] (134) to (144);
        \draw[f3] (234) to (244);
        \draw[f3] (334) to (344);

        \draw[f1] (122) to (222);
        \draw[f1] (123) to (223);
        \draw[f1] (133) to (233);
        \draw[f1] (124) to (224);
        \draw[f1] (134) to (234);
        \draw[f1] (144) to (244);

        \draw[f2] (233) to (333);
        \draw[f2] (234) to (334);
        \draw[f2] (244) to (344);

        \draw[f3] (344) to (444);

      \end{scope}
    \end{scope}
  \end{tikzpicture}
  \caption{The connected component $\Gamma(\hypo_4,(3))$, drawn as a three-dimensional lattice, placing each entry of
    the quasi-ribbon tableau $\tableau{x_1 \& x_2 \& x_3 \\}$ at the point $(x_1,x_2,x_3)$.}
  \label{fig:gamma-hypo-4-3}
\end{figure}

Similarly, one can place the vertices of $\Gamma(\hypo,\sigma)$ (where $\sigma$ is a composition of $m$) as lattice
elements by treating each entry of a coordinate. Furthermore, $\Gamma(\hypo,\sigma)$ is isomorphic as an unlabelled
directed graphs to $\Gamma(\hypo,(m))$, and the explicit description of the isomorphism in
\fullref{Subsection}{subsec:isomorphism-calculation} indicates that the entries in each vertex of can be obtained by
adding fixed amounts to the the entries of the corresponding vertex of $\Gamma(\hypo,(m))$. This corresponds to
shifting each lattice point by a fixed amount. Thus $\Gamma(\hypo,\sigma)$ is obtained by shifting $\Gamma(\hypo,(m))$
by a fixed amount in $m$-dimensional space, and relabelling edges when necessary so that an edge from a vertex in the
hyperplane $x_h = i$ to one in $x_h = i+1$ is labelled by $i$.

Applying an analogous shift and relabelling to $\Gamma(\hypo_n,(m))$ yields $\Gamma(\hypo_n,\sigma)$; see
\fullref{Figure}{fig:gamma-hypo-4-21}. This
argument proves the following result:

\begin{proposition}
	Let $m \geq 1$ and $n \geq 1$. The connected component
	$\Gamma(\hypo_n,(m))$ of the quasi-crystal graph is isomorphic to
	the labelled directed graph whose vertices are integer points in the
	convex polytope
	\[
	\mathcal{P}_{n,m} := \operatorname{conv}\left\{ (x_1, x_2, \ldots,
	x_m) \in \mathbb{Z}^m : 1 \leq x_1 \leq x_2 \leq \cdots \leq x_m \leq
	n \right\},
	\]
	with an edge labelled by $i$ from any vertex whose $h$-th coordinate
	is $i$ to the vertex obtained by replacing that coordinate by $i+1$,
	whenever the resulting point is in $\mathcal{P}_{n,m}$. More
	generally, the connected component $\Gamma(\hypo_n,\sigma)$, for any
	composition $\sigma$ of $m$, is isomorphic to the graph obtained by
	translating this one in $\mathbb{Z}^m$ and relabelling so that any
	edge from a point in the hyperplane $x_h = i$ to one in $x_h = i+1$
	is labelled by $i$.
\end{proposition}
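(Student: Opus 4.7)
The plan is to handle the two parts of the proposition separately: first the shape $\sigma = (m)$ by a direct lattice identification, then the general shape $\sigma$ by leveraging the quasi-array encoding from \fullref{Section}{sec:quasiarrays}.

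For the shape $(m)$ case, I would identify each quasi-ribbon tableau $\tableau{x_1 \& x_2 \& \cdots \& x_m\\}$ in $\qrt{(m)}$ with entries in $\aA_n$ with the point $(x_1, \ldots, x_m) \in \zset^m$. The condition of being a QRT of shape $(m)$ with entries in $\aA_n$ reduces to $1 \leq x_1 \leq \cdots \leq x_m \leq n$, giving exactly the integer points of $\mathcal{P}_{n,m}$. The column reading $x_1 \cdots x_m$ is weakly increasing, hence contains no $(i+1)i$-subsequence, so $\f_i$ is defined iff some $x_h = i$, and it replaces the rightmost such $x_h$ by $i+1$. The resulting tuple lies in $\mathcal{P}_{n,m}$ iff $x_{h+1} > i$ (automatic when $h$ is the rightmost occurrence of $i$) together with $i + 1 \leq n$; this matches the edge rule in the statement.

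For a general composition $\sigma = (\sigma_1, \ldots, \sigma_k)$ of $m$, I would encode each $T \in \qrt\sigma$ with entries in $\aA_n$ by the $m$-tuple $(y_1, \ldots, y_m) \in \zset^m$ of its entries read in row order (equivalently in order of increasing diagonal, since each of the $m$ diagonals of a quasi-ribbon of size $m$ meets $T$ in exactly one cell). Let $i(d)$ denote the row of $T$ containing the $d$-th diagonal, and set $r_d := i(d) - 1$; then $r_1 = 0$, $r_m = k - 1$, and $r_{d+1} - r_d \in \{0, 1\}$. \fullref{Definition}{def:quasiarray} gives $y_d = x_d + r_d$, where $(x_1, \ldots, x_m)$ is the first row of the quasi-array $\genqa T$. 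By \fullref{Theorem}{thm:isom-quasi-array-graph-quasi-crystal-finite-rank}, as $T$ varies over $\qrt\sigma$ with entries in $\aA_n$, the tuple $(x_1, \ldots, x_m)$ ranges over the integer points of $\mathcal{P}_{n-k+1, m}$, so $T \mapsto (y_1, \ldots, y_m)$ embeds the vertex set of $\Gamma(\hypo_n, \sigma)$ as the translate $\mathcal{P}_{n-k+1, m} + r$ of the polytope of the first part.

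For the edges I would invoke \fullref{Proposition}{prop:diagonal-quasi-kashiwara-operators}: applying $\f_\ell$ to $T$ corresponds to $\td_d$ on $\genqa T$, where $d$ is the diagonal containing the altered symbol, and $\td_d$ increments every entry on the $d$-th diagonal by $1$. In the coordinates $(y_1, \ldots, y_m)$ this is just incrementing $y_d$ by $1$; the label $\ell$ equals the original entry of $T$ on that diagonal, which is $y_d$. Thus each directed edge sends $(y_1, \ldots, y_m)$ to the tuple obtained by replacing a single coordinate $y_h = i$ by $i+1$ and is labelled by $i$, exactly the relabelling rule in the proposition.

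The main obstacle is bookkeeping rather than any real technical difficulty: pinning down the precise meaning of \emph{this one} in the statement. A literal translation of $\mathcal{P}_{n,m}$ would overshoot the upper bound $y_m \leq n$; the correct object is the translation of the sub-polytope $\mathcal{P}_{n-k+1, m} \subseteq \mathcal{P}_{n,m}$ by $r$. Once this is made precise, the edge-label compatibility follows immediately from \fullref{Proposition}{prop:diagonal-quasi-kashiwara-operators}, and the proof is essentially the formalization of the argument already sketched in the surrounding text.
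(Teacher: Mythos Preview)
Your proposal is correct and follows the same approach as the paper: the surrounding text in \fullref{Section}{sec:geometry} is itself the paper's argument, and it too handles the row-shape case by direct coordinate identification and the general case via the quasi-array shift from \fullref{Subsection}{subsec:isomorphism-calculation}. Your treatment is in fact more careful than the paper's, which leaves the phrase ``this one'' informal; your observation that the honest translate is $\mathcal{P}_{n-\ell(\sigma)+1,m} + r$ rather than $\mathcal{P}_{n,m} + r$ is exactly the precision the paper's sketch elides (and is consistent with \fullref{Corollary}{corol:isomorphism-unlabelled-finite-rank}).
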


\begin{figure}[t]
  \begin{tikzpicture}

    \begin{scope}[
      bigcrystal,
      labelledcolouredcrystaledges,
      x={(16mm,20mm)},
      y={(18mm,0)},
      z={(0mm,-14mm)},
      ]
      \begin{scope}

        \node (111) at (1,1,1) {\smalltableau{1 \& 1 \\ \& 2 \\}};

        \node (112) at (1,1,2) {\smalltableau{1 \& 1 \\ \& 3 \\}};
        \node (122) at (1,2,2) {\smalltableau{1 \& 2 \\ \& 3 \\}};
        \node (222) at (2,2,2) {\smalltableau{2 \& 2 \\ \& 3 \\}};

        \node (113) at (1,1,3) {\smalltableau{1 \& 1 \\ \& 4 \\}};
        \node (123) at (1,2,3) {\smalltableau{1 \& 2 \\ \& 4 \\}};
        \node (223) at (2,2,3) {\smalltableau{2 \& 2 \\ \& 4 \\}};
        \node (133) at (1,3,3) {\smalltableau{1 \& 3 \\ \& 4 \\}};
        \node (233) at (2,3,3) {\smalltableau{2 \& 3 \\ \& 4 \\}};
        \node (333) at (3,3,3) {\smalltableau{3 \& 3 \\ \& 4 \\}};

        \node (114) at (1,1,4) {\smalltableau{1 \& 1 \\ \& 5 \\}};
        \node (124) at (1,2,4) {\smalltableau{1 \& 2 \\ \& 5 \\}};
        \node (224) at (2,2,4) {\smalltableau{2 \& 2 \\ \& 5 \\}};
        \node (134) at (1,3,4) {\smalltableau{1 \& 3 \\ \& 5 \\}};
        \node (234) at (2,3,4) {\smalltableau{2 \& 3 \\ \& 5 \\}};
        \node (334) at (3,3,4) {\smalltableau{3 \& 3 \\ \& 5 \\}};
        \node (144) at (1,4,4) {\smalltableau{1 \& 4 \\ \& 5 \\}};
        \node (244) at (2,4,4) {\smalltableau{2 \& 4 \\ \& 5 \\}};
        \node (344) at (3,4,4) {\smalltableau{3 \& 4 \\ \& 5 \\}};
        \node (444) at (4,4,4) {\smalltableau{4 \& 4 \\ \& 5 \\}};

        \draw[f2] (111) to (112);

        \draw[f3] (112) to (113);
        \draw[f3] (122) to (123);
        \draw[f3] (222) to (223);

        \draw[f4] (113) to (114);
        \draw[f4] (123) to (124);
        \draw[f4] (223) to (224);
        \draw[f4] (133) to (134);
        \draw[f4] (233) to (234);
        \draw[f4] (333) to (334);

        \draw[f1] (112) to (122);
        \draw[f1] (113) to (123);
        \draw[f1] (114) to (124);

        \draw[f2] (123) to (133);
        \draw[f2] (223) to (233);
        \draw[f2] (124) to (134);
        \draw[f2] (224) to (234);

        \draw[f3] (134) to (144);
        \draw[f3] (234) to (244);
        \draw[f3] (334) to (344);

        \draw[f1] (122) to (222);
        \draw[f1] (123) to (223);
        \draw[f1] (133) to (233);
        \draw[f1] (124) to (224);
        \draw[f1] (134) to (234);
        \draw[f1] (144) to (244);

        \draw[f2] (233) to (333);
        \draw[f2] (234) to (334);
        \draw[f2] (244) to (344);

        \draw[f3] (344) to (444);




      \end{scope}
    \end{scope}
  \end{tikzpicture}
  \caption{The connected component $\Gamma(\hypo_4,(2,1))$, drawn as a three-dimensional lattice, placing each entry of
    the quasi-ribbon tableau $\smalltableau{x_1 \& x_2 \\ \& x_3 \\}$ at the point $(x_1,x_2,x_3)$. As a lattice, this
    component is obtained from $\Gamma(\hypo_4,(3))$ by applying a shift of $+(0,0,1)$ and relabelling with $i$ any edge
    from a vertex in the plane with $x_3=i$ to one in the plane $x_3 = i+1$. (The relabelling applies to the edges drawn
    vertically downwards in this diagram.)}
  \label{fig:gamma-hypo-4-21}
\end{figure}

\section{Schur functions and fundamental quasi-symmetric function}\label{sec:schur_fund}

Maas-Gariépy conjectured \cite[Conjecture 2.6]{maasgariepy_quasicrystal} the following result:

\begin{theorem}\label{thm:schur-reorder}
  Let $\alpha$ be a composition and let $\lambda$ be the partition obtained by sorting the parts of $\alpha$ into weakly
  decreasing order. Then the fundamental quasi-symmetric function $F_\alpha$ occurs in the Schur function $s_\lambda$.
\end{theorem}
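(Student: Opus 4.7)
The plan is to use Gessel's identity
\[
  s_\lambda = \sum_{T \in \syt{\lambda}} F_{\dc{T}},
\]
recalled at the end of Section~\ref{sec:preliminaries}. Since the fundamental quasi-symmetric functions form a basis of $\QSym$, it suffices to exhibit a single standard Young tableau of shape $\lambda$ whose descent composition equals $\alpha$.

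To construct such a tableau I would use RSK insertion. Let $n = \cwt\alpha$, write $s_i = \alpha_1 + \cdots + \alpha_i$ for $i = 0,\ldots,k$, and define a permutation $v$ of $\{1,\ldots,n\}$ as the concatenation of $k$ strictly increasing blocks, where the $i$-th block lists the integers $n - s_i + 1,\ldots, n - s_{i-1}$ in increasing order. Then the $i$-th block has length $\alpha_i$, and every value in block $i$ is strictly greater than every value in block $i+1$, so $v$ has descents exactly at the positions $s_1,\ldots,s_{k-1}$, i.e.\ its descent composition is precisely $\alpha$. Set $T := \qplac{v}$, the recording tableau of $v$ under the RSK correspondence.

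Two standard facts about RSK then close the argument. First, the descent set of $\qplac{v}$ coincides with the descent set of $v$, so $\dc{T} = \alpha$. Second, by Greene's theorem, $\lambda_1 + \cdots + \lambda_m$ equals the maximum total length of $m$ pairwise disjoint weakly increasing subsequences of $v$; but because values strictly decrease from one block to the next, every weakly increasing subsequence of $v$ lies inside a single block, so this maximum is attained by selecting the $m$ largest blocks and equals $\lambda_1 + \cdots + \lambda_m$. Consequently $\pplac{v}$, and hence $T = \qplac{v}$, has shape $\lambda$. Thus $T \in \syt{\lambda}$ satisfies $\dc{T} = \alpha$, producing the summand $F_\alpha$ in Gessel's expansion of $s_\lambda$.

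The step most likely to require care is the shape computation for $T$: invoking Greene's theorem is the quickest route. If one prefers to stay inside the paper's framework, an alternative is to take $w := v$ as a word and check directly that $\phypo{w}$ has shape $\alpha$ (which is immediate, since the descent positions of $w$ are precisely the places where consecutive letters must lie on different rows of the quasi-ribbon) and that $\pplac{w}$ has shape $\lambda$ (exhibiting the insertion tableau by the same block-by-block reasoning as above), whereupon the conclusion follows from the identifications of $F_\alpha$ with the isomorphism class of $\Gamma(\hypo,\alpha)$ and of $s_\lambda$ with the isomorphism class of $\Gamma(\plac,\lambda)$ summarised in Section~\ref{sec:preliminaries}.
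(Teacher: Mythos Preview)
Your argument is correct. Constructing the permutation $v$ as a concatenation of decreasing blocks of sizes $\alpha_1,\ldots,\alpha_k$, each block internally increasing, gives descent composition $\alpha$; the standard RSK fact $\mathrm{Des}(\qlit_\plac(v)) = \mathrm{Des}(v)$ then yields $\dc{T} = \alpha$, and Greene's theorem pins down the shape as $\lambda$ since increasing subsequences are confined to single blocks. Combined with Gessel's identity this is a complete proof.

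The paper takes a genuinely different route. Rather than invoking Gessel's formula directly, it proves the stronger crystal-theoretic statement (\fullref{Theorem}{thm:schur-fund}) that the quasi-crystal component of shape $\alpha$ sits inside the crystal component of shape $\lambda$. Concretely, it starts from the highest-weight quasi-ribbon tableau of shape $\alpha$ (row $i$ filled with $i$'s), slides rows left and cells up to produce an explicit semistandard Young tableau $T$, checks via conjugate partitions that $T$ has shape $\lambda$, and then verifies that the column reading of $T$ has quasi-ribbon tableau of shape $\alpha$. Your approach is shorter and leans on classical RSK machinery (Greene's theorem, the descent-preservation property of the recording tableau) external to the paper; the paper's approach is more self-contained within its crystal framework, yields an explicit tableau without appeal to RSK or Greene, and delivers the crystal-level statement from which \fullref{Theorem}{thm:schur-reorder} follows as a character identity.
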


 Given the relationship between crystals and quasi-crystals, and the fact that their characters are, respectively,
Schur functions and fundamental quasi-symmetric functions, Theorem \ref{thm:schur-reorder} follows from the next statement about
quasi-crystals.

\begin{theorem}\label{thm:schur-fund}
  Let $\alpha$ be a composition and let $\lambda$ be the partition obtained by sorting the parts of $\alpha$ into weakly
  decreasing order. Any component of $\Gamma(\plac)$ comprising words whose tableaux have shape $\lambda$ contains a
  component of $\Gamma(\hypo)$ comprising words whose quasi-ribbon tableaux have shape $\alpha$.
\end{theorem}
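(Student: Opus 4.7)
\smallskip

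\noindent\textbf{Plan of proof.} The plan is to reduce the claim to a purely combinatorial question about standard Young tableaux, and then resolve it by an explicit construction via RSK. By the description of minimal parsings recalled in \fullref{Section}{sec:preliminaries}, for any word $u$ with $\pplac{u}=T$ the shape of $\phypo{u}$ equals the descent composition of the standardization of $T$. Hence it suffices to exhibit a standard Young tableau $T$ of shape $\lambda$ with $\dc{T}=\alpha$: then the column reading $u=\colreading{T}$ satisfies $\pplac{u}=T$, which has shape $\lambda$, and $\phypo{u}$ has shape $\alpha$, so the inclusion $\Gamma(\hypo,u)\subseteq\Gamma(\plac,u)$ places a component of $\Gamma(\hypo)$ of shape $\alpha$ inside the component of $\Gamma(\plac)$ of shape $\lambda$ containing $u$.

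\smallskip

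\noindent To build $T$, I set $n=|\alpha|$ and $s_i=\alpha_1+\cdots+\alpha_i$, and partition the positions $\{1,\ldots,n\}$ into consecutive blocks $B_1,\ldots,B_k$ with $|B_i|=\alpha_i$. I then define a permutation $\pi\in S_n$ by placing in the positions of $B_i$, in increasing order of position, the $\alpha_i$ values $n-s_i+1,\ldots,n-s_{i-1}$. Thus block $1$ receives the largest $\alpha_1$ values, block $2$ the next $\alpha_2$ largest, and so on, with each block internally strictly increasing. I take $T$ to be the recording tableau $Q(\pi)$ of $\pi$ under the RSK correspondence.

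\smallskip

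\noindent To verify that $T$ has the required properties, I observe first that $\pi$ has no descent strictly inside a block, and has a descent at every boundary $s_i$ (since each value in $B_i$ exceeds every value in $B_{i+1}$). Hence the descent set of $\pi$, and so that of $Q(\pi)$, is exactly $\{s_1,\ldots,s_{k-1}\}$, giving $\dc{T}=\alpha$. For the shape, the total-order separation between blocks forces every strictly increasing subsequence of $\pi$ to lie within a single block; consequently any $j$ pairwise disjoint strictly increasing subsequences have combined length at most the sum of the $j$ largest parts $\alpha_i$, namely $\lambda_1+\cdots+\lambda_j$, with equality attained by taking the $j$ largest blocks in full. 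Greene's theorem then yields that $Q(\pi)$ has shape $\lambda$.

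\smallskip

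\noindent The substantive step is the Greene-theorem computation, which is made transparent by the block structure of $\pi$; the remaining ingredients---the minimal-parsing reduction, the identity $\pplac{\colreading{T}}=T$, and the equality of descent sets of $\pi$ and $Q(\pi)$---are direct appeals to the results recalled earlier and to standard RSK theory.
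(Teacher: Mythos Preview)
Your proof is correct and takes a genuinely different route from the paper's.

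The paper works directly with the highest-weight quasi-ribbon tableau $Q$ of shape $\alpha$ (row $i$ filled with $i$'s), and applies a ``slide rows left, then slide cells up'' construction to produce a (non-standard) Young tableau $T$; it verifies that $T$ has shape $\lambda$ by an elementary column-length count (namely $\alpha'=\lambda'$ for conjugates), and then checks by hand that the column reading $u=\colreading{T}$ satisfies $\phypo{u}=Q$, using that $u$ begins with the prefix $\ell(\alpha)\cdots 21$ and so contains every $(i{+}1)i$-inversion. Your argument instead first reduces the statement to the existence of a standard Young tableau of shape $\lambda$ with descent composition $\alpha$, then manufactures such a tableau as the recording tableau of a block-structured permutation, appealing to Greene's theorem for the shape and to the classical RSK descent-preservation theorem for the descent composition. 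The paper's approach is fully self-contained within the preliminaries and yields an explicit, easily visualised tableau; your approach is shorter and more conceptual once Greene's theorem and the RSK descent property are taken as known, and it makes transparent the equivalent reformulation in terms of standard tableaux, which is exactly the combinatorial content of the expansion $s_\lambda=\sum_{T\in\syt{\lambda}}F_{\dc{T}}$.
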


\begin{proof}
  Let $\alpha$ and $\lambda$ be as in the statement. Let $Q$ be a highest weight quasi-ribbon tableau of shape
  $\alpha = (\alpha_1,\ldots,\alpha_{\clen{\alpha}})$. Then the $i$-th row of $Q$ comprises entries $i$, for
  $i \in \set{1,\ldots,\clen{\alpha}}$.

  Let $T$ be the array obtained from $Q$ by the following process: Slide all the rows leftwards until the leftmost entry
  of each is in column $1$. Now slide all the cells upwards along their columns until the topmost entry in each column
  is in the first row and there are no gaps in each column. The following is an example:
  \begin{align*}
    \begin{tikzpicture}[
      x=5mm,
      y=5mm,
      baseline=(firstmatrix-1-1.base),
      tableaumatrix/.append style={outer sep=0mm},
      ]
      \matrix[tableaumatrix,name=firstmatrix,anchor=north west]{
        1 \& 1                     \\
        \& 2 \& 2 \& 2             \\
        \&   \&   \& 3             \\
        \&   \&   \& 4 \& 4 \& 4 \& 4 \\
        \&   \&   \&   \&   \&   \& 5 \\
      };
    \end{tikzpicture}
    \rightsquigarrow{}&
    \begin{tikzpicture}[
      x=5mm,
      y=5mm,
      baseline=(firstmatrix-1-1.base),
      tableaumatrix/.append style={outer sep=0mm},
      ]
      \draw[gray,thick,->] (.6,-1.5) -- (0,-1.5);
      \draw[gray,thick,->] (1.8,-2.5) -- (0,-2.5);
      \draw[gray,thick,->] (1.8,-3.5) -- (0,-3.5);
      \draw[gray,thick,->] (2.4,-4.5) -- (0,-4.5);
      \matrix[tableaumatrix,name=firstmatrix,anchor=north west] at (0,0) {
        1 \& 1 \\
      };
      \matrix[tableaumatrix,anchor=north west] at (.6,-1) {
        2 \& 2 \& 2               \\
      };
      \matrix[tableaumatrix,anchor=north west] at (1.8,-2) {
        3       \\
      };
      \matrix[tableaumatrix,anchor=north west] at (1.8,-3) {
        4 \& 4 \& 4 \& 4 \\
      };
      \matrix[tableaumatrix,anchor=north west] at (2.4,-4) {
        5\\
      };
    \end{tikzpicture} \displaybreak[0]\\
    \rightsquigarrow{}&
    \begin{tikzpicture}[
      x=5mm,
      y=5mm,
      baseline=(firstmatrix-1-1.base),
      tableaumatrix/.append style={outer sep=0mm},
      ]
      \matrix[tableaumatrix,name=firstmatrix,anchor=north west] at (0,0) {
        1 \& 1 \\
        2 \& 2 \& 2 \\
        3 \\
        4 \& 4 \& 4 \& 4 \\
        5 \\
      };
    \end{tikzpicture} \displaybreak[0]\\
    \rightsquigarrow{}&
    \begin{tikzpicture}[
      x=5mm,
      y=5mm,
      baseline=(firstmatrix-1-1.base),
      tableaumatrix/.append style={outer sep=0mm},
      ]
      \draw[gray,thick,->] (1.5,-2.6) -- (1.5,-2);
      \draw[gray,thick,->] (2.5,-.6) -- (2.5,0);
      \draw[gray,thick,->] (2.5,-2.2) -- (2.5,-1.6);
      \draw[gray,thick,->] (3.5,-1.2) -- (3.5,0);
      \matrix[tableaumatrix,name=firstmatrix,anchor=north west] at (0,0) {
        1 \& 1 \\ 2 \& 2 \\ 3 \\ 4 \\ 5 \\
      };
      \matrix[tableaumatrix,anchor=north west] at (2,-.6) {
        2 \\
      };
      \matrix[tableaumatrix,anchor=north west] at (1,-2.6) {
        4 \\
      };
      \matrix[tableaumatrix,anchor=north west] at (2,-2.2) {
        4 \\
      };
      \matrix[tableaumatrix,anchor=north west] at (3,-1.2) {
        4 \\
      };
    \end{tikzpicture} \displaybreak[0]\\
    \rightsquigarrow{}&
    \begin{tikzpicture}[
      x=5mm,
      y=5mm,
      baseline=(firstmatrix-1-1.base),
      tableaumatrix/.append style={outer sep=0mm},
      ]
      \matrix[tableaumatrix,name=firstmatrix,anchor=north west] at (0,0) {
        1 \& 1 \& 2 \& 4 \\
        2 \& 2 \& 4 \\
        3 \& 4 \\
        4 \\
        5 \\
      };
    \end{tikzpicture}
  \end{align*}

  The result of sliding each row leftwards is an array $S$ where the $i$-th row has length $\alpha_i$ and contains only
  symbols $i$. Thus, although the columns of $S$ contain gaps, the entries in each column are strictly increasing from
  top to bottom. Therefore after sliding cells upwards to obtain $T$, the entries in each column are still strictly
  increasing from top to bottom. Further, if a column of $S$ has a gap in a given row, then so do all columns further to
  the right. Hence if an entry in $T$ comes from a given row of $S$, then each entry to its right in $T$ comes from the
  same row or a lower row of $S$. Since the $i$-th row of $S$ comprises only symbols $i$, it follows that each row of
  $T$ is weakly decreasing from left to right. Hence $T$ is actually a Young tableau.

  For any composition $\beta = (\beta_1,\ldots,\beta_{\clen\beta})$, define
  $\beta' = (\beta'_1,\ldots,\beta'_{\max(\beta)})$ by letting each $\beta'_j$ be the number of parts of $\beta$ that
  are greater than or equal to $j$. In the special case where $\beta$ is a partition, $\beta'$ is the conjugate
  partition and describes the number of cells in the columns of a Young tableau with shape $\beta$
  \cite[\S~1.3]{andrews_partitions}. It is immediate from the definition that $\beta'$ is invariant under the reordering
  of the parts of $\beta$. So, since the partition $\lambda$ is obtained by reordering the parts of $\alpha$ into weakly
  decreasing order, $\alpha' = \lambda'$. Further, each $\alpha_j$ is also the number of entries in the column $j$ of
  the array $S$, and the number of cells in each column does not change when cells are slid upwards. So $\alpha'$ is the
  conjugate of the shape of $T$. Hence, since partition conjugation is a bijection, $T$ has shape $\lambda$.

  Let $u$ be the column reading of $T$. Notice that, for $i \in \set{1,\ldots,\plen{\alpha}}$, the $i$-th row of $Q$
  comprises symbols $i$, and the $(i,1)$-th entry of $T$ is a symbol $i$. Thus $u$ has as a prefix
  $\plen{\alpha}\cdots 21$ and so contains an $(i+1)i$ inversion for all $i \in \set{1,\ldots,\plen{\alpha}-1}$.
  Furthermore, $Q$ and $T$, and so $u$, have the same evaluation. Hence $\phypo{u} = Q$.

  Thus the connected component $\Gamma(\hypo,u)$, which comprises words whose quasi-ribbon tableaux have shape $\alpha$,
  is contained in the connected component $\Gamma(\plac,u)$, which comprises words whose Young tableau have shape
  $\lambda$.
\end{proof}

{\color{black}
\begin{corollary}
Let $Q$ be a quasi-ribbon tableau and let $\xi (Q)$ denote the Young tableau obtained from $Q$ by applying the previous algorithm. Then, if $\f_i (Q)$ is defined, for some $i \in I$,
$$\xi (\f_i (Q)) = \kf_i (\xi (Q)).$$
\end{corollary}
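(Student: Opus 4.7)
The plan is to track the rightmost $i$ of $Q$ as it is transported by $\xi$ into $\xi(Q)$, and to show that both $\xi \circ \f_i$ and $\kf_i \circ \xi$ amount to toggling the entry at that single cell from $i$ to $i+1$. The central observation is that the sliding algorithm $\xi$ depends only on the shape of the quasi-ribbon tableau, not on the numerical values of the entries: sliding rows leftwards and then sliding cells upwards along columns is a purely geometric operation on the underlying filled cells. Since $\f_i$ preserves the shape of $Q$ (it merely toggles a single entry $i \mapsto i+1$), $\xi(\f_i(Q))$ coincides with $\xi(Q)$ in every cell except the one that receives the rightmost $i$ of $Q$.

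Next, I would pin down that cell explicitly. By the quasi-ribbon structure all $i$'s of $Q$ lie in a single row $h$ and occupy a consecutive block of cells there; and since $\f_i$ is defined on $Q$, the column reading of $Q$ contains no $(i{+}1)i$ subsequence, so any $i+1$'s of $Q$ also lie in row $h$, immediately to the right of the $i$'s. Writing the columns of row $h$ after left-sliding as $1, \ldots, \alpha_h$, the $i$'s occupy columns $p, \ldots, q$ and the $i+1$'s (if any) occupy columns $q+1, \ldots, r$. After the subsequent upward slide, the entry originally at row $h$, column $j$ ends up at row $r_j := \lvert\{h' \leq h : \alpha_{h'} \geq j\}\rvert$ of column $j$. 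Thus in $\xi(Q)$ the $i$'s occupy cells $(r_p, p), \ldots, (r_q, q)$ and the $i+1$'s occupy cells $(r_{q+1}, q+1), \ldots, (r_r, r)$; in particular the rightmost $i$ of $Q$ ends up at $(r_q, q)$.

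Finally I would verify via the signature rule that $\kf_i$ acts on $\xi(Q)$ at precisely the cell $(r_q, q)$. Reading $\xi(Q)$ column by column from left to right, each column bottom-to-top, one sees that each column $j \in \{p, \ldots, q\}$ contributes one $+$ (from its unique $i$-entry at row $r_j$: there is no $i+1$ below it in that column, since $i+1$'s occur only in columns $q+1, \ldots, r$, and no $i$ above it for the same reason); analogously each column $j \in \{q+1, \ldots, r\}$ contributes one $-$; and every other column contributes nothing. The full signature is therefore $\underbrace{+\cdots+}_{q-p+1}\underbrace{-\cdots-}_{r-q}$, which contains no $-+$ factor and is already in reduced form. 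Its rightmost $+$ comes from column $q$ and corresponds to the entry at $(r_q, q)$, so $\kf_i$ replaces that very entry by $i+1$.

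The main obstacle is the signature analysis: one has to verify that every $i$ and every $i+1$ in $\xi(Q)$ really does lie in the predicted staircase of cells, so that the $+$'s and $-$'s line up into two clean consecutive blocks with no $-+$ factor to cancel. Once that structural picture is secured, the equality $\xi(\f_i(Q)) = \kf_i(\xi(Q))$ is immediate from the two preceding paragraphs.
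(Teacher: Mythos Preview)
Your proof is correct and follows essentially the same approach as the paper's. Both arguments hinge on the observation that when $\f_i$ is defined on $Q$, all symbols $i$ and $i+1$ lie in a single row of $Q$, and then track how these symbols are transported by $\xi$ into distinct columns of $\xi(Q)$ so that the $\{i,i+1\}$-subword of the column reading is $i^a(i+1)^b$ with no cancellations. You carry this out by explicitly tracking cells and running the signature rule column by column, whereas the paper phrases it more tersely as ``$\xi$ does not alter this word''; your version is more detailed but the underlying idea is the same.
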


\begin{proof}
Suppose that $\f_i (Q)$ is defined. Thus, $Q$ has a symbol $i$ and its column reading word has no $(i+1)i$ inversions. Equivalently, $Q$ has a symbol $i$ and the symbols $i+1$ that might occur in $Q$ must appear on the same row as $i$.

Therefore, the row reading word of $Q$ comprising the letters $i$ and $i+1$ is of the form $i^a (i+1)^b$, for $a > 0, b \geq 0$. Then, $\f_i$ changes the rightmost $i$ to $i+1$ and thus, the row reading word consisting of the letters $i$ and $i+1$ becomes $i^{a-1} (i+1)^{b+1}$. By construction, $\xi$ does not alter this word.

Now suppose that $\xi$ is first applied to $Q$. Thus, as before, the row reading word consisting of the letters $i$ and $i+1$ is not altered. Then, applying $\kf_i$ results in changing the rightmost $i$ in the row reading word of $\xi (Q)$ into $i+1$, becoming $i^{a-1} (i+1)^{b+1}$.

Therefore, $\xi (\f_i (Q))$ and $\kf_i (\xi (Q))$ have the same shape and the same row reading word, consisting of letters $i$ and $i+1$. Since the remaining letters were not changed during this process, we have $\xi (\f_i (Q)) = \kf_i (\xi (Q))$.
\end{proof}
}

\section{Skeletal structure}\label{sec:skeleton}

In \cite{maasgariepy_quasicrystal}, Maas-Gariépy introduced the notion of skeleton of a connected crystal, that we recall here.
Let $\lambda$ be a partition and let $n \in \nset$ be at least equal to the maximum length of a descent composition of a
standard Young tableau of shape $\lambda$. Take a connected component of $\Gamma(\plac_n)$ whose associated Young
tableaux have shape $\lambda$. Replace each quasi-crystal component it contains with the associated standard Young
tableau, keeping one oriented edge with minimal label of all those that joined each pair of adjacent quasi-crystal
components. The result is the \defterm{skeleton} of the original connected component of $\Gamma(\plac_n)$. It is
independent of the choice of $n$ \cite[Theorem~5]{maasgariepy_quasicrystal}. Since all connected components of
$\Gamma(\plac_n)$ whose associated Young tableaux have shape $\lambda$ are isomorphic, the skeleton is only dependent on
$\lambda$ and so is denoted $\Skel(\lambda)$.

Maas-Gariépy presented several conjectures about the skeleton of a connected crystal. In \cite[Conjecture 5.3]{maasgariepy_quasicrystal}, it is conjectured that the dual equivalence graph of a partition $\lambda$ is a subgraph of $\Skel(\lambda)$. This was recently proved by Brauner, Corteel, Daugherty and Schilling \cite[Theorem 4.1]{BCDS25}, although the definition of skeleton graph considered here differs slightly on the labelling of the edges.

Another conjecture concerns the structure of the induced subgraph $H_s$ of $\Skel(\lambda)$, whose
vertices are the standard Young tableaux with descent compositions having exactly $s$ parts. The author gave some conjectures based
on computations when $\lambda$ is a partition of a number $n \leq 6$: that each $H_s$ is a disjoint union of
singletons; a disjoint union of chains; or a union of cycles possibly with an extra source and sink vertex
attached \cite[Conjecture~4.10]{maasgariepy_quasicrystal}. However, these conjectures largely fail for the following example, where $\lambda$ is a partition of $7$.

\begin{example}\label{ex:counter-example}
  Let $\lambda = (3,2,2)$. There are $21$ standard Young tableaux of this shape, and the maximum length of any of their
  descent compositions is $5$. The skeleton $\Skel(\lambda)$ is as shown in \fullref{Figure}{fig:skel-322}.
  \begin{itemize}
    \item The $3$ tableaux at the top have descent compositions with $3$ parts, and the subgraph $H_3$ they induce is a
          chain.
    \item The $6$ tableaux at the bottom have descent compositions with $5$ parts, and the subgraph $H_5$ they induce is
          a union of a chain and a singleton.
    \item The remaining $12$ tableaux have descent compositions with $4$ parts, and the subgraph $H_4$ they induce
          contains two cycles of even length and four additional vertices.
  \end{itemize}
  The subgraphs $H_4$ and $H_5$ suffice to show that the conjecture does not hold.

  \begin{figure}[t]
    \begin{tikzpicture}[
          bigcrystal,
          labelledcolouredcrystaledges
        ]

      \node (6417523) at (-1,4) {\smalltableau{1 \& 2 \& 3 \\ 4 \& 5 \\ 6 \& 7 \\}};
      \node (6317425) at (0,4) {\smalltableau{1 \& 2 \& 5 \\ 3 \& 4 \\ 6 \& 7 \\}};
      \node (5316427) at (1,4) {\smalltableau{1 \& 2 \& 7 \\ 3 \& 4 \\ 5 \& 6 \\}};

      \node (5317426) at (2,2) {\smalltableau{1 \& 2 \& 6 \\ 3 \& 4 \\ 5 \& 7 \\}};
      \node (4317526) at (2,1) {\smalltableau{1 \& 2 \& 6 \\ 3 \& 5 \\ 4 \& 7 \\}};
      \node (4316527) at (3,0) {\smalltableau{1 \& 2 \& 7 \\ 3 \& 5 \\ 4 \& 6 \\}};
      \node (4217536) at (1,0) {\smalltableau{1 \& 3 \& 6 \\ 2 \& 5 \\ 4 \& 7 \\}};
      \node (4216537) at (2,-1) {\smalltableau{1 \& 3 \& 7 \\ 2 \& 5 \\ 4 \& 6 \\}};
      \node (5216437) at (2,-2) {\smalltableau{1 \& 3 \& 7 \\ 2 \& 4 \\ 5 \& 6 \\}};

      \node (5417623) at (-2,2) {\smalltableau{1 \& 2 \& 3 \\ 4 \& 6 \\ 5 \& 7 \\}};
      \node (5317624) at (-2,1) {\smalltableau{1 \& 2 \& 4 \\ 3 \& 6 \\ 5 \& 7 \\}};
      \node (5217634) at (-1,0) {\smalltableau{1 \& 3 \& 4 \\ 2 \& 6 \\ 5 \& 7 \\}};
      \node (6317524) at (-3,0) {\smalltableau{1 \& 2 \& 4 \\ 3 \& 5 \\ 6 \& 7 \\}};
      \node (6217534) at (-2,-1) {\smalltableau{1 \& 3 \& 4 \\ 2 \& 5 \\ 6 \& 7 \\}};
      \node (6217435) at (-2,-2) {\smalltableau{1 \& 3 \& 5 \\ 2 \& 4 \\ 6 \& 7 \\}};

      \node (3216547) at (2,-4) {\smalltableau{1 \& 4 \& 7 \\ 2 \& 5 \\ 3 \& 6 \\}};
      \node (3217546) at (1,-4) {\smalltableau{1 \& 4 \& 6 \\ 2 \& 5 \\ 3 \& 7 \\}};
      \node (3217645) at (0,-4) {\smalltableau{1 \& 4 \& 5 \\ 2 \& 6 \\ 3 \& 7 \\}};
      \node (4217635) at (-1,-4) {\smalltableau{1 \& 3 \& 5 \\ 2 \& 6 \\ 4 \& 7 \\}};
      \node (4317625) at (-2,-4) {\smalltableau{1 \& 2 \& 5 \\ 3 \& 6 \\ 4 \& 7 \\}};

      \node (5217436) at (0,-3) {\smalltableau{1 \& 3 \& 6 \\ 2 \& 4 \\ 5 \& 7 \\}};

      \draw[f1] (6417523) to (6317425);
      \draw[f2] (6317425) to (5316427);

      \draw[f2] (5317426) to (4317526);
      \draw[f3] (4317526) to (4316527);
      \draw[f1] (4317526) to (4217536);
      \draw[f1] (4316527) to (4216537);
      \draw[f3] (4217536) to (4216537);
      \draw[f3] (4216537) to (5216437);
      \draw[f1] (5417623) to (5317624);
      \draw[f1] (5317624) to (5217634);
      \draw[f3] (5317624) to (6317524);
      \draw[f3] (5217634) to (6217534);
      \draw[f1] (6317524) to (6217534);
      \draw[f2] (6217534) to (6217435);
      \draw[f2] (5217634) to (4217536);

      \draw[f1] (4317625) to (4217635);
      \draw[f2] (4217635) to (3217645);
      \draw[f3] (3217645) to (3217546);
      \draw[f4] (3217546) to (3216547);

      \draw[f2,bend right=10,swap] (6417523) to (5417623);
      \draw[f3,bend right=10,swap] (5417623) to (6417523);
      \draw[f1,swap] (6417523) to[out=170,in=100] (6317524);
      \draw[f3,swap] (6317524) to[out=90,in=180] (6417523);

      \draw[f1] (6317425) to[out=260,in=0,pos=.25] (6217435);
      \draw[f3] (6217435) to[out=10,in=250,pos=.75] (6317425);
      \draw[f2] (6317425) to[out=-80,in=170,pos=.8] (5317426);
      \draw[f4] (5317426) to[out=180,in=-90,pos=.2] (6317425);

      \draw[f2] (5316427) to[out=10,in=80] (4316527);
      \draw[f4] (4316527) to[out=90,in=0] (5316427);
      \draw[f1] (5316427) to[out=260,in=170,pos=.25] (5216437);
      \draw[f3] (5216437) to[out=180,in=250,pos=.75] (5316427);

      \draw[f1] (5217436) to[out=100,in=220,pos=.75] (5317426);
      \draw[f2] (5317426) to[out=230,in=90,pos=.25] (5217436);
      \draw[f3,bend right=10,swap,pos=.25] (5217436) to (6217435);
      \draw[f4,bend right=10,swap,pos=.75] (6217435) to (5217436);

      \draw[f2,swap] (4317625) to[out=150,in=190] (5317624);
      \draw[f3,swap] (5317624) to[out=180,in=160] (4317625);

      \draw[f2,bend right=5,swap,pos=.15] (4217635) to (5217634);
      \draw[f3,bend right=5,swap,pos=.85] (5217634) to (4217635);

      \draw[f2,bend right=5,swap,pos=.15] (3217546) to (4217536);
      \draw[f3,bend right=5,swap,pos=.85] (4217536) to (3217546);

      \draw[f2] (3216547) to[out=30,in=-10] (4216537);
      \draw[f3] (4216537) to[out=0,in=20] (3216547);

    \end{tikzpicture}
    \caption{$\Skel(\lambda)$, where $\lambda = (3,2,2)$.}
    \label{fig:skel-322}
  \end{figure}
\end{example}

One part of the conjecture is true:

\begin{proposition}\label{prop:even-len}
  Let $\lambda$ be a partition and let $H_s$ be the subgraph of $\Skel(\lambda)$ induced by the standard Young tableaux
  whose descent compositions have exactly $s$ parts. Then any cycle in $H_s$ has even length.
\end{proposition}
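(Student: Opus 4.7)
The plan is to prove that $H_s$ is bipartite by exhibiting a $2$-coloring by the parity of the major index. For $T\in H_s$ with $\dc{T}=(\alpha_1,\ldots,\alpha_s)$, set $\mathrm{maj}(T)=\sum_{j=1}^{s-1}(\alpha_1+\cdots+\alpha_j)$; I will show that every edge of $H_s$ connects tableaux whose major indices have opposite parities, whence bipartiteness (and hence evenness of every cycle) follows.

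Fix an edge of $H_s$ from $T$ to $T'$. By the construction of the skeleton there are a word $u$ with $\std{\pplac{u}}=T$ and a strict Kashiwara operator $\kf_i$ with $\std{\pplac{\kf_i(u)}}=T'$; writing $\sigma=\dc{T}$ and $\tau=\dc{T'}$, these are the shapes of $\phypo{u}$ and $\phypo{\kf_i(u)}$, both with $s$ parts. The heart of the argument is the following \emph{key lemma}: if $r$ denotes the row of $\phypo{u}$ containing $i$, then $\tau_k=\sigma_k$ for $k\notin\{r,r+1\}$, while $\tau_r=\sigma_r-1$ and $\tau_{r+1}=\sigma_{r+1}+1$. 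Assuming this, only the $r$-th partial sum $\alpha_1+\cdots+\alpha_k$ changes between $\sigma$ and $\tau$ (decreasing by $1$), so $\mathrm{maj}(T')=\mathrm{maj}(T)-1$ and the parities differ.

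To prove the key lemma I would compare the subsequences $(j{+}1)\,j$ in $u$ and in $\kf_i(u)$, which by the Krob--Thibon insertion determine which consecutive distinct values lie in the same row. Only $j\in\{i-1,i,i+1\}$ can contribute a change, since $\kf_i$ alters only the multiplicities of $i$ and $i+1$. Analysing the reduction $\rho_i(u)\to{+}^{\kfcount_i(u)}{-}^{\kecount_i(u)}$ shows that strictness together with definedness forces $|u|_i\geq 2$ and the existence of at least one cancelled $-+$ pair in $\rho_i(u)$; both endpoints of that pair survive unchanged in $\kf_i(u)$ (since $\kf_i$ modifies only the rightmost uncancelled $+$), so the inversion $(i{+}1)\,i$ is preserved. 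A multiplicity count shows that $(i)(i-1)$ can only be lost and $(i+2)(i+1)$ can only be gained; since the row count is preserved by hypothesis, the net change in the number of row breaks is zero, so either neither of these two inversions changes, or both do.

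The main obstacle is ruling out the second possibility. Here is the plan. If $(i+2)(i+1)$ is gained, then the new $i+1$ at the position $p^*$ where $\kf_i$ acts follows some $i+2$; combined with the absence of $(i+2)(i+1)$ in $u$, this forces every $i+1$ of $u$ to precede $p^*$, so every $-$ of $\rho_i(u)$ precedes $p^*$. Under that condition, if some $i$ of $u$ occurred at a position $p>p^*$, then in the matching "each $-$ takes the nearest subsequent unmatched $+$" the $-$ paired with $p$ would have had $p^*$ as a nearer unmatched candidate, forcing it to choose $p^*$ instead and contradicting that $p^*$ is unmatched; hence $p^*$ is the rightmost occurrence of $i$ in $u$. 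On the other hand, $(i)(i-1)$ being lost requires $p^*$ to lie before some $i-1$ while every other $i$ lies after every $i-1$, which is incompatible with $p^*$ being the rightmost $i$. This positional contradiction is the delicate step; once it is in place, the row structure is preserved and only the multiplicities of $i$ and $i+1$ change, yielding exactly the cell shift described in the key lemma.
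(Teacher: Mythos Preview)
Your proof is correct and follows the same high-level strategy as the paper: show that $H_s$ is bipartite by exhibiting a parity that flips across every edge. The paper uses the parity of $\sum_j \alpha_{2j}$ (the sum of the even-indexed parts of the descent composition), while you use the parity of the major index; both work because an edge of $H_s$ changes the descent composition by moving one cell between two adjacent rows, and either statistic changes by $\pm 1$ under such a move.

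The substantive difference is that the paper simply \emph{cites} this cell-shift fact from the proof of \cite[Theorem~5]{maasgariepy_quasicrystal}, whereas you re-derive it (your ``key lemma'') by analysing how a strict $\kf_i$ affects the three inversions $(i{+}1)i$, $i(i{-}1)$, $(i{+}2)(i{+}1)$. Your argument is sound: the non-crossing nature of the bracket matching does force $p^*$ to be the rightmost $i$ once all $-$'s precede $p^*$ (any later $+$ would be matched to a $-$ lying before $p^*$, and a matched pair cannot straddle an unmatched symbol), and the positional contradiction then goes through because $|u|_i\geq 2$ guarantees at least one other $i$. One small inaccuracy to fix: if $i-1$ or $i+2$ does not occur in $u$, the row-breaks adjacent to $i$ and $i+1$ are governed by inversions between $i$ and the next \emph{smaller} symbol present, and between $i+1$ and the next \emph{larger} symbol present, not literally $i-1$ and $i+2$. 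Your argument carries over verbatim with these replacements, since it only uses the relative positions of these symbols with respect to the $i$'s and $i+1$'s, never their actual values. The paper's route is shorter by outsourcing the lemma; yours is self-contained and in fact establishes a slightly sharper statement (it pins down the direction of the cell shift).
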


\begin{proof}
  By the proof of \cite[Theorem~5]{maasgariepy_quasicrystal}, if $T$ and $T'$ are Young tableaux with descent
  compositions $\alpha$ and $\alpha'$ with the same number of parts, and $\f_i(T) = T'$, then $\alpha'$ can be obtained
  from $\alpha$ by decrementing one part by $1$ and incrementing an adjacent part by $1$. In terms of quasi-ribbon
  diagrams, this corresponds to moving a cell from the $i$-th row to the $(i+1)$-th or vice versa.

  Define the \defterm{parity} of a Young tableaux with descent composition
  $\alpha = \parens{\alpha_1,\ldots,\alpha_{\plen{\alpha}}}$ to be the partity of
  $\sum_{j=1}^{\floor{\plen{\alpha}/2}} \alpha_{2j}$; that is, the parity of the sum of the even-indexed parts of
  $\alpha$. By the previous paragraph, any two Young tableaux that are adjacent in $H_s$ will have opposite parities.
  Hence any path from a vertex of $H_s$ back to that vertex must have even length.
\end{proof}

\section{Acknowledgments}
The authors are thankful to the anonymous referees for their careful reading of the paper and several helpful comments.

\bibliographystyle{alpha}

\bibliography{cmrr_quasicrystals_2025-05-30.bib}

\end{document}